\numberwithin{equation}{section}
\newtheorem{theorem}{Theorem}[section]
\newtheorem{proposition}[theorem]{Proposition}
\newtheorem{lemma}[theorem]{Lemma}
\theoremstyle{definition}
\newtheorem{definition}[theorem]{Definition}
\theoremstyle{remark}
\newtheorem{remark}[theorem]{Remark}
\newtheorem{example}[theorem]{Example}
\begin{document}
\title{Cohn path algebras of higher-rank graphs}
\author{Lisa Orloff Clark and Yosafat E. P. Pangalela}
\address{Lisa Orloff Clark and Yosafat E. P. Pangalela\\
Department of Mathematics and Statistics\\
University of Otago\\
PO Box 56\\
Dunedin 9054\\
New Zealand}
\email{lclark@maths.otago.ac.nz, yosafat.pangalela@maths.otago.ac.nz}
\subjclass{16S99 (Primary); 16S10 (Secondary)}
\keywords{Cohn path algebra, Kumjian-Pask algebra, finitely aligned $k$%
-graph, Steinberg algebra}

\begin{abstract}
In this article, we introduce Cohn path algebras of higher-rank graphs. We
prove that for a higher-rank graph $\Lambda $, there exists a higher-rank
graph $T\Lambda $ such that the Cohn path algebra of $\Lambda $ is
isomorphic to the Kumjian-Pask algebra of $T\Lambda $. We then use this
isomorphism and properties of Kumjian-Pask algebras to study Cohn path
algebras. This includes proving a uniqueness theorem for Cohn path algebras.
\end{abstract}

\thanks{This research was done as part of the second author's PhD thesis at
the University of Otago under the supervision of the first author and Iain
Raeburn. Thank you to Iain for his guidance. This research was also
supported by Marsden grant 15-UOO-071 from the Royal Society of New Zealand.}
\maketitle

\section{Introduction}

Leavitt path algebras were introduced and studied in \cite{AA05} and \cite%
{AMP07} as a generalisation the class of algebras studied by Leavitt in \cite%
{L62}. Leavitt path algebras are also the natural algebraic analogues of
graph $C^{\ast }$-algebras studied in \cite{CBMS}; a number of interesting
result have been proven by translating between graph $C^{\ast }$-algebras
and the ring theoretic Leavitt path algebras.

Cohn path algebras were introduced in \cite{AM12,AG11} and generalise the
algebras $U_{1,n}$ studied by Cohn in \cite{C55}. The idea is to build an
algebra out of the path space of a graph; addition and scalar multiplication
are defined formally and multiplication of two paths is only nonzero when
one can concatenate the paths. Cohn path algebras can also be obtained from
Leavitt path algebras by omitting one of the \emph{Cuntz-Krieger relations}.
Hence every Leavitt path algebra can be viewed as a quotient of a Cohn path
algebra. On the other hand, Abrams, Ara and Siles Molina show in \cite%
{Leavitt path algebras} that for every graph $E$, there exists a graph $TE$
(denoted $E\left( X\right) $ in \cite{Leavitt path algebras}) such that the
Cohn path algebra of $E$ is isomorphic to the Leavitt path algebra of $TE$.

Although it has received less attention, the Cohn path algebra of a directed
graph is the algebraic analogue of the $C^{\ast}$-algebraic \emph{Toeplitz
algebra} of $E$ as defined in \cite{FR99}. In the $C^{\ast }$-algebraic
setting, Muhly and Tomforde in \cite{MT04} and also Sims in \cite{Si10} each
show that for a graph $E$, the Toeplitz algebra of $E$ is isomorphic to the
graph $C^{\ast }$-algebra of $TE$.
% (The graph $TE$ is denoted $E_{V}$ in \cite{MT04} and $\widetilde{E}$ in \cite{Si10}.)

Now we move into the setting of higher-rank graph algebras: In \cite{KP00},
Kumjian and Pask introduced a combinatorial model, called a higher-rank
graph, in order to capture the essential features of the $C^{\ast }$%
-algebras studied by Robertson and Steger in \cite{RS02}. A higher-rank
graph, also called a $k$-graph, is a generalisation of the path category of
a directed graph where the length of a `path' $\lambda$ in a $k$-graph is an
element of $\mathbb{N}^k$. Kumjian and Pask studied the $C^{\ast }$-algebras
associated to \emph{row-finite} higher-rank graphs with \emph{no sources}.
Raeburn, Sims and Yeend generalised Kumjian and Pask's construction by
describing the class of $C^{\ast }$-algebras associated to more general
higher-rank graphs in \cite{RSY03,RSY04}.

A few years ago, a higher-rank analogue of Leavitt path algebras, called
Kumjian-Pask algebras, was introduced in \cite{ACaHR13}. The class of
Kumjian-Pask algebras includes the class of Leavitt path algebras. In \cite%
{ACaHR13} the authors limit their focus to row-finite higher-rank graphs
with no sources. Following the generalisation pattern of higher-rank graph $%
C^{\ast } $-algebras, Kumjian-Pask algebras associated to more general
higher-rank graphs are described in \cite{CFaH14} and \cite{CP15}.

$C^*$-algebraic Toeplitz algebras of higher-rank graphs were introduced in
\cite{RS05}. Thus it seems natural to ask whether there is also an algebraic
analogue of these $C^{\ast}$-algebras. In this paper, we introduce Cohn path
algebras of higher-rank graphs. Our motivation comes from a desire to one
day establish an algebraic version of `KMS\ states' for higher-rank graph
algebras (see \cite{aHKR15,aHLRS14,aHLRS15}).

Our strategy is to follow the analysis of \cite{P15}. In that paper,
Pangalela shows that for every row-finite higher-rank graph $\Lambda $,
there exists a higher-rank graph $T\Lambda $ such that the Toeplitz algebra
of $\Lambda $ is isomorphic to the $C^{\ast }$-algebra of $T\Lambda $.
Although we will start with a row-finite $k$-graph $\Lambda $ with no
sources, the $k$-graph $T\Lambda $ always has sources and is not `locally
convex' so we will need to use the Kumjian-Pask algebra construction given
in \cite{CP15}.

Let $\Lambda $ be a row-finite higher-rank graph with no sources and $R$ be
a commutative ring with 1. After providing some preliminaries, in Section %
\ref{Section_CPfamily}, we define a Cohn $\Lambda $-family (\ref%
{CP-family}) and show there exists a universal Cohn path algebra $%
{\normalsize \operatorname{C}}_{R}\left( \Lambda \right) $ (Proposition \ref%
{universal-CP-family}).

In Section \ref{Section-the-uniqueness-theorem-of-CP-family}, we recall the
Kumjian-Pask algebras of \cite{CP15} and the higher-rank graph $T\Lambda $
of \cite{P15}. We also study properties of the Kumjian-Pask algebra of $%
T\Lambda $ (Proposition \ref{KP-TLambda-when-lambda-has-no-sources}) and
show that the Cohn path algebra of $\Lambda $ is isomorphic to the
Kumjian-Pask algebra of $T\Lambda $ (Theorem \ref{CP-is-isomorphic-to-KP}).
This isomorphism is an algebraic version of \cite[Theorem 4.1]{P15}. We then
show that every Cohn path algebra is $\mathbb{Z}^{k}$-graded (see \cite[%
Theorem 3.6]{CP15}). At the end of the section, we use the Cuntz-Krieger
uniqueness theorem for Kumjian-Pask algebras in \cite[Theorem 8.1]{CP15} to
prove the uniqueness theorem for Cohn path algebras (Theorem \ref%
{the-uniqueness-theorem-of-CP-family}).

Our uniqueness theorem for Cohn path algebras is notable for two reasons.
The first is that it gives a uniqueness theorem for Cohn path algebras
associated to directed graphs. Although in \cite{Leavitt path algebras},
Abrams, Ara and Siles Molina prove that every Cohn path algebra is a Leavitt
path algebra and state the Cuntz-Krieger uniqueness theorem for Leavitt path
algebras, they do not explicitly investigate uniqueness theorems for Cohn
path algebras.

Secondly, we can view the uniqueness theorem for Cohn path algebras as an
algebraic analogue of the uniqueness theorem for Toeplitz algebras given in
\cite{RS05}; the proof of our algebraic uniqueness theorem is considerably
shorter than the one Toeplitz algebras in \cite{RS05}. By translating our
proof into the $C^{\ast }$-algebra setting, we provide an alternative proof
of the uniqueness theorem for Toeplitz algebras (see \cite[Remark 4.3]{P15}).

Finally, we discuss examples and applications in Section \ref%
{Section-examples-and-applications}. First we explicitly demonstrate the
relationship between Cohn path algebras and Toeplitz algebras (Proposition %
\ref{CP-is-dense-in-TC}). We also show that our Cohn algebras can be
realised as Steinberg algebras (Proposition \ref%
{CP-is-isomorphic-to-Steinberg-algebras}).

\section{Preliminaries}

Let $k$ be a positive integer. We regard $\mathbb{N}^{k}$ as an additive
semigroup with identity $0$. For $n\in \mathbb{N}^{k}$, we write $n=\left(
n_{1},\ldots ,n_{k}\right) $. Meanwhile, for $m,n\in \mathbb{N}^{k}$, we
write $m\leq n$ to denote $m_{i}\leq n_{i}$ for $1\leq i\leq k$, and we use
expression $m\vee n$ for their coordinate-wise maximum and $m\wedge n$ for
their coordinate-wise minimum. We also write $e_{i}$ for the usual basis
elements in $\mathbb{N}^{k}$.

\subsection{Higher-rank graphs.}

A \emph{higher-rank graph} or $k$\emph{-graph} $\Lambda =\left( \Lambda
^{0},\Lambda ,r,s\right) $ is a countable small category $\Lambda $ with a
functor $d$ from $\Lambda $ to $\mathbb{N}^{k}$, called the \emph{degree map}%
, which satisfies the \emph{factorisation property}: for every $\lambda \in
\Lambda $ and $m,n\in \mathbb{N}^{k}$ with $d\left( \lambda \right) =m+n$,
there exist unique elements $\mu ,\nu \in \Lambda $ such that $\lambda =\mu
\upsilon $ and $d\left( \mu \right) =m$, $d\left( \nu \right) =n$. We then
write $\lambda \left( 0,m\right) $ for $\mu $ and $\lambda \left(
m,m+n\right) $ for $\nu $. Note that $\lambda \mu $ denotes the composition
of paths $\lambda $ and $\mu $ with $s\left( \lambda \right) =r\left( \mu
\right) $. We call the elements of $\Lambda$ \emph{paths} and elements of $%
\Lambda^0$ \emph{vertices}.

For $k=1$, we use notation $E=\left( E^{0},E^{\ast },r,s\right) $ to denote
a $1$-graph. In this case, $E^{\ast }$ contains all paths in $E$ with degree
at least $1$. We also write $E^{1}$ for the set of all paths with degree $1$%
. Since we view $E$ as a category, we use different convention from that of
Leavitt path algebra and Cohn path algebra literature where people write $%
\lambda \mu $ to denote the composition of paths $\lambda $ and $\mu $ with $%
s\left( \mu \right) =r\left( \lambda \right) $.

One way to visualise $k$-graphs is to use coloured directed graphs, as
described in \cite{HRSW13}. Suppose $\Lambda $ is a $k$-graph. The coloured
graph associated to $\Lambda$ is a directed graph whose edges are colour
coded: Choose $k$-different colours $c_{1},\ldots ,c_{k}$. The vertices in
the coloured graph are the same as the vertices of $\Lambda$. Each path $%
\lambda$ in $\Lambda$ with degree $e_{i}$ corresponds to an edge of colour $%
c_{i}$ between $s(\lambda)$ and $r(\lambda)$. We call this coloured graph
the \emph{skeleton} of $\Lambda$.

\begin{example}[{\protect\cite[Example 2.2.(ii)]{RSY03}}]
Let $k\in \mathbb{N}$ and $n\in \left( \mathbb{N\cup }\left\{ \infty
\right\} \right) ^{k}$. We define%
\begin{equation*}
\Omega _{k,n}:=\left\{ \left( p,q\right) \in \mathbb{N}^{k}\times \mathbb{N}%
^{k}:p\leq q\leq n\right\} \text{.}
\end{equation*}%
This is a category with objects $\left\{ p\in \mathbb{N}^{k}:p\leq n\right\}
$, range map $r\left( p,q\right) =p$, source map $s\left( p,q\right) =q$,
and degree map $d\left( p,q\right) =q-p$. Then $\Omega _{k,n}$ is a $k$%
-graph. The skeleton of $\Omega _{2,\left( 1,2\right) }$ is%
\begin{equation*}
\begin{tikzpicture} \node[inner sep=1pt] (v_1) at (0,0) {$\bullet$};
\node[inner sep=1pt] at (-0.3,-0.3) {$(0,0)$}; \node[inner sep=1pt] (v_2) at
(0,3) {$\bullet$}; \node[inner sep=1pt] at (-0.3,3.3) {$(1,0)$}; \node[inner
sep=1pt] (v_3) at (3,0) {$\bullet$}; \node[inner sep=1pt] at (3,-0.3)
{$(0,1)$}; \node[inner sep=1pt] (v_4) at (3,3) {$\bullet$}; \node[inner
sep=1pt] at (3,3.3) {$(1,1)$}; \node[inner sep=1pt] (v_5) at (6,0)
{$\bullet$}; \node[inner sep=1pt] at (6.3,-0.3) {$(0,2)$}; \node[inner
sep=1pt] (v_6) at (6,3) {$\bullet$}; \node[inner sep=1pt] at (6.3,3.3)
{$(1,2)$}; \draw[-latex, red, very thick] (v_2) edge[out=270, in=90](v_1);
\draw[-latex, blue, dashed, very thick] (v_4) edge[out=180, in=0](v_2);
\draw[-latex, red, very thick] (v_4) edge[out=270, in=90] (v_3);
\draw[-latex, blue, dashed, very thick] (v_3) edge[out=180, in=0] (v_1);
\draw[-latex, red, very thick] (v_6) edge[out=270, in=90] (v_5);
\draw[-latex, blue, dashed, very thick] (v_5) edge[out=180, in=0] (v_3);
\draw[-latex, blue, dashed, very thick] (v_6) edge[out=180, in=0] (v_4);
\end{tikzpicture}
\end{equation*}
where solid edges have degree $\left( 1,0\right) $ and dashed edges have
degree $\left( 0,1\right) $.
\end{example}

We write
\begin{equation*}
W_{\Lambda }:=\bigcup_{n\in \left( \mathbb{N\cup }\left\{ \infty \right\}
\right) ^{k}}\{x:\Omega _{k,n}\rightarrow \Lambda :x\text{ is a degree
preserving functor}\}\text{.}
\end{equation*}%
Suppose $x\in W_{\Lambda }$. For $n\in \mathbb{N}^{k}$ and $n\leq d\left(
x\right) $, the path $\sigma ^{n}x$ is defined by $\sigma ^{n}x\left(
0,m\right) =x\left( n,n+m\right) $ for all $m\leq d\left( x\right) -n$.

For $n\in \mathbb{N}^{k}$, we define%
\begin{equation*}
\Lambda ^{n}:=\{\lambda \in \Lambda :d\left( \lambda \right) =n\}
\end{equation*}%
and call the elements $\lambda $ of $\Lambda ^{n}$ \emph{paths of degree }$n$%
. In particular, we regard elements of $\Lambda ^{0}$ as \emph{vertices}.\
We use term \emph{edge} to denote a path $e\in \Lambda ^{e_{i}}$ where $%
1\leq i\leq k$, and write
\begin{equation*}
\Lambda ^{1}:=\bigcup_{1\leq i\leq k}\Lambda ^{e_{i}}
\end{equation*}%
for the set of all edges.

For $v\in \Lambda ^{0}$, $\lambda \in \Lambda $ and $E\subseteq \Lambda $,
we define
\begin{equation*}
vE:=\left\{ \mu \in E:r\left( \mu \right) =v\right\} \text{ and }\lambda
E:=\left\{ \lambda \mu \in \Lambda :\mu \in E,r\left( \mu \right) =s\left(
\lambda \right) \right\} \,\text{.}
\end{equation*}%
We say that $\Lambda $ is \emph{row-finite} if for every $v\in \Lambda ^{0}$%
, the set $v\Lambda ^{e_{i}}$ is finite for $1\leq i\leq k$. Finally, we say
$v\in \Lambda ^{0}$ is a \emph{source} if there exists $m\in \mathbb{N}^{k}$
such that $v\Lambda ^{m}=\emptyset $.

\begin{example}
Consider the $2$-graph $\Lambda _{1}$ which has skeleton
\begin{equation*}
\begin{tikzpicture} \node[inner sep=1pt] (v) at (0,0) {$\bullet$};
\node[inner sep=1pt] at (-0.3,0) {$v$}; \node[inner sep=1pt] (w) at (3,0)
{$\bullet$}; \node[inner sep=1pt] at (3.3,0) {$w$}; \draw[-latex, red, very
thick] (w) edge[out=135, in=45] node[pos=0.5, above, black]{$e$} (v);
\draw[-latex, blue, dashed, very thick] (w) edge[out=225, in=315]
node[pos=0.5, below, black]{$f$} (v); \end{tikzpicture}
\end{equation*}
where the solid edge has degree $\left( 1,0\right) $ and the dashed edge has
degree $\left( 0,1\right) $. It is clear that $w$ is a source since there is
no paths going in the vertex. Hence, $\Lambda _{1}$ is row-finite with
sources.
\end{example}

\begin{example}
Let $\Lambda _{2}$ be the $2$-graph with skeleton%
\begin{equation*}
\begin{tikzpicture} \node[inner sep=1pt] (v) at (0,0) {$\bullet$};
\node[inner sep=1pt] at (0,-0.3) {$v$}; \path[->,every
loop/.style={looseness=20}] (v) edge[in=-45,out=-135,loop, red, very thick]
node[pos=0.5, below, black]{$e$} (v); \path[->,every
loop/.style={looseness=20}] (v) edge[in=-225,out=-315,loop, blue, dashed,
very thick] node[pos=0.5, above, black]{$f$} (v); \end{tikzpicture}
\end{equation*}
where $ef=fe$, the solid edge has degree $\left( 1,0\right) $ and the dashed
edge has degree $\left( 0,1\right) $. Since $v\Lambda ^{m}\neq \emptyset $
for all $m\in \mathbb{N}^{k}$, then $\Lambda _{2}$ is row-finite with no
sources.
\end{example}

For $\lambda ,\mu \in \Lambda $, we define%
\begin{equation*}
\operatorname{MCE}\left( \lambda ,\mu \right) :=\left\{ \tau \in \Lambda :d\left(
\tau \right) =d\left( \lambda \right) \vee d\left( \mu \right) ,\tau \left(
0,d\left( \lambda \right) \right) =\lambda ,\tau \left( 0,d\left( \mu
\right) \right) =\mu \right\}
\end{equation*}%
and%
\begin{equation*}
\Lambda ^{\min }\left( \lambda ,\mu \right) :=\left\{ \left( \lambda
^{\prime },\mu ^{\prime }\right) \in \Lambda \times \Lambda :\lambda \lambda
^{\prime }=\mu \mu ^{\prime }\in \operatorname{MCE}\left( \lambda ,\mu \right)
\right\} \text{.}
\end{equation*}%
Meanwhile, for $E\subseteq \Lambda $ and $\lambda \in \Lambda $, we write
\begin{equation*}
\operatorname{Ext}\left( \lambda ;E\right) :=\bigcup_{\mu \in E}\left\{ \rho :(\rho
,\tau )\in \Lambda ^{\min }\left( \lambda ,\mu \right) \right\} \text{.}
\end{equation*}%
A set $E\subseteq v\Lambda $ is \emph{exhaustive} if for all $\lambda \in
v\Lambda $, there exists $\mu \in E$ such that $\Lambda ^{\min }\left(
\lambda ,\mu \right) \neq \emptyset $.

In this article, we focus on row-finite $k$-graphs. For further discussion
about row-finite $k$-graphs and their generalisations, see \cite%
{KP00,RSY03,RSY04,CBMS,W11}.

\subsection{Graded rings.}

Let $G$ be an additive abelian group. If $A$ is a ring, we say that $A$ is $%
G $\emph{-graded }if there are additive subgroups $\left\{ A_{g}:g\in
G\right\} $ satisfying:%
\begin{equation*}
A=\bigoplus {}_{g\in G}A_{g}\text{ and for }g,h\in G\text{, }%
A_{g}A_{h}\subseteq A_{g+h}\text{.}
\end{equation*}%
For $g\in G$, the subgroup $A_{g}$ is called the \emph{homogeneous component
of }$A$\emph{\ of degree }$g$.

\section{Cohn $\Lambda $-families}

\label{Section_CPfamily}Throughout this section, suppose that $\Lambda $ is
a row-finite $k$-graph with no sources and $R$ is a commutative ring with $1$%
. For each $\lambda \in \Lambda $, we introduce a formal symbol $\lambda
^{\ast }$ called a \emph{ghost path}; if $v\in \Lambda ^{0}$, we identify $%
v^{\ast }:=v$. We then write $G\left( \Lambda \right) $ the set of ghost
paths and define $r$ and $s$ on $G\left( \Lambda \right) $ by%
\begin{equation*}
\text{ }r\left( \lambda ^{\ast }\right) :=s\left( \lambda \right) \text{ and
}s\left( \lambda ^{\ast }\right) :=r\left( \lambda \right) \text{.}
\end{equation*}%
We also define composition in $G\left( \Lambda \right) $: set $\lambda
^{\ast }\mu ^{\ast }=\left( \mu \lambda \right) ^{\ast }$ for $\lambda ,\mu
\in \Lambda $ with $s(\mu )=r(\lambda )$; and finally, we write $G\left(
\Lambda ^{\neq 0}\right) :=\left\{ \lambda ^{\ast }:\lambda \in \left.
\Lambda \right\backslash \Lambda ^{0}\right\} $.

\begin{definition}
\label{CP-family}A \emph{Cohn }$\Lambda $\emph{-family} $\left\{ T_{\lambda
},T_{\mu ^{\ast }}:\lambda ,u\in \Lambda \right\} $ in an $R$-algebra $A$
consists of a map $T:\Lambda \cup G\left( \Lambda ^{\neq 0}\right)
\rightarrow A$ such that:
\end{definition}

\begin{enumerate}
\item[(CP1)] $\left\{ T_{v}:v\in \Lambda ^{0}\right\} $ is a collection of
mutually orthogonal idempotents;

\item[(CP2)] for $\lambda ,\mu \in \Lambda $ with $s\left( \lambda \right)
=r\left( \mu \right) $, we have $T_{\lambda }T_{\mu }=T_{\lambda \mu }$ and $%
T_{\mu ^{\ast }}T_{\lambda ^{\ast }}=T_{\left( \lambda \mu \right) ^{\ast }}$%
;

\item[(CP3)] $T_{\lambda ^{\ast }}T_{\mu }=\sum_{(\nu ,\gamma )\in \Lambda
^{\min }\left( \lambda ,\mu \right) }T_{\nu }T_{\gamma ^{\ast }}$ for all $%
\lambda ,\mu \in \Lambda $.
\end{enumerate}

\begin{remark}
\begin{enumerate}
\item[(i)] For $1$-graph $E$, people usually write $\left\{ v,e,e^{\ast
}:v\in E^{0},e\in E^{1}\right\} $ instead of $\{T_{\lambda },T_{\mu ^{\ast
}}:\lambda ,u\in E^{\ast }\}$ (see \cite{A15,AA05,AK13,AM12,AG12}). We do
not use this notation because we want to distinguish the paths in $E$ and
the elements of the algebra $A$.

\item[(ii)] Since $\Lambda $ is row-finite, $\left\vert \Lambda ^{\min
}\left( \lambda ,\mu \right) \right\vert $ is finite and the sum in (CP3)
makes sense. We also interpret the empty sum as $0$, so $\Lambda ^{\min
}\left( \lambda ,\mu \right) =\emptyset $ implies $T_{\lambda ^{\ast
}}T_{\mu }=0$.
\end{enumerate}
\end{remark}

Since (CP1-3) are the same as (KP1-3) of \cite[Definition 3.1]{CP15},
Proposition 3.3 of \cite{CP15} also applies to Cohn $\Lambda $-families as
stated in the following proposition.

\begin{proposition}
\label{properties-of-CP}Suppose that $\Lambda $ is a row-finite $k$-graph
with no sources, $R$ is a commutative ring with $1$, and $\{T_{\lambda
},T_{\mu ^{\ast }}:\lambda ,u\in \Lambda \}$ is a Cohn $\Lambda $-family in
an $R$-algebra $A$. Then

\begin{enumerate}
\item[(a)] $T_{\lambda }T_{\lambda ^{\ast }}T_{\mu }T_{\mu ^{\ast
}}=\sum_{\lambda \nu \in \operatorname{MCE}\left( \lambda ,\mu \right) }T_{\lambda
\nu }T_{\left( \lambda \nu \right) ^{\ast }}$ for $\lambda ,\mu \in \Lambda $%
; and $\left\{ T_{\lambda }T_{\lambda ^{\ast }}:\lambda \in \Lambda \right\}
$ is a commuting family.

\item[(b)] The subalgebra generated by $\left\{ T_{\lambda },T_{\mu ^{\ast
}}:\lambda ,u\in \Lambda \right\} $ is
\begin{equation*}
\operatorname{span}_{R}\{T_{\lambda }T_{\mu ^{\ast }}:\lambda ,u\in \Lambda ,s\left(
\lambda \right) =s\left( \mu \right) \}\text{.}
\end{equation*}
\end{enumerate}
\end{proposition}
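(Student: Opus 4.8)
The plan is to exploit the remark preceding the statement: the axioms (CP1)--(CP3) are formally identical to the Kumjian--Pask relations (KP1)--(KP3) of \cite[Definition 3.1]{CP15}, and the proof of \cite[Proposition 3.3]{CP15} uses only these relations. Hence that argument transfers verbatim, and my proof amounts to carrying out the same short computations here. I would present them directly for completeness.

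For part (a), I would begin with the left-hand side $T_{\lambda}T_{\lambda^{\ast}}T_{\mu}T_{\mu^{\ast}}$ and apply (CP3) to the inner product $T_{\lambda^{\ast}}T_{\mu}$, rewriting it as $\sum_{(\nu,\gamma)\in\Lambda^{\min}(\lambda,\mu)}T_{\nu}T_{\gamma^{\ast}}$. For each index $(\nu,\gamma)$ the defining condition $\lambda\nu=\mu\gamma\in\operatorname{MCE}(\lambda,\mu)$ lets me collapse $T_{\lambda}T_{\nu}=T_{\lambda\nu}$ and $T_{\gamma^{\ast}}T_{\mu^{\ast}}=T_{(\mu\gamma)^{\ast}}=T_{(\lambda\nu)^{\ast}}$ using (CP2), once I check that the relevant sources and ranges match. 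The map $(\nu,\gamma)\mapsto\lambda\nu$ is a bijection from $\Lambda^{\min}(\lambda,\mu)$ onto $\operatorname{MCE}(\lambda,\mu)$, its inverse splitting any $\tau\in\operatorname{MCE}(\lambda,\mu)$ via the factorisation property as $\tau=\lambda\,\tau(d(\lambda),d(\tau))=\mu\,\tau(d(\mu),d(\tau))$. This reindexes the sum as $\sum_{\lambda\nu\in\operatorname{MCE}(\lambda,\mu)}T_{\lambda\nu}T_{(\lambda\nu)^{\ast}}$, which is the claimed identity. The commuting statement then comes for free: the right-hand side depends only on the unordered pair $\{\lambda,\mu\}$ because $\operatorname{MCE}(\lambda,\mu)=\operatorname{MCE}(\mu,\lambda)$, so $T_{\lambda}T_{\lambda^{\ast}}T_{\mu}T_{\mu^{\ast}}=T_{\mu}T_{\mu^{\ast}}T_{\lambda}T_{\lambda^{\ast}}$.

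For part (b), write $S:=\operatorname{span}_{R}\{T_{\lambda}T_{\mu^{\ast}}:s(\lambda)=s(\mu)\}$. The inclusion of $S$ in the generated subalgebra is immediate, so the content is the reverse inclusion. I would first note that $S$ contains every generator: by (CP1)--(CP2) one has $T_{\lambda}=T_{\lambda}T_{s(\lambda)}$ and $T_{\mu^{\ast}}=T_{s(\mu)}T_{\mu^{\ast}}$, each of which meets the source condition since $s(\mu)$ and $s(\lambda)$ are vertices. The essential step is to show $S$ is closed under multiplication. Given two spanning elements $T_{\lambda}T_{\mu^{\ast}}$ and $T_{\alpha}T_{\beta^{\ast}}$, I would apply (CP3) to the inner product $T_{\mu^{\ast}}T_{\alpha}$ and then (CP2) to rewrite $T_{\lambda}T_{\mu^{\ast}}T_{\alpha}T_{\beta^{\ast}}$ as $\sum_{(\nu,\gamma)\in\Lambda^{\min}(\mu,\alpha)}T_{\lambda\nu}T_{(\beta\gamma)^{\ast}}$. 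The condition $\mu\nu=\alpha\gamma$ forces $s(\lambda\nu)=s(\nu)=s(\gamma)=s(\beta\gamma)$, so every summand again lies in $S$; hence $S$ is a subalgebra containing the generators, and therefore equals the generated subalgebra.

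The computations are routine, and I do not anticipate a substantive obstacle, precisely because the relations invoked are exactly (KP1)--(KP3) and the result is \cite[Proposition 3.3]{CP15} restated. The only points needing care are the source/range bookkeeping when applying (CP2) and the verification that $(\nu,\gamma)\mapsto\lambda\nu$ is a genuine bijection $\Lambda^{\min}(\lambda,\mu)\to\operatorname{MCE}(\lambda,\mu)$, which is where the factorisation property enters.
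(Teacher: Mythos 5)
Your proposal is correct and follows exactly the route the paper takes: the paper's entire proof is the observation that (CP1)--(CP3) coincide with (KP1)--(KP3) of \cite[Definition 3.1]{CP15}, so \cite[Proposition 3.3]{CP15} applies verbatim, and the computations you write out (reindexing $\Lambda^{\min}(\lambda,\mu)\to\operatorname{MCE}(\lambda,\mu)$ for (a), and closure of the span under multiplication via (CP3) then (CP2) for (b)) are precisely that transferred argument. The source/range bookkeeping you flag all checks out, so there is nothing to add.
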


Now we give an example of a Cohn $\Lambda $-family. We use this example
later to study properties of \textquotedblleft the universal Cohn $\Lambda $%
-family\textquotedblright\ (Theorem \ref{universal-CP-family}).

\begin{proposition}
\label{the-path-representation}Suppose that $\Lambda $ is a row-finite $k$%
-graph with no sources and $R$ is a commutative ring with $1$. Suppose that $%
\mathbb{F}_{R}\left( W_{\Lambda }\right) $ is the free $R$-module with basis
$W_{\Lambda }$. Then there exists a Cohn $\Lambda $-family $\left\{
T_{\lambda },T_{\mu ^{\ast }}:\lambda ,u\in \Lambda \right\} $ in the $R$%
-algebra $\operatorname{End}\left( \mathbb{F}_{R}\left( W_{\Lambda }\right) \right) $
such that for $v\in \Lambda ^{0}$, $\lambda ,u\in \Lambda $ and $x\in
W_{\Lambda }$, we have%
\begin{align*}
T_{v}\left( x\right) & =%
\begin{cases}
x & \text{if }r\left( x\right) =v\text{;} \\
0 & \text{otherwise,}%
\end{cases}
\\
T_{\lambda }\left( x\right) & =%
\begin{cases}
\lambda x & \text{if }s\left( \lambda \right) =r\left( x\right) \text{;} \\
0 & \text{otherwise,}%
\end{cases}
\\
T_{\mu ^{\ast }}\left( x\right) & =%
\begin{cases}
\sigma ^{d\left( \mu \right) }x & \text{if }x\left( 0,d\left( \mu \right)
\right) =\mu \text{;} \\
0 & \text{otherwise.}%
\end{cases}%
\end{align*}%
Furthermore, $rT_{v}\neq 0$ and $r\prod_{e\in v\Lambda ^{1}}\left(
T_{v}-T_{e}T_{e}^{\ast }\right) \neq 0$ for all $r\in \left.
R\right\backslash \left\{ 0\right\} $ and $v\in \Lambda ^{0}$.
\end{proposition}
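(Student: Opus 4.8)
The plan is to define the three families of operators on the basis $W_{\Lambda}$ by the stated formulas and then extend them $R$-linearly; since $\mathbb{F}_{R}(W_{\Lambda})$ is free on $W_{\Lambda}$, each formula determines a unique element of $\operatorname{End}(\mathbb{F}_{R}(W_{\Lambda}))$ whose value on every basis vector is either a basis vector or $0$. For $v\in\Lambda^{0}$ the operator $T_{v}$ agrees with the $T_{\lambda}$ formula at $\lambda=v$, so the assignment is consistent on $\Lambda$ and we obtain a map $T\colon\Lambda\cup G(\Lambda^{\neq 0})\to\operatorname{End}(\mathbb{F}_{R}(W_{\Lambda}))$. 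I would then verify (CP1)--(CP3) by evaluating each relation on an arbitrary basis element $x\in W_{\Lambda}$. Relation (CP1) is immediate: $T_{v}$ is the projection onto the span of those $x$ with $r(x)=v$, and since each path has a unique range these projections are orthogonal idempotents. For the first identity in (CP2) I would compute $T_{\lambda}T_{\mu}(x)$ and $T_{\lambda\mu}(x)$ directly, using that $r(\mu x)=r(\mu)=s(\lambda)$ is exactly the condition under which $\lambda$ may be prepended; for the second identity I would invoke the factorisation property to see that $x(0,d(\lambda))=\lambda$ together with $(\sigma^{d(\lambda)}x)(0,d(\mu))=\mu$ is equivalent to $x(0,d(\lambda\mu))=\lambda\mu$, and that $\sigma^{d(\mu)}\sigma^{d(\lambda)}x=\sigma^{d(\lambda\mu)}x$.

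The crux, and where I expect the real work to lie, is (CP3). Fixing $x\in W_{\Lambda}$, the left-hand side $T_{\lambda^{\ast}}T_{\mu}(x)$ is nonzero precisely when $r(x)=s(\mu)$ and $(\mu x)(0,d(\lambda))=\lambda$, in which case it equals $\sigma^{d(\lambda)}(\mu x)$; on the right-hand side, the $(\nu,\gamma)$-term $T_{\nu}T_{\gamma^{\ast}}(x)$ is nonzero precisely when $x(0,d(\gamma))=\gamma$, in which case it equals $\nu\,\sigma^{d(\gamma)}x$. The key combinatorial claim is that at most one pair $(\nu,\gamma)\in\Lambda^{\min}(\lambda,\mu)$ can satisfy $x(0,d(\gamma))=\gamma$, and that such a pair exists if and only if the left-hand side is nonzero. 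To prove this I would set $\tau:=(\mu x)(0,d(\lambda)\vee d(\mu))$ whenever $r(x)=s(\mu)$ and $(\mu x)(0,d(\lambda))=\lambda$; the factorisation property forces $\tau\in\operatorname{MCE}(\lambda,\mu)$, so writing $\tau=\lambda\nu=\mu\gamma$ produces the required pair and gives $x(0,d(\gamma))=(\mu x)(d(\mu),d(\tau))=\gamma$. Uniqueness follows because any qualifying $\gamma'$ forces $\mu\gamma'=(\mu x)(0,d(\tau))=\tau$, whence $\gamma'=\gamma$ and $\nu'=\nu$. A short shift computation $\sigma^{d(\lambda)}(\mu x)=\nu\,\sigma^{d(\mu)+d(\gamma)}(\mu x)=\nu\,\sigma^{d(\gamma)}x$ then shows the surviving term matches the left-hand side, while the degenerate cases (where $T_{\mu}(x)=0$, or $\mu x$ fails to extend $\lambda$) force both sides to vanish by the same range and initial-segment bookkeeping.

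For the nondegeneracy statements I would exploit the degree-$0$ element of $W_{\Lambda}$ determined by $v$, namely the functor $\Omega_{k,0}\to\Lambda$ selecting $v$, which lies in $W_{\Lambda}$ and satisfies $r(v)=v$. Then $T_{v}(v)=v$, so $rT_{v}(v)=rv\neq 0$ in the free module whenever $r\neq 0$, giving $rT_{v}\neq 0$. For the second claim, row-finiteness makes $v\Lambda^{1}$ finite, so $\prod_{e\in v\Lambda^{1}}(T_{v}-T_{e}T_{e}^{\ast})$ is a finite product of the mutually commuting operators $T_{e}T_{e}^{\ast}$ supplied by Proposition \ref{properties-of-CP}(a). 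Each $T_{e}T_{e}^{\ast}$ is the projection onto paths beginning with $e$, and since $v$ has degree $0$ it begins with no edge, so $T_{e}^{\ast}(v)=0$ and hence $(T_{v}-T_{e}T_{e}^{\ast})(v)=v$. Evaluating the whole product on $v$ therefore returns $v$, and so $r\prod_{e\in v\Lambda^{1}}(T_{v}-T_{e}T_{e}^{\ast})(v)=rv\neq 0$, completing the argument. The only genuine obstacle is the bookkeeping in (CP3); everything else reduces to careful but routine use of the factorisation property and the freeness of $\mathbb{F}_{R}(W_{\Lambda})$.
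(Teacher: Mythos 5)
Your proposal is correct and follows essentially the same route as the paper's proof: define the operators on the basis $W_{\Lambda}$, extend by freeness, verify (CP1)--(CP3) pointwise with the only real work in (CP3) being the existence and uniqueness of the surviving pair $(\nu,\gamma)$ via the factorisation property, and evaluate at the degree-zero path $v$ for the nonvanishing claims. No gaps.
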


\begin{proof}
We modify the construction of the infinite-path representation of \cite%
{ACaHR13}. Take $v\in \Lambda ^{0}$ and $\lambda ,\mu \in \left. \Lambda
\right\backslash \Lambda ^{0}$. Define functions $f_{v}$, $f_{\lambda }$,
and $f_{\mu ^{\ast }}:W_{\Lambda }\rightarrow \mathbb{F}_{R}\left(
W_{\Lambda }\right) $ by%
\begin{align*}
f_{v}\left( x\right) & =%
\begin{cases}
x & \text{if }r\left( x\right) =v\text{;} \\
0 & \text{otherwise,}%
\end{cases}
\\
f_{\lambda }\left( x\right) & =%
\begin{cases}
\lambda x & \text{if }s\left( \lambda \right) =r\left( x\right) \text{;} \\
0 & \text{otherwise,}%
\end{cases}
\\
f_{\mu ^{\ast }}\left( x\right) & =%
\begin{cases}
\sigma ^{d\left( \mu \right) }x & \text{if }x\left( 0,d\left( \mu \right)
\right) =\mu \text{;} \\
0 & \text{otherwise.}%
\end{cases}%
\end{align*}%
By the universal property of free modules, there exist nonzero endomorphisms
$S_{v},S_{\lambda },S_{\mu ^{\ast }}:\mathbb{F}_{R}\left( W_{\Lambda
}\right) \rightarrow \mathbb{F}_{R}\left( W_{\Lambda }\right) $ extending $%
f_{v}$, $f_{\lambda }$, and $f_{\mu ^{\ast }}$.

We claim that $\left\{ T_{\lambda },T_{\mu ^{\ast }}:\lambda ,u\in \Lambda
\right\} $ is a Cohn $\Lambda $-family. First we show (CP1). Take $v\in
\Lambda ^{0}$. Then $T_{v}^{2}\left( x\right) =x=T_{v}\left( x\right) $ if $%
r\left( x\right) =v$, and $T_{v}^{2}\left( x\right) =0=T_{v}\left( x\right) $
otherwise. Hence $T_{v}^{2}=T_{v}$. Now take $v,w\in \Lambda ^{0}$ with $%
v\neq w$. Then $x\in wW_{\Lambda }$ implies $x\notin vW_{\Lambda }$. Thus $%
T_{v}T_{w}\left( x\right) =0$ for every $x\in W_{\Lambda }$, and then $%
T_{v}T_{w}=0$.

Next we show (CP2). Take $\lambda ,\mu \in \Lambda $ with $s\left( \lambda
\right) =r\left( \mu \right) $. Then $T_{\lambda }T_{\mu }\left( x\right)
=\lambda \mu x=T_{\lambda \mu }\left( x\right) $ if $x\in s\left( \mu
\right) W_{\Lambda }$, and $T_{\lambda }T_{\mu }\left( x\right)
=0=T_{\lambda \mu }\left( x\right) $ otherwise. Hence $T_{\lambda }T_{\mu
}=T_{\lambda \mu }$. On the other hand, we have
\begin{equation*}
T_{\mu ^{\ast }}T_{\lambda ^{\ast }}\left( x\right) =T_{\mu ^{\ast }}\sigma
^{d\left( \lambda \right) }x=\sigma ^{d\left( \lambda \right) +d\left( \mu
\right) }x=\sigma ^{d\left( \lambda \mu \right) }x=T_{\left( \lambda \mu
\right) ^{\ast }}\left( x\right)
\end{equation*}%
if $x\left( 0,d\left( \lambda \mu \right) \right) =\lambda \mu $, and $%
T_{\mu ^{\ast }}T_{\lambda ^{\ast }}\left( x\right) =0=T_{\left( \lambda \mu
\right) ^{\ast }}\left( x\right) $ otherwise. Therefore, $T_{\mu ^{\ast
}}T_{\lambda ^{\ast }}=T_{\left( \lambda \mu \right) ^{\ast }}$.

Next we show (CP3). Take $\lambda ,\mu \in \Lambda $. If $r\left( \lambda
\right) \neq r\left( \mu \right) $, then $T_{\lambda ^{\ast }}T_{\mu }=0$
and $\Lambda ^{\min }\left( \lambda ,\mu \right) =\emptyset $, as required.
Suppose $r\left( \lambda \right) =r\left( \mu \right) $. We have%
\begin{equation*}
T_{\lambda ^{\ast }}T_{\mu }\left( x\right) =%
\begin{cases}
\left( \mu x\right) \left( d\left( \lambda \right) ,d\left( \mu x\right)
\right) & \text{if }x\in s\left( \mu \right) W_{\Lambda }\text{ and }\left(
\mu x\right) \left( 0,d\left( \lambda \right) \right) =\lambda \text{;} \\
0 & \text{otherwise.}%
\end{cases}%
\end{equation*}%
Take $x\in s\left( \mu \right) W_{\Lambda }$. Note that $s\left( \mu \right)
=r\left( \gamma \right) $ for $(\nu ,\gamma )\in \Lambda ^{\min }\left(
\lambda ,\mu \right) $. First suppose $\left( \mu x\right) \left( 0,d\left(
\lambda \right) \right) \neq \lambda $. Then for $(\nu ,\gamma )\in \Lambda
^{\min }\left( \lambda ,\mu \right) $,%
\begin{equation*}
\left( \mu x\right) \left( 0,d\left( \lambda \nu \right) \right) \neq
\lambda \nu \text{ and }\left( \mu x\right) \left( 0,d\left( \mu \gamma
\right) \right) \neq \mu \gamma \text{.}
\end{equation*}%
Hence $x\left( 0,d\left( \gamma \right) \right) \neq \gamma $ and $T_{\nu
}T_{\gamma ^{\ast }}\left( x\right) =T_{\nu }\left( 0\right) =0$. Therefore
\begin{equation*}
\sum_{(\nu ,\gamma )\in \Lambda ^{\min }\left( \lambda ,\mu \right) }T_{\nu
}T_{\gamma ^{\ast }}\left( x\right) =0.
\end{equation*}%
Next suppose $\left( \mu x\right) \left( 0,d\left( \lambda \right) \right)
=\lambda $. Since $\left( \mu x\right) \left( 0,d\left( \lambda \right)
\right) =\lambda $ and $\left( \mu x\right) \left( 0,d\left( \mu \right)
\right) =\mu $, then there is $\gamma \in s\left( \mu \right) \Lambda $ such
that $(\nu ,\gamma )\in \Lambda ^{\min }\left( \lambda ,\mu \right) $ and $%
\left( \mu x\right) \left( 0,d\left( \mu \gamma \right) \right) =\mu \gamma $%
. Therefore $x\left( 0,d\left( \gamma \right) \right) =\gamma $. Note that
this $\gamma $ is unique by the factorisation property. Hence for $(\nu
^{\prime },\gamma ^{\prime })\in \Lambda ^{\min }\left( \lambda ,\mu \right)
$ such that $(\nu ^{\prime },\gamma ^{\prime })\neq (\nu ,\gamma )$, we have
$T_{\nu ^{\prime }}T_{\gamma ^{\prime \ast }}\left( x\right) =0$. Since $%
x\left( 0,d\left( \gamma \right) \right) =\gamma $, then%
\begin{align*}
T_{\nu }T_{\gamma ^{\ast }}\left( x\right) & =T_{\nu }\left( x\left( d\left(
\gamma \right) ,d\left( x\right) \right) \right) =\nu \left[ x\left( d\left(
\gamma \right) ,d\left( x\right) \right) \right] \\
& =\nu \left[ \left( \mu x\right) \left( d\left( \mu \gamma \right) ,d\left(
\mu x\right) \right) \right] \\
& =\nu \left[ \left( \mu x\right) \left( d\left( \lambda \gamma \right)
,d\left( \mu x\right) \right) \right] \text{ (since }\mu \gamma =\lambda \nu
\text{)} \\
& =\left( \mu x\right) \left( d\left( \lambda \right) ,d\left( \mu x\right)
\right)
\end{align*}%
and%
\begin{equation*}
\sum_{(\nu ^{\prime },\gamma ;)\in \Lambda ^{\min }\left( \lambda ,\mu
\right) }T_{\nu ^{\prime }}T_{\gamma ^{\prime \ast }}\left( x\right) =T_{\nu
}T_{\gamma ^{\ast }}\left( x\right) =\left( \mu x\right) \left( d\left(
\lambda \right) ,d\left( \mu x\right) \right) =T_{\lambda ^{\ast }}T_{\mu
}\left( x\right) \text{,}
\end{equation*}%
as required. Thus $\left\{ T_{\lambda },T_{\mu ^{\ast }}:\lambda ,u\in
\Lambda \right\} $ is a Cohn $\Lambda $-family, as claimed.

Finally we show that $rT_{v}\neq 0$ and $r\prod_{e\in v\Lambda ^{1}}\left(
T_{v}-T_{e}T_{e}^{\ast }\right) \neq 0$ for all $r\in \left.
R\right\backslash \left\{ 0\right\} $ and $v\in \Lambda ^{0}$. Take $r\in
\left. R\right\backslash \left\{ 0\right\} $ and $v\in \Lambda ^{0}$. Then $%
v\in W_{\Lambda }$. Hence $rT_{v}\left( v\right) =rv$ and $rT_{v}\neq 0$. On
the other hand, for $e\in v\Lambda ^{1}$, we have $T_{e}^{\ast }\left(
v\right) =0$ and then
\begin{equation*}
r\prod_{e\in v\Lambda ^{1}}\left( T_{v}-T_{e}T_{e}^{\ast }\right) \left(
v\right) =rT_{v}\left( v\right) =rv\text{.}
\end{equation*}%
Hence, $r\prod_{e\in v\Lambda ^{1}}\left( T_{v}-T_{e}T_{e}^{\ast }\right)
\neq 0$, as required.
\end{proof}

Next we show that there is an $R$-algebra which is universal for Cohn $%
\Lambda $-families.

\begin{theorem}
\label{universal-CP-family}Suppose that $\Lambda $ is a row-finite $k$-graph
with no sources and $R$ is a commutative ring with $1$.

\begin{enumerate}
\item[(a)] There is a universal $R$-algebra ${\normalsize \operatorname{C}}%
_{R}\left( \Lambda \right) $ generated by a Cohn $\Lambda $-family $%
\{t_{\lambda },t_{\mu ^{\ast }}:\lambda ,u\in \Lambda \}$ such that if $%
\left\{ T_{\lambda },T_{\mu ^{\ast }}:\lambda ,u\in \Lambda \right\} $ is a
Cohn $\Lambda $-family in an $R$-algebra $A$, then there exists an unique $R$%
-algebra homomorphism $\phi _{T}:{\normalsize \operatorname{C}}_{R}\left( \Lambda
\right) \rightarrow A$ such that $\phi _{T}\left( t_{\lambda }\right)
=T_{\lambda }$ and $\phi _{T}\left( t_{\mu ^{\ast }}\right) =T_{\mu ^{\ast
}} $ for $\lambda ,\mu \in \Lambda $.

\item[(b)] We have $rt_{v}\neq 0$ and $r\prod_{e\in v\Lambda ^{1}}\left(
t_{v}-t_{e}t_{e}^{\ast }\right) \neq 0$ for all $r\in \left.
R\right\backslash \left\{ 0\right\} $ and $v\in \Lambda ^{0}$.
\end{enumerate}
\end{theorem}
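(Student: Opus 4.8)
The plan is to get part (a) by the standard construction of a universal object as a free algebra modulo relations, and then to extract part (b) by pushing the generators forward through the resulting universal homomorphism into the concrete path representation of Proposition \ref{the-path-representation}. The point is that all of the \emph{nontriviality} in (b) is already encoded in that representation, so once (a) is in place, (b) is essentially a one-line consequence of functoriality.

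For part (a), first I would form the free (non-unital) $R$-algebra $\mathbb{F}$ whose underlying $R$-module is free on the set of nonempty words in the alphabet $\Lambda \cup G(\Lambda^{\neq 0})$, with multiplication given by concatenation of words extended $R$-bilinearly. Next I would let $I \subseteq \mathbb{F}$ be the two-sided ideal generated by the elements encoding (CP1)--(CP3): writing $x_\bullet$ for the length-one word attached to a generator, these are $x_v^2 - x_v$ and $x_v x_w$ (for $v \neq w$ in $\Lambda^0$) for (CP1); $x_\lambda x_\mu - x_{\lambda\mu}$ and $x_{\mu^*} x_{\lambda^*} - x_{(\lambda\mu)^*}$ (for $s(\lambda)=r(\mu)$) for (CP2); and $x_{\lambda^*} x_\mu - \sum_{(\nu,\gamma)\in\Lambda^{\min}(\lambda,\mu)} x_\nu x_{\gamma^*}$ for (CP3). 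Here row-finiteness of $\Lambda$ guarantees each sum in (CP3) is finite, so these are genuine elements of $\mathbb{F}$. I would then set $\operatorname{C}_R(\Lambda) := \mathbb{F}/I$, let $t_\lambda, t_{\mu^*}$ be the images of the generators, and note that by construction the relations (CP1)--(CP3) hold among them, so $\{t_\lambda, t_{\mu^*}\}$ is a Cohn $\Lambda$-family generating $\operatorname{C}_R(\Lambda)$. For the universal property, any Cohn $\Lambda$-family $\{T_\lambda, T_{\mu^*}\}$ in an $R$-algebra $A$ yields, via the universal property of the free algebra, a homomorphism $\pi : \mathbb{F} \to A$ sending each generator to the corresponding $T$; since the $T$'s satisfy (CP1)--(CP3), every generator of $I$ lies in $\ker\pi$, so $\pi$ descends to $\phi_T : \operatorname{C}_R(\Lambda) \to A$, and uniqueness is immediate because the $t$'s generate. (As (CP1)--(CP3) are literally (KP1)--(KP3) of \cite{CP15}, this is exactly the construction used there, which I would follow.)

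For part (b), I would apply part (a) to the particular Cohn $\Lambda$-family $\{T_\lambda, T_{\mu^*}\}$ in $\operatorname{End}(\mathbb{F}_R(W_\Lambda))$ produced by Proposition \ref{the-path-representation}, obtaining $\phi_T : \operatorname{C}_R(\Lambda) \to \operatorname{End}(\mathbb{F}_R(W_\Lambda))$ with $\phi_T(t_\lambda)=T_\lambda$ and $\phi_T(t_{\mu^*})=T_{\mu^*}$. Since $\phi_T$ is an $R$-algebra homomorphism, for $r \in R\setminus\{0\}$ and $v \in \Lambda^0$ we have $\phi_T(rt_v) = rT_v$ and $\phi_T\bigl(r\prod_{e\in v\Lambda^1}(t_v - t_e t_e^*)\bigr) = r\prod_{e\in v\Lambda^1}(T_v - T_e T_e^*)$. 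Proposition \ref{the-path-representation} asserts that both right-hand sides are nonzero, so neither argument of $\phi_T$ can be zero, which is precisely (b).

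As for where the difficulty lies: the construction in (a) is routine bookkeeping, and the substance of (b) — that the universal algebra does not collapse — is already delivered by Proposition \ref{the-path-representation}, so once that representation is available the nonvanishing statements cost nothing beyond applying a homomorphism. The only points that demand care in (a) are checking that the list of generators and relations is complete and that the finite sums in (CP3) are well-defined elements of $\mathbb{F}$, both of which are secured by the row-finiteness hypothesis. I therefore expect no genuine obstacle here; the real work was already done in Proposition \ref{the-path-representation}.
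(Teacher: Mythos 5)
Your proposal is correct and takes essentially the same route as the paper: both build ${\normalsize \operatorname{C}}_{R}\left( \Lambda \right)$ as the free $R$-algebra on words in $\Lambda \cup G\left( \Lambda ^{\neq 0}\right)$ modulo the ideal generated by the (CP1)--(CP3) relations, obtain $\phi_{T}$ from the universal property of the free algebra, and deduce (b) by applying $\phi_{T}$ to the concrete family of Proposition \ref{the-path-representation}. No gaps.
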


\begin{proof}
Let $X:=\Lambda \cup G\left( \Lambda ^{\neq 0}\right) $ and $\mathbb{F}%
_{R}\left( w\left( X\right) \right) $ be the free algebra on the set $%
w\left( X\right) $ of words on $X$. Let $I$ be the ideal of $\mathbb{F}%
_{R}\left( w\left( X\right) \right) $ generated by elements of the following
sets:

\begin{enumerate}
\item[(i)] $\left\{ vw-\delta _{v,w}v:v,w\in \Lambda ^{0}\right\} $,

\item[(ii)] $\{\lambda -\mu \nu ,\lambda ^{\ast }-\nu ^{\ast }\mu ^{\ast
}:\lambda ,\mu ,\nu \in \Lambda $ and $\lambda =\mu \nu \}$ and

\item[(iii)] $\{\lambda ^{\ast }\mu -\sum_{(\nu ,\gamma )\in \Lambda ^{\min
}\left( \lambda ,\mu \right) }\nu \gamma ^{\ast }:\lambda ,\mu \in \Lambda
\} $.
\end{enumerate}

Set ${\normalsize \operatorname{C}}_{R}\left( \Lambda \right) :=\mathbb{F}_{R}\left(
w\left( X\right) \right) /I$ and write $q:\mathbb{F}_{R}\left( w\left(
X\right) \right) \rightarrow \mathbb{F}_{R}\left( w\left( X\right) \right)
/I $ for the quotient map. Define $t_{\lambda }:=q\left( \lambda \right) $
for $\lambda \in \Lambda $, and $t_{\mu ^{\ast }}:=q\left( \mu ^{\ast
}\right) $ for $\mu ^{\ast }\in G\left( \Lambda ^{\neq 0}\right) $. Then $%
\{t_{\lambda },t_{\mu ^{\ast }}:\lambda ,\mu \in \Lambda \}$ is a Cohn $%
\Lambda $-family in ${\normalsize \operatorname{C}}_{R}\left( \Lambda \right) $.

Now suppose that $\left\{ T_{\lambda },T_{\mu ^{\ast }}:\lambda ,u\in
\Lambda \right\} $ is a Cohn $\Lambda $-family in an $R$-algebra $A$. Define
$f:X\rightarrow A$ by $f\left( \lambda \right) :=T_{\lambda }$ for $\lambda
\in \Lambda $, and $f\left( \mu ^{\ast }\right) :=T_{\mu ^{\ast }}$ for $\mu
^{\ast }\in G\left( \Lambda ^{\neq 0}\right) $. By the universal property of
$\mathbb{F}_{R}\left( w\left( X\right) \right) $, there exists an unique $R$%
-algebra homomorphism $\pi :\mathbb{F}_{R}\left( w\left( X\right) \right)
\rightarrow A$ such that $\pi |_{X}=f$. Since $\left\{ T_{\lambda },T_{\mu
^{\ast }}:\lambda ,u\in \Lambda \right\} $ is a Cohn $\Lambda $-family, then
$I\subseteq \ker \left( \pi \right) $. Thus there exists an $R$-algebra
homomorphism $\phi _{T}:{\normalsize \operatorname{C}}_{R}\left( \Lambda \right)
\rightarrow A$ such that $\phi _{T}\circ q=\pi $. The homomorphism $\phi
_{T} $ is unique since the element in $X$ generate $\mathbb{F}_{R}\left(
w\left( X\right) \right) $ as an algebra. Furthermore, we have $\phi
_{T}\left( t_{\lambda }\right) =T_{\lambda }$ and $\phi _{T}\left( t_{\mu
^{\ast }}\right) =T_{\mu ^{\ast }}$ for $\lambda ,\mu \in \Lambda $, as
required.

For (b), suppose that $\left\{ T_{\lambda },T_{\mu ^{\ast }}:\lambda ,u\in
\Lambda \right\} $ is the Cohn $\Lambda $-family as in Proposition \ref%
{the-path-representation}. Then $rT_{v}\neq 0$ and
$r\prod_{e\in v\Lambda ^{1}}\left( T_{v}-T_{e}T_{e}^{\ast }\right)\neq 0$ for all $r\in \left. R\right\backslash
\left\{ 0\right\} $ and $v\in \Lambda ^{0}$. Since $\phi _{T}\left(
rt_{v}\right) =rT_{v}\neq 0$ and
\begin{equation*}
\phi \Big(r\prod_{e\in v\Lambda ^{1}}\left( t_{v}-t_{e}t_{e}^{\ast }\right) %
\Big)=r\prod_{e\in v\Lambda ^{1}}\left( T_{v}-T_{e}T_{e}^{\ast }\right) \neq
0
\end{equation*}%
for all $r\in \left. R\right\backslash \left\{ 0\right\} $ and $v\in \Lambda
^{0}$, then $rt_{v}\neq 0$ and $r\prod_{e\in v\Lambda ^{1}}\left(
t_{v}-t_{e}t_{e}^{\ast }\right) \neq 0$ for all $r\in \left.
R\right\backslash \left\{ 0\right\} $ and $v\in \Lambda ^{0}$.
\end{proof}

\section{The uniqueness theorem for Cohn path algebras}

\label{Section-the-uniqueness-theorem-of-CP-family}In this section, we
establish a uniqueness theorem for Cohn path algebras (Theorem \ref%
{the-uniqueness-theorem-of-CP-family}). This uniqueness theorem can be
viewed as an algebraic analogue of the uniqueness theorem for Toeplitz
algebras \cite[Theorem 8.1]{RS05}. Note that in \cite{RS05}, Raeburn and
Sims state the uniqueness theorem for Toeplitz algebras in terms of the more
general product systems of graphs over $\mathbb{N}^{k}$; thus their result
includes Toeplitz algebras associated to $k$-graphs. In \cite[Theorem 2.2]%
{P15}, Pangalela states the uniqueness theorem in the $k$-graph setting
explicitly. For further discussion, see Remark 2.3 and Remark 2.4 of \cite%
{P15}. This uniqueness theorem also does not require any hypothesis on the $%
k $-graph and thus applies generally.

\begin{theorem}[The uniqueness theorem for Cohn path algebras]
\label{the-uniqueness-theorem-of-CP-family}Let $\Lambda $ be a row-finite $k$%
-graph with no sources and $R$ be a commutative ring with $1$. Suppose that $%
\phi :{\normalsize \operatorname{C}}_{R}\left( \Lambda \right) \rightarrow A$ is %
a ring homomorphism such that
\begin{equation*}
\phi \left( rt_{v}\right) \neq 0\text{ and }\phi \Big(r\prod_{e\in v\Lambda
^{1}}\left( t_{v}-t_{e}t_{e^{\ast }}\right) \Big)\neq 0
\end{equation*}%
for all $r\in \left. R\right\backslash \left\{ 0\right\} $ and $v\in \Lambda
^{0}$. Then $\phi $ is injective.
\end{theorem}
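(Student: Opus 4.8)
The plan is to prove the theorem by transporting it across the isomorphism $\operatorname{C}_R(\Lambda)\cong\operatorname{KP}_R(T\Lambda)$ of Theorem~\ref{CP-is-isomorphic-to-KP} and then applying the Cuntz--Krieger uniqueness theorem for Kumjian--Pask algebras, \cite[Theorem 8.1]{CP15}. Let $\kappa\colon\operatorname{C}_R(\Lambda)\to\operatorname{KP}_R(T\Lambda)$ denote that isomorphism and put $\psi:=\phi\circ\kappa^{-1}$. Since $\kappa$ is an isomorphism, $\phi=\psi\circ\kappa$ is injective if and only if $\psi$ is, so it suffices to prove that $\psi$ is injective.

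First I would identify, under $\kappa$, the vertex idempotents $\{p_w:w\in(T\Lambda)^0\}$ of the generating Kumjian--Pask family of $\operatorname{KP}_R(T\Lambda)$. The vertices of $T\Lambda$ fall into two families: those arising from the original vertices $v\in\Lambda^0$, and the new source vertices adjoined in the construction of $T\Lambda$. The explicit generator correspondence of Theorem~\ref{CP-is-isomorphic-to-KP} sends the former to the elements $t_v$ and the latter to the gap idempotents $\prod_{e\in v\Lambda^1}(t_v-t_e t_{e^*})$ (which are well defined because the $t_e t_{e^*}$ commute by Proposition~\ref{properties-of-CP}(a)). Hence the two hypotheses imposed on $\phi$ say precisely that $\psi(r\,p_w)\neq 0$ for every $r\in R\setminus\{0\}$ and every $w\in(T\Lambda)^0$; this is exactly the vertex hypothesis required by \cite[Theorem 8.1]{CP15}. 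The point to verify here is that these two families together exhaust $(T\Lambda)^0$, with no vertex left unaccounted for.

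Next I would check that $T\Lambda$ is aperiodic, which is the hypothesis needed to apply \cite[Theorem 8.1]{CP15}. This is where the construction of $T\Lambda$ does the essential work: the tails adjoined at each vertex in forming $T\Lambda$ destroy any periodicity that $\Lambda$ might have, so that $T\Lambda$ is aperiodic regardless of $\Lambda$ --- this is the structural fact recorded in Proposition~\ref{KP-TLambda-when-lambda-has-no-sources}, and it is what ultimately allows the uniqueness theorem to hold with no hypothesis on $\Lambda$. With aperiodicity established and the vertex condition verified above, \cite[Theorem 8.1]{CP15} applies to $\psi$ directly and yields that $\psi$ is injective, whence $\phi$ is injective.

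I expect the main obstacle to lie in the bookkeeping of the first step, namely confirming that the explicit isomorphism of Theorem~\ref{CP-is-isomorphic-to-KP} matches up $\{t_v\}$ and the gap idempotents $\{\prod_{e\in v\Lambda^1}(t_v-t_e t_{e^*})\}$ with the two kinds of vertex idempotents of $T\Lambda$ bijectively. Once this correspondence is pinned down, importing aperiodicity from Proposition~\ref{KP-TLambda-when-lambda-has-no-sources} and invoking \cite[Theorem 8.1]{CP15} is routine.
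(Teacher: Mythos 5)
Your overall route is the same as the paper's: transport $\phi$ across the isomorphism $\operatorname{C}_R(\Lambda)\cong\operatorname{KP}_R(T\Lambda)$ of Theorem~\ref{CP-is-isomorphic-to-KP} and invoke the Cuntz--Krieger uniqueness theorem for Kumjian--Pask algebras (the paper packages this slightly differently, building a Kumjian--Pask $T\Lambda$-family directly out of $\{\phi(t_\lambda),\phi(t_{\mu^\ast})\}$ via Lemma~\ref{S-is-a-KP-family} and then checking $\phi=\pi_S\circ\pi^{-1}$, but that is the same argument). However, the step you yourself identify as the crux --- matching the vertex idempotents of $T\Lambda$ with elements of $\operatorname{C}_R(\Lambda)$ --- is carried out incorrectly, and as stated it leaves a real gap. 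Under the isomorphism $\pi$ of Theorem~\ref{CP-is-isomorphic-to-KP}, the vertex $\alpha(v)$ is sent to $S_{\alpha(v)}=t_vF_{t,v}=F_{t,v}=t_v-\prod_{e\in v\Lambda^1}(t_v-t_et_{e^\ast})$, not to $t_v$; it is only the \emph{sum} $\pi(s_{\alpha(v)})+\pi(s_{\beta(v)})$ that equals $t_v$ (see \eqref{eq-surjectivity-t}). Consequently the two hypotheses on $\phi$ do \emph{not} ``say precisely'' that $\psi(rp_w)\neq 0$ for all $w\in T\Lambda^0$: the hypothesis $\phi\big(r\prod_{e}(t_v-t_et_{e^\ast})\big)\neq 0$ does give $\psi(rp_{\beta(v)})\neq 0$, but $\phi(rt_v)\neq 0$ does not immediately give $\psi(rp_{\alpha(v)})=rF_{\phi(t),v}\neq 0$, since $rt_v\neq 0$ says nothing a priori about the difference $rt_v-r\prod_e(t_v-t_et_{e^\ast})$.

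This gap is fixable, and the paper fills it with Lemma~\ref{F_t,v}(d): for a Cohn $\Lambda$-family $\{T_\lambda,T_{\mu^\ast}\}$, one has $rF_{T,v}\neq 0$ for all $r\in R\setminus\{0\}$ and $v\in\Lambda^0$ if and only if $rT_v\neq 0$ for all such $r$ and $v$. Note that this is genuinely a global statement --- the proof shows that $rF_{T,v}=0$ forces $rT_{s(f)}=0$ for $f\in v\Lambda^1$, a possibly \emph{different} vertex --- so it cannot be argued vertex by vertex; you need the hypothesis at all vertices simultaneously. Applying this lemma to the Cohn family $\{\phi(t_\lambda),\phi(t_{\mu^\ast})\}$ converts your first hypothesis into the missing condition $\psi(rp_{\alpha(v)})\neq 0$, and the rest of your plan then goes through. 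Two smaller points: aperiodicity of $T\Lambda$ is not the content of Lemma~\ref{KP-TLambda-when-lambda-has-no-sources} (which characterises when a Cohn $T\Lambda$-family satisfies (KP)); it is quoted from \cite[Proposition 3.4]{P15}. And you should record why $\psi=\phi\circ\pi$ agrees with what you think it does on generators, i.e.\ the computation $(\pi_S\circ\pi^{-1})(t_\lambda)=\phi(t_\lambda)$ using $t_\lambda=\pi(s_{\alpha(\lambda)})+\pi(s_{\beta(\lambda)})$, though in your formulation this is automatic from the definition of $\psi$.
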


The rest of this section is devoted to proving Theorem \ref%
{the-uniqueness-theorem-of-CP-family}. To help readers follow our proofs, we
divide the arguments into three subsections. In Subsection \ref%
{Subsection-KP-algebras}, we recall the Kumjian-Pask $\Lambda$-families of
\cite{CP15} and study some of their properties. In Subsection \ref%
{Subsection-k-graph-TLambda}, we recall the $k$-graph $T\Lambda $ of \cite%
{P15} and investigate the Kumjian-Pask algebra of $T\Lambda $. Finally, in
Subsection \ref{Subsection-Cohn-path-algebras-and-KP-algebras}, we show that
every Cohn $\Lambda $-family is isomorphic to the Kumjian-Pask $T\Lambda $%
-family (Theorem \ref{CP-is-isomorphic-to-KP}). Once we have this
isomorphism, we show that Theorem \ref{the-uniqueness-theorem-of-CP-family}
is a consequence of the Cuntz-Krieger uniqueness theorem for Kumjian-Pask
algebras \cite[Theorem 8.1]{CP15}.

\subsection{Kumjian-Pask algebras}

\label{Subsection-KP-algebras}Suppose that $\Lambda $ is a row-finite $k$%
-graph. Recall from \cite[Definition 3.1]{CP15} that a \emph{Kumjian-Pask }$%
\Lambda $\emph{-family} $\left\{ S_{\lambda },S_{\mu ^{\ast }}:\lambda ,u\in
\Lambda \right\} $ in an $R$-algebra $A$ is a family which satisfies (CP1-3)
and

\begin{enumerate}
\item[(KP)] $\prod_{\lambda \in E}\left( S_{v}-S_{\lambda }S_{\lambda ^{\ast
}}\right) =0$ for all $v\in \Lambda ^{0}$ and finite exhaustive $E\subseteq
v\Lambda .$
\end{enumerate}

\begin{remark}
We are careful to not say that a Kumjian-Pask $\Lambda $-family is a Cohn $%
\Lambda $-family which satisfies (KP). This is because in Definition \ref%
{CP-family}, we define Cohn $\Lambda $-family of row-finite $k$-graphs with
no sources; however, the above definition of Kumjian-Pask $\Lambda $-family
allows for more general row-finite $k$-graphs (in particular, to $k$-graphs
with sources). We will need this level of generality later on.
\end{remark}

For a row-finite $k$-graph $\Lambda $, there exists an $R$-algebra $%
{\normalsize \operatorname{KP}}_{R}\left( \Lambda \right) $ generated by the
universal Kumjian-Pask $\Lambda $-family $\left\{ s_{\lambda },s_{\mu ^{\ast
}}:\lambda ,u\in \Lambda \right\} $.

\begin{remark}
For a row-finite $k$-graph $\Lambda $ with no sources, the set $v\Lambda
^{e_{i}}$ is exhaustive for all $v\in \Lambda ^{0}$ and $1\leq i\leq k$ (see
\cite[Proof of Lemma B.2]{RSY04}). Hence, ${\normalsize \operatorname{KP}}_{R}\left(
\Lambda \right) $ is a nontrivial quotient of ${\normalsize \operatorname{C}}%
_{R}\left( \Lambda \right) $ and then ${\normalsize \operatorname{C}}_{R}\left(
\Lambda \right) $ is not simple.
\end{remark}

There is a powerful uniqueness theorem for Kumjian-Pask algebras, called the
Cuntz-Krieger uniqueness theorem which we prove Theorem 8.1 of \cite{CP15}.
Since we shall use this theorem in the proof of Theorem \ref%
{the-uniqueness-theorem-of-CP-family} (see Subsection \ref%
{Subsection-Cohn-path-algebras-and-KP-algebras}), we state it explicitly:

\begin{theorem}
\label{the-CK-uniqueness-theorem}Let $\Lambda $ be a row-finite $k$-graph
which satisfies the \emph{aperiodicity condition}:
\begin{align*}
\text{for every pair of distinct paths }\lambda ,\mu & \in \Lambda \text{
with }s\left( \lambda \right) =s\left( \mu \right) \text{,} \\
\text{there exists }\eta & \in s\left( \lambda \right) \Lambda \text{ such
that }\operatorname{MCE}\left( \lambda \eta ,\mu \eta \right) =\emptyset \text{.}
\end{align*}%
Let $R$ be a commutative ring with $1$. Suppose that $\pi :{\normalsize
\operatorname{KP}}_{R}\left( \Lambda \right) \rightarrow A$ is a ring homomorphism
such that $\pi \left( rs_{v}\right) \neq 0$ for all $r\in \left.
R\right\backslash \left\{ 0\right\} $ and $v\in \Lambda ^{0}$. Then $\pi $
is injective.
\end{theorem}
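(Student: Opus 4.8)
The plan is to derive injectivity from a \emph{reduction theorem}: for every nonzero $a\in\operatorname{KP}_{R}(\Lambda)$ there exist $\mu,\nu\in\Lambda$, a vertex $v$, and a nonzero $r\in R$ with $s_{\mu^{\ast}}\,a\,s_{\nu}=r\,s_{v}$. Granting this, the theorem is immediate: if $0\neq a\in\ker\pi$, choose such $\mu,\nu,v,r$; then applying $\pi$ to the identity $r s_{v}=s_{\mu^{\ast}}\,a\,s_{\nu}$ gives $\pi(r s_{v})=\pi(s_{\mu^{\ast}})\,\pi(a)\,\pi(s_{\nu})=0$, contradicting the hypothesis $\pi(r s_{v})\neq0$. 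So the entire content lies in the reduction theorem, and this is the only place aperiodicity is used.

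To prove the reduction theorem I would start from the Kumjian--Pask analogue of the spanning description in Proposition \ref{properties-of-CP}(b), writing $a=\sum_{j}r_{j}\,s_{\alpha_{j}}s_{\beta_{j}^{\ast}}$ as a finite sum with $s(\alpha_{j})=s(\beta_{j})$, and use the $\mathbb{Z}^{k}$-grading of $\operatorname{KP}_{R}(\Lambda)$ in which $s_{\alpha}s_{\beta^{\ast}}$ has degree $d(\alpha)-d(\beta)$. The first step is a purely graded \emph{maximal-degree} reduction producing a nonzero diagonal term: choosing a term with $d(\alpha_{j})$ maximal and compressing $a\mapsto s_{\alpha_{j}^{\ast}}\,a\,s_{\beta_{j}}$ turns that term into the vertex $s_{s(\beta_{j})}$ via $s_{\alpha^{\ast}}s_{\alpha}=s_{s(\alpha)}$, with maximality preventing cancellation. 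Cornering the result, i.e.\ replacing $a$ by $s_{v}\,a\,s_{v}$ at the relevant vertex $v$, I may then assume $a=r\,s_{v}+\sum_{\mu\neq\nu}r_{\mu\nu}\,s_{\mu}s_{\nu^{\ast}}$ with $r\neq0$ and $r(\mu)=r(\nu)=v$ for every off-diagonal term.

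The decisive step uses aperiodicity to delete the off-diagonal terms. For an off-diagonal pair $(\mu,\nu)$ with $s(\mu)=s(\nu)$, aperiodicity supplies $\eta\in s(\mu)\Lambda$ with $\operatorname{MCE}(\mu\eta,\nu\eta)=\emptyset$, hence $\Lambda^{\min}(\mu\eta,\nu\eta)=\emptyset$ and $s_{(\nu\eta)^{\ast}}s_{\mu\eta}=0$ by (CP3). Compressing by $s_{(\nu\eta)^{\ast}}$ on the left and $s_{\nu\eta}$ on the right then kills this term, since
\begin{equation*}
s_{(\nu\eta)^{\ast}}\,(s_{\mu}s_{\nu^{\ast}})\,s_{\nu\eta}=s_{(\nu\eta)^{\ast}}\,s_{\mu}\,s_{s(\nu)}\,s_{\eta}=s_{(\nu\eta)^{\ast}}\,s_{\mu\eta}=0,
\end{equation*}
while the diagonal term survives as $s_{(\nu\eta)^{\ast}}(r\,s_{v})s_{\nu\eta}=r\,s_{s(\eta)}\neq0$. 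Iterating such compressions to remove all off-diagonal terms, and finally selecting a single vertex summand (the $s_{v}$ are orthogonal idempotents), reduces $a$ to a nonzero $r\,s_{v}$, establishing the reduction theorem.

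I expect the main obstacle to be precisely this last iteration: a compression tailored to the pair $(\mu,\nu)$ generically transforms the \emph{other} off-diagonal terms into new sums via (CP3), so one cannot naively induct on the number of terms. The fix is to organise the induction by a degree-complexity measure coming from the $\mathbb{Z}^{k}$-grading, clearing terms of extremal degree difference first and checking, through the (CP1)--(CP3) expansion of $s_{(\nu\eta)^{\ast}}\,s_{\mu'}s_{\nu'^{\ast}}\,s_{\nu\eta}$, that each compression strictly decreases this measure while never annihilating the distinguished diagonal vertex term. Controlling this bookkeeping so that the process terminates at a nonzero scalar multiple of a single vertex is the technical heart of the argument; everything else reduces to the short contradiction in the first paragraph.
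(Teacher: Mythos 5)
You should first be aware that the paper does not prove this theorem at all: it is quoted verbatim from \cite[Theorem 8.1]{CP15} and used as a black box, so there is no in-paper proof to match. In \cite{CP15} the result is obtained by a different route (a graded uniqueness theorem combined with the realisation of the Kumjian--Pask algebra as the Steinberg algebra of the boundary-path groupoid, whose effectiveness encodes aperiodicity), not by the direct ``reduction theorem'' you propose. Your overall architecture --- corner every nonzero $a$ down to a nonzero $r s_{v}$, then conclude in one line --- is the standard Leavitt-path-algebra strategy and the first paragraph of your argument is correct as far as it goes. But the entire content is the reduction theorem, and your sketch of it has two genuine gaps.

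First, the ``maximal-degree'' compression does not work as stated for $k\geq 2$. The degrees $d(\alpha_{j})$ lie in $\mathbb{N}^{k}$, which is only partially ordered, so a maximal term is not a maximum; for a term $\alpha_{i}$ with $d(\alpha_{i})$ incomparable to $d(\alpha_{j})$, the product $s_{\alpha_{j}^{\ast}}s_{\alpha_{i}}$ is a sum over $\Lambda^{\min}(\alpha_{j},\alpha_{i})$ by (CP3), not zero, and its degree-zero contributions can cancel the distinguished $r\,s_{s(\alpha_{j})}$. The usual repair --- use (KP) to rewrite $s_{\alpha}s_{\beta^{\ast}}=\sum_{\lambda\in s(\alpha)\Lambda^{N-d(\alpha)}}s_{\alpha\lambda}s_{(\beta\lambda)^{\ast}}$ so that all $\alpha_{i}$ have a common degree $N$ --- is unavailable here, because the theorem is stated for row-finite $k$-graphs that may have sources (indeed the paper applies it to $T\Lambda$, which always has sources, where $s(\alpha)\Lambda^{m}$ need not be exhaustive). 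Second, the iterative deletion of off-diagonal terms is exactly the step you flag as ``the technical heart,'' and you do not supply it: each compression by $s_{(\nu\eta)^{\ast}}(\cdot)s_{\nu\eta}$ re-expands the surviving terms via (CP3) into new sums, possibly producing new degree-zero diagonal pieces that interfere with $r\,s_{s(\eta)}$, and no degree-complexity measure is exhibited that demonstrably decreases. In the literature this is resolved not by iteration but by handling all finitely many off-diagonal pairs \emph{simultaneously}: aperiodicity is reformulated via an aperiodic boundary path at $v$, which yields a single $\tau\in v\Lambda$ with $s_{\tau^{\ast}}s_{\mu}s_{\nu^{\ast}}s_{\tau}=0$ for every pair in the given finite set at once. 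Without that (or an honest termination argument for your iteration), the proposal is a correct plan rather than a proof.
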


\begin{remark}
Our aperiodicity condition is from \cite[Definition 3.1]{LS10} and applies
to very general (not necessarily row-finite) $k$-graphs. In the setting of
row-finite higher-rank graphs, our definition is equivalent to various
aperiodicity definitions, including Condition (B$^{\prime }$) of \cite[%
Remark 7.3]{FMY05} and \cite[Definition 2.1.(ii)]{Sh12} (see \cite{LS10,
RS07, RS09,Sh12}).
\end{remark}

\begin{remark}
The aperiodicity condition is a higher-rank analogue of Condition (L) for $1
$-graphs. Using our path convention, Condition (L)\ says that every cycle
has an entry (see \cite{A15,Leavitt path algebras,CBMS}).
\end{remark}

The following proposition will be useful to simplify calculations in
Kumjian-Pask algebras. In essence gives an alternate formulation of (KP).

\begin{proposition}
\label{KP-family-on-edges-KP}Let $\Lambda $ be a row-finite $k$-graph and $R$
be a commutative ring with $1$. Suppose that $\left\{ S_{\lambda },S_{\mu
^{\ast }}:\lambda ,u\in \Lambda \right\} $ is a Cohn $\Lambda $-family in an
$R$-algebra $A$. Then
\begin{equation*}
\left\{ S_{\lambda },S_{\mu ^{\ast }}:\lambda ,u\in \Lambda
\right\}
\end{equation*}
is a Kumjian-Pask $\Lambda $-family if and only if
\begin{equation*}
\prod_{e\in E}\left( S_{v}-S_{e}S_{e^{\ast }}\right) =0
\end{equation*}%
for all $v\in \Lambda ^{0}$ and exhaustive $E\subseteq v\Lambda ^{1}.$
\end{proposition}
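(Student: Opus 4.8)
The plan is to prove the equivalence by showing that the condition on edges is just a special case of (KP), and then bootstrapping the general (KP) condition from the edge condition. The forward direction is immediate: if the family is a Kumjian-Pask $\Lambda$-family, then by the Remark above, every set $E\subseteq v\Lambda^{1}$ of the form $v\Lambda^{e_i}$ is finite exhaustive, and more generally any exhaustive $E\subseteq v\Lambda^{1}$ is a finite exhaustive subset of $v\Lambda$ (finiteness follows from row-finiteness, since $E\subseteq v\Lambda^{1}=\bigcup_{i} v\Lambda^{e_i}$ and each $v\Lambda^{e_i}$ is finite). So (KP) applied to this $E$ gives exactly $\prod_{e\in E}(S_{v}-S_{e}S_{e^{\ast}})=0$. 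The real content is the converse.

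For the converse, I would assume the edge condition holds and derive full (KP), namely $\prod_{\lambda\in E}(S_{v}-S_{\lambda}S_{\lambda^{\ast}})=0$ for an arbitrary finite exhaustive $E\subseteq v\Lambda$. First I would record two algebraic facts about the commuting idempotents $Q_{\lambda}:=S_{\lambda}S_{\lambda^{\ast}}$: by Proposition \ref{properties-of-CP}(a) they commute, and $S_{v}-Q_{\lambda}$ are commuting idempotents as well. The key structural observation I want to exploit is that for an edge $e\in v\Lambda^{e_i}$ with $s(e)=r(\mu)$, one has $S_{e}S_{e^{\ast}}S_{e\mu}S_{(e\mu)^{\ast}}=S_{e\mu}S_{(e\mu)^{\ast}}$, i.e.\ $Q_{e}Q_{e\mu}=Q_{e\mu}$, so $(S_{v}-Q_{e})Q_{e\mu}=0$. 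More usefully, using (CP2) and (CP3) I would establish the factorisation-type identity $Q_{\lambda}=S_{\lambda(0,e_i)}\big(\sum_{e'\in s(\lambda(0,e_i))\Lambda^{?}}\cdots\big)$ that lets me ``peel off'' a generation of edges, reducing the degree of the paths appearing in $E$.

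The heart of the argument is an induction on a suitable measure of $E$, say on $N(E):=\max\{|d(\lambda)|:\lambda\in E\}$ where $|n|=n_1+\cdots+n_k$. The base case $N(E)\le 1$ is handled by splitting $E$ according to range vertices and the colour of the leading edge; after discarding vertices in $E$ (which make the corresponding factor $S_v-S_vS_{v^*}=0$) the remaining paths are edges, and one reduces to the edge condition, possibly using that an exhaustive set containing all edges of some colour out of $v$ contains an exhaustive subset in a single $v\Lambda^{e_i}$. For the inductive step, given $E$ with $N(E)\ge 2$, I would pick a path $\lambda\in E$ of maximal degree, factor it as $\lambda=\lambda(0,e_i)\lambda(e_i,d(\lambda))$ for an appropriate $i$, and use the edge relation at the vertex $r(\lambda)$ together with the identity $\prod_{e\in r(\lambda)\Lambda^{e_i}}(S_v-Q_e)=0$ to rewrite the product, thereby replacing $\lambda$ by a collection of strictly shorter paths while preserving exhaustiveness of the modified family. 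This is the step I expect to be the main obstacle: making precise how ``shortening'' one path in $E$ yields a new finite set that is still exhaustive at $v$ and whose associated product still vanishes, so that the induction hypothesis applies. Carefully managing the MCE/Ext bookkeeping from (CP3) — to verify that the new set is genuinely exhaustive — is where the combinatorial work lies. Once the inductive step is in place, the equivalence follows and the proof is complete.
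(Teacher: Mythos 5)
Your forward direction is fine, and your overall plan for the converse (induction on a degree-type measure of $E$, with the edge hypothesis as base case) is the right shape and matches the spirit of the paper's proof, which follows \cite[Proposition C.3]{RSY04}. But the inductive step --- which you yourself flag as ``the main obstacle'' --- is exactly where all the mathematical content of the proposition lives, and your proposal does not resolve it. In particular, your plan to ``replace $\lambda$ by a collection of strictly shorter paths while preserving exhaustiveness of the modified family'' is not a step that can be made to work as stated: there is no straightforward way to delete a path from an exhaustive set and insert shorter paths so that the new set is still exhaustive \emph{at $v$} and the old product is controlled by the new one. The paper avoids modifying $E$ altogether.

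The two missing ingredients are these. First, a stepping-stone identity (Lemma \ref{KP-family-on-edges-stepstone-lemma} in the paper): if $F\subseteq s(\lambda)\Lambda$ is finite with $\prod_{\nu\in F}\left(S_{s(\lambda)}-S_{\nu}S_{\nu^{\ast}}\right)=0$, then $S_{v}-S_{\lambda}S_{\lambda^{\ast}}=\prod_{\nu\in F}\left(S_{v}-S_{\lambda\nu}S_{(\lambda\nu)^{\ast}}\right)$. Second, the combinatorial apparatus of $I(E)$ (the set of initial edges of paths in $E$) and $\operatorname{Ext}(e;E)$, together with the facts from \cite[Lemmas C.5, C.6, C.8]{RSY04} that $I(E)\subseteq v\Lambda^{1}$ is exhaustive, that $\operatorname{Ext}(e;E)$ is exhaustive at $s(e)$, and that $L(\operatorname{Ext}(e;E))<L(E)$ for the measure $L(E)=\sum_{i}\max_{\lambda\in E}d(\lambda)_{i}$. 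The induction is then applied to $\operatorname{Ext}(e;E)$ at the \emph{new} vertex $s(e)$ (not to a modified set at $v$), the stepping-stone lemma converts this into $S_{v}-S_{e}S_{e^{\ast}}=\prod_{\nu\in\operatorname{Ext}(e;E)}\left(S_{v}-S_{e\nu}S_{(e\nu)^{\ast}}\right)$, and one finishes by multiplying the original product $\prod_{\lambda\in E}\left(S_{v}-S_{\lambda}S_{\lambda^{\ast}}\right)$ by the harmless absorbing factors $\prod_{e\in I(E)}\prod_{\nu\in\operatorname{Ext}(e;E)}\left(S_{v}-S_{e\nu}S_{(e\nu)^{\ast}}\right)$ and invoking the edge hypothesis on the exhaustive set $I(E)$. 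Without these two ingredients your induction cannot close, so as written the proposal has a genuine gap at its central step.
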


Before proving Proposition \ref{KP-family-on-edges-KP}, we establish the
following helper lemma.

\begin{lemma}
\label{KP-family-on-edges-stepstone-lemma}Let $\Lambda $ be a row-finite $k$%
-graph and $R$ be a commutative ring with $1$. Suppose that
$\left\{ S_{\lambda },S_{\mu ^{\ast }}:\lambda ,u\in \Lambda \right\} $is a
Cohn $\Lambda $-family in an $R$-algebra $A$. Suppose $v\in \Lambda ^{0}$, $%
\lambda \in v\Lambda $ and $E\subseteq s\left( \lambda \right) \Lambda $ is
finite and satisfies $\prod_{\nu \in E}\left( S_{s\left( \lambda \right)
}-S_{\nu }S_{\nu ^{\ast }}\right) =0$. Then%
\begin{equation*}
S_{v}-S_{\lambda }S_{\lambda ^{\ast }}=\prod_{\nu \in E}\left(
S_{v}-S_{\lambda \nu }S_{\left( \lambda \nu \right) ^{\ast }}\right) \text{.}
\end{equation*}
\end{lemma}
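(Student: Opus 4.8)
The plan is to reduce both sides to products of commuting idempotents and then to identify the right-hand side by an orthogonality/expansion argument. Throughout I would write $P_\mu := S_\mu S_{\mu^\ast}$; by Proposition \ref{properties-of-CP}(a) the family $\{P_\mu : \mu \in \Lambda\}$ is commuting, and each $P_\mu$ is idempotent. The one preliminary computation I would record is $S_{\lambda^\ast} S_\lambda = S_{s(\lambda)}$: this follows from (CP3) together with $\Lambda^{\min}(\lambda,\lambda) = \{(s(\lambda),s(\lambda))\}$, using that $s(\lambda)$ is a vertex. With this in hand I would introduce the $R$-linear map $\Phi \colon A \to A$, $\Phi(x) := S_\lambda x S_{\lambda^\ast}$.

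First I would check that $\Phi$ is multiplicative on the corner $S_{s(\lambda)} A S_{s(\lambda)}$: for $x,y$ in this corner, $\Phi(x)\Phi(y) = S_\lambda x S_{\lambda^\ast} S_\lambda y S_{\lambda^\ast} = S_\lambda x S_{s(\lambda)} y S_{\lambda^\ast} = S_\lambda (xy) S_{\lambda^\ast} = \Phi(xy)$, where the middle step uses $S_{\lambda^\ast} S_\lambda = S_{s(\lambda)}$ and $x S_{s(\lambda)} = x$. Next I would compute the relevant values $\Phi(S_{s(\lambda)}) = S_\lambda S_{\lambda^\ast} = P_\lambda$ and, for $\nu \in s(\lambda)\Lambda$, $\Phi(S_\nu S_{\nu^\ast}) = S_\lambda S_\nu S_{\nu^\ast} S_{\lambda^\ast} = S_{\lambda\nu} S_{(\lambda\nu)^\ast} = P_{\lambda\nu}$ via (CP2). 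Since each factor $S_{s(\lambda)} - S_\nu S_{\nu^\ast}$ lies in this corner and these factors commute, applying the (corner-)homomorphism $\Phi$ to the hypothesis $\prod_{\nu \in E}(S_{s(\lambda)} - S_\nu S_{\nu^\ast}) = 0$ yields $\prod_{\nu \in E}(P_\lambda - P_{\lambda\nu}) = 0$.

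It then remains to pass from $\prod_{\nu \in E}(P_\lambda - P_{\lambda\nu}) = 0$ to the claimed identity. Here I would record the idempotent relations $S_v P_\lambda = P_\lambda$ and $S_v P_{\lambda\nu} = P_{\lambda\nu}$ (as $r(\lambda) = r(\lambda\nu) = v$) and $P_\lambda P_{\lambda\nu} = P_{\lambda\nu}$ (which again uses $S_{\lambda^\ast} S_\lambda = S_{s(\lambda)}$), all taking place inside the commuting family that also contains $S_v = P_v$. The key observation is that $S_v - P_\lambda$ is idempotent and orthogonal to each $P_\lambda - P_{\lambda\nu}$, that is $(S_v - P_\lambda)(P_\lambda - P_{\lambda\nu}) = 0$, which is a one-line check from the relations above. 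Writing $S_v - P_{\lambda\nu} = (S_v - P_\lambda) + (P_\lambda - P_{\lambda\nu})$ and expanding the commuting product $\prod_{\nu\in E}(S_v - P_{\lambda\nu})$, every mixed term is annihilated by this orthogonality, so only the all-$(S_v - P_\lambda)$ term and the all-$(P_\lambda - P_{\lambda\nu})$ term survive; by idempotency of $S_v - P_\lambda$ these equal $S_v - P_\lambda$ and, by the relation just proved, $0$ respectively. Hence $\prod_{\nu\in E}(S_v - P_{\lambda\nu}) = S_v - P_\lambda$, as required.

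I expect the main obstacle to be bookkeeping rather than conceptual: ensuring every product is taken over genuinely commuting idempotents (so that order is irrelevant and $\Phi$ distributes over the product as stated), and verifying that the factors $S_{s(\lambda)} - S_\nu S_{\nu^\ast}$ genuinely lie in the corner $S_{s(\lambda)} A S_{s(\lambda)}$ where $\Phi$ is multiplicative. The empty-$E$ case is degenerate and is handled by the usual convention that the empty product is the relevant vertex idempotent.
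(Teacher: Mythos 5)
Your proof is correct, and it reaches the conclusion by a slightly different route than the paper. The first half is essentially identical in content: the paper also conjugates the hypothesis by $S_{\lambda }$, writing $S_{\lambda }\bigl(\prod_{\nu \in E}(S_{s(\lambda )}-S_{\nu }S_{\nu ^{\ast }})\bigr)S_{\lambda ^{\ast }}=\prod_{\nu \in E}(S_{\lambda }S_{\lambda ^{\ast }}-S_{\lambda \nu }S_{(\lambda \nu )^{\ast }})$ using exactly the facts you isolate ($S_{\lambda ^{\ast }}S_{\lambda }=S_{s(\lambda )}$ from (CP3) and the composition rule (CP2)); your packaging of this as a multiplicative corner map $\Phi $ is just a cleaner way of saying the same thing. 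The genuine divergence is in the endgame. Writing $P_{\mu }:=S_{\mu }S_{\mu ^{\ast }}$, the paper evaluates the single expression $(S_{v}-P_{\lambda })\prod_{\nu \in E}(S_{v}-P_{\lambda \nu })$ in two ways: the absorption identity $(S_{v}-P_{\lambda })(S_{v}-P_{\lambda \nu })=S_{v}-P_{\lambda }$ shows it equals the left-hand side of the lemma, while distributing $S_{v}$ and $P_{\lambda }$ across the product and using $\prod_{\nu \in E}(P_{\lambda }-P_{\lambda \nu })=0$ shows it equals the right-hand side. You instead expand $\prod_{\nu \in E}(S_{v}-P_{\lambda \nu })$ directly via the decomposition $S_{v}-P_{\lambda \nu }=(S_{v}-P_{\lambda })+(P_{\lambda }-P_{\lambda \nu })$ and kill the mixed terms by the orthogonality $(S_{v}-P_{\lambda })(P_{\lambda }-P_{\lambda \nu })=0$. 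Both arguments rest on the same absorption relations among the commuting idempotents $S_{v}$, $P_{\lambda }$, $P_{\lambda \nu }$; the paper's version sidesteps the $2^{|E|}$-term bookkeeping of your expansion, while yours makes the underlying mechanism (a sum of two orthogonal idempotent-like pieces, one of which has vanishing product over $E$) more transparent. Your treatment of $E=\emptyset $ by convention is harmless: the paper is equally silent on it, and in the only place the lemma is used the set $\operatorname{Ext}(e;E)$ is exhaustive, hence nonempty.
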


\begin{proof}
We follow the $C^{\ast }$-algebraic argument of \cite[Lemma C.7]{RSY04}. For
$\nu \in s\left( \lambda \right) \Lambda $, we have%
\begin{equation*}
\left( S_{v}-S_{\lambda }S_{\lambda ^{\ast }}\right) \left( S_{v}-S_{\lambda
\nu }S_{\left( \lambda \nu \right) ^{\ast }}\right) =S_{v}-S_{\lambda
}S_{\lambda ^{\ast }}\text{;}
\end{equation*}%
so%
\begin{equation*}
\left( S_{v}-S_{\lambda }S_{\lambda ^{\ast }}\right) \prod_{\nu \in E}\left(
S_{v}-S_{\lambda \nu }S_{\left( \lambda \nu \right) ^{\ast }}\right)
=S_{v}-S_{\lambda }S_{\lambda ^{\ast }}.
\end{equation*}%
On the other hand,%
\begin{align*}
\left( S_{v}-S_{\lambda }S_{\lambda ^{\ast }}\right) \prod_{\nu \in E}\left(
S_{v}-S_{\lambda \nu }S_{\left( \lambda \nu \right) ^{\ast }}\right) &
=S_{v}\prod_{\nu \in E}\left( S_{v}-S_{\lambda \nu }S_{\left( \lambda \nu
\right) ^{\ast }}\right) -S_{\lambda }S_{\lambda ^{\ast }}\prod_{\nu \in
E}\left( S_{v}-S_{\lambda \nu }S_{\left( \lambda \nu \right) ^{\ast }}\right)
\\
& =\prod_{\nu \in E}\left( S_{v}-S_{\lambda \nu }S_{\left( \lambda \nu
\right) ^{\ast }}\right) -\prod_{\nu \in E}\left( S_{\lambda }S_{\lambda
^{\ast }}-S_{\lambda \nu }S_{\left( \lambda \nu \right) ^{\ast }}\right) \\
& =\prod_{\nu \in E}\left( S_{v}-S_{\lambda \nu }S_{\left( \lambda \nu
\right) ^{\ast }}\right) -S_{\lambda }\big(\prod_{\nu \in E}\left(
S_{s\left( \lambda \right) }-S_{\nu }S_{\nu ^{\ast }}\right) \big)S_{\lambda
^{\ast }} \\
& =\prod_{\nu \in E}\left( S_{v}-S_{\lambda \nu }S_{\left( \lambda \nu
\right) ^{\ast }}\right)
\end{align*}%
since $\prod_{\nu \in E}\left( S_{s\left( \lambda \right) }-S_{\nu }S_{\nu
^{\ast }}\right) =0$ by the hypothesis. The conclusion follows.
\end{proof}

\begin{proof}[Proof of Proposition \protect\ref{KP-family-on-edges-KP}]
We use a similar argument to the $C^{\ast }$-algebraic version in \cite[%
Proposition C.3]{RSY04}. If $\{S_{\lambda },S_{\mu ^{\ast }}:\lambda ,u\in
\Lambda \}$ is a Kumjian-Pask $\Lambda $-family, then it satisfies $%
\prod_{e\in E}\left( S_{v}-S_{e}S_{e^{\ast }}\right) =0$ for all $v\in
\Lambda ^{0}$ and exhaustive set $E\subseteq v\Lambda ^{1}.$ Now we show the
reverse implication. First for $E\subseteq \Lambda $, we write%
\begin{equation*}
I\left( E\right) :=\bigcup_{i=1}^{k}\left\{ \lambda \left( 0,e_{i}\right)
:\lambda \in E,d\left( \lambda \right) _{i}>0\right\} \text{ and }L\left(
E\right) :=\sum_{i=1}^{k}\max_{\lambda \in E}d\left( \lambda \right) _{i}%
\text{.}
\end{equation*}%
We have to show $\prod_{\lambda \in E}\left( S_{v}-S_{\lambda }S_{\lambda
^{\ast }}\right) =0$ for all $v\in \Lambda ^{0}$ and exhaustive set $%
E\subseteq v\Lambda $. We show this by induction on $L\left( E\right) $. If $%
L\left( E\right) =1$, then $E\subseteq v\Lambda ^{1}$ for some $v\in \Lambda
^{0}$ and $\prod_{e\in E}\left( S_{v}-S_{e}S_{e^{\ast }}\right) =0$ by
assumption.

Now fix $l\geq 1$ and suppose that $\prod_{\lambda \in F}\left(
S_{v}-S_{\lambda }S_{\lambda ^{\ast }}\right) =0$ for all $v\in \Lambda ^{0}$
and exhaustive set $F\subseteq v\Lambda $ with $L\left( F\right) \leq l$.
Take $v\in \Lambda ^{0}$ and exhaustive set $E\subseteq v\Lambda $ with $%
L\left( E\right) =l+1$. If $v\in E$, then $\prod_{\lambda \in E}\left(
S_{v}-S_{\lambda }S_{\lambda ^{\ast }}\right) =0$. So suppose $v\notin E$.
Note that $I\left( E\right) \subseteq v\Lambda ^{1}$. Since $E$ is
exhaustive, then by \cite[Lemma C.6]{RSY04}, $I\left( E\right) $ is also
exhaustive. So%
\begin{equation}
I\left( E\right) \subseteq v\Lambda ^{1}\text{ is exhaustive}.
\label{equ-I(E)-is-exhaustive}
\end{equation}%
Take $e\in I\left( E\right) $ and by \cite[Lemma C.5]{RSY04}, $\operatorname{Ext}%
\left( e;E\right) $ is exhaustive. By \cite[Lemma C.8]{RSY04}, $L\left(
\operatorname{Ext}\left( e;E\right) \right) <L\left( E\right) =l+1$ and then $%
L\left( \operatorname{Ext}\left( e;E\right) \right) \leq l$. So by the inductive
hypothesis, $\prod_{\nu \in \operatorname{Ext}\left( e;E\right) }\left( S_{s\left(
e\right) }-S_{\nu }S_{\nu ^{\ast }}\right) =0$ and then by Lemma \ref%
{KP-family-on-edges-stepstone-lemma}, we get%
\begin{equation}
S_{v}-S_{e}S_{e^{\ast }}=\prod_{\nu \in \operatorname{Ext}\left( e;E\right) }\left(
S_{v}-S_{e\nu }S_{\left( e\nu \right) ^{\ast }}\right) \text{.}
\label{equ-KP-family-on-edges}
\end{equation}%
Now note that for $\nu \in \operatorname{Ext}\left( e;E\right) $, there exists $%
\lambda \in E$ with $e\nu =\lambda \lambda ^{\prime }$, and then
\begin{equation*}
\left( S_{v}-S_{\lambda }S_{\lambda ^{\ast }}\right) \left( S_{v}-S_{e\nu
}S_{\left( e\nu \right) ^{\ast }}\right) =S_{v}-S_{\lambda }S_{\lambda
^{\ast }}\text{.}
\end{equation*}%
Hence%
\begin{align*}
\prod_{\lambda \in E}\left( S_{v}-S_{\lambda }S_{\lambda ^{\ast }}\right) & =%
\big(\prod_{\lambda \in E}\left( S_{v}-S_{\lambda }S_{\lambda ^{\ast
}}\right) \big)\big(\prod_{e\in I\left( E\right) }\prod_{\nu \in \operatorname{Ext}%
\left( e;E\right) }\left( S_{v}-S_{e\nu }S_{\left( e\nu \right) ^{\ast
}}\right) \big) \\
& =\big(\prod_{\lambda \in E}\left( S_{v}-S_{\lambda }S_{\lambda ^{\ast
}}\right) \big)\big(\prod_{e\in I\left( E\right) }\left(
S_{v}-S_{e}S_{e^{\ast }}\right) \big)\text{ (by %
\eqref{equ-KP-family-on-edges})} \\
& =\big(\prod_{\lambda \in E}\left( S_{v}-S_{\lambda }S_{\lambda ^{\ast
}}\right) \big)\left( 0\right) \text{ (by \eqref{equ-I(E)-is-exhaustive} and
the inductive hypothesis)} \\
& =0\text{,}
\end{align*}%
as required.
\end{proof}

\subsection{The $k$-graph $T\Lambda $ and Kumjian-Pask $T\Lambda $-families}

\label{Subsection-k-graph-TLambda}As in \cite[Proposition 3.1]{P15}, for a
row-finite $k$-graph with no sources $\Lambda $, $T\Lambda $ is the $k$%
-graph where%
\begin{equation*}
T\Lambda ^{0}:=\left\{ \alpha \left( v\right) ,\beta \left( v\right) :v\in
\Lambda ^{0}\right\} \text{, }T\Lambda :=\left\{ \alpha \left( \lambda
\right) ,\beta \left( \lambda \right) :\lambda \in \Lambda \right\} \,\text{,%
}
\end{equation*}%
and for $\alpha \left( \lambda \right) ,\beta \left( \lambda \right) \in
\left. T\Lambda \right\backslash T\Lambda ^{0}$,%
\begin{align*}
r\left( \alpha \left( \lambda \right) \right) :=\alpha \left( r\left(
\lambda \right) \right) & \text{, }s\left( \alpha \left( \lambda \right)
\right) :=\alpha \left( s\left( \lambda \right) \right) \text{,} \\
r\left( \beta \left( \lambda \right) \right) :=\alpha \left( r\left( \lambda
\right) \right) & \text{, }s\left( \beta \left( \lambda \right) \right)
:=\beta \left( s\left( \lambda \right) \right) \text{.}
\end{align*}%
Note that every vertex $\beta \left( v\right) $ satisfies $\beta \left(
v\right) T\Lambda =\left\{ \beta \left( v\right) \right\} $ and by \cite[%
Proposition 3.4]{P15}, $T\Lambda $ is row-finite and aperiodic. The next
proposition characterises exhaustive sets of $T\Lambda $.

\begin{proposition}
\label{characterisation-of-tlambda-when-lambda-has-no-sources}Suppose that $%
\Lambda $ is a row-finite $k$-graph with no sources. Then for every $\alpha
\left( v\right) \in T\Lambda ^{0}$, the only exhaustive set contained in $%
\alpha \left( v\right) T\Lambda ^{1}$ is $\alpha \left( v\right) T\Lambda
^{1}$ itself.
\end{proposition}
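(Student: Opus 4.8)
The plan is to prove the statement in its strong form: I will show that \emph{every} exhaustive set $E\subseteq\alpha(v)T\Lambda^{1}$ must already contain all of $\alpha(v)T\Lambda^{1}$, so that $E=\alpha(v)T\Lambda^{1}$. The one structural fact I would lean on throughout is that each vertex $\beta(w)$ is a sink, i.e. $\beta(w)T\Lambda=\{\beta(w)\}$. Consequently any $\beta$-path $\beta(\lambda)$ terminates at such a vertex and therefore admits no proper extension; the only common extensions of $\beta(\lambda)$ with another path are the initial segments of $\beta(\lambda)$ itself, and in particular $T\Lambda^{\min}(\beta(\lambda),\mu)\neq\emptyset$ forces $\mu=\beta(\lambda)(0,d(\mu))$.

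First I would record the shape of the edge set. Since $r(\alpha(e))=r(\beta(e))=\alpha(r(e))$, we have $\alpha(v)T\Lambda^{1}=\{\alpha(e):e\in v\Lambda^{1}\}\cup\{\beta(e):e\in v\Lambda^{1}\}$. From the composition rule $\alpha(\mu)\beta(\nu)=\beta(\mu\nu)$ one reads off how the initial edges of a $\beta$-path are coloured: if $e\in v\Lambda^{e_{i}}$ then $\beta(e)(0,e_{i})=\beta(e)$ is itself a $\beta$-edge, whereas for a longer $\beta$-path $\beta(\lambda)$ with $d(\lambda)\neq e_{j}$ the initial edge $\beta(\lambda)(0,e_{j})$ is the $\alpha$-edge $\alpha(\lambda(0,e_{j}))$. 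Keeping this $\alpha$-versus-$\beta$ distinction straight is the one place a routine but easy-to-slip computation is required.

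The heart of the argument is then to produce, for each edge $\mu_{0}\in\alpha(v)T\Lambda^{1}$, a witness $\tau\in\alpha(v)T\Lambda$ whose unique common extension among the edges of $\alpha(v)T\Lambda^{1}$ is $\mu_{0}$ itself; exhaustiveness applied to $\tau$ then forces $\mu_{0}\in E$. For $\mu_{0}=\beta(e)$ I would take $\tau=\beta(e)$: since $\beta(e)$ cannot be extended, the only edge $\mu$ with $T\Lambda^{\min}(\beta(e),\mu)\neq\emptyset$ is $\beta(e)$ itself. For $\mu_{0}=\alpha(e)$ with $e\in v\Lambda^{e_{i}}$, I would use that $\Lambda$ has no sources to pick $g\in s(e)\Lambda^{e_{i}}$ and set $\tau=\beta(eg)\in\alpha(v)T\Lambda$. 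Because $d(eg)=2e_{i}$ is supported only in direction $i$, the path $\beta(eg)$ has a single initial edge, namely $\beta(eg)(0,e_{i})=\alpha(e)$ (here $d(eg)\neq e_{i}$, so it is of $\alpha$-type); and as $\beta(eg)$ admits no proper extension, $\alpha(e)$ is the only edge having a common extension with $\tau$. Letting $e$ range over $v\Lambda^{1}$ in both cases yields $\alpha(v)T\Lambda^{1}\subseteq E$, hence equality. For completeness I would also note that $\alpha(v)T\Lambda^{1}$ is genuinely exhaustive, since any nontrivial $\tau\in\alpha(v)T\Lambda$ has an initial edge $\tau(0,e_{j})\in\alpha(v)T\Lambda^{1}$ with $\tau$ itself as a common extension.

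The main obstacle to guard against is the verification that $\beta(eg)$ really has no common extension with a $\beta$-edge of a different degree $e_{j}$ or with an unrelated $\alpha$-edge. This is exactly where the sink property $\beta(w)T\Lambda=\{\beta(w)\}$ does the work: it collapses any common extension of $\beta(eg)$ to $\beta(eg)$ itself, so the candidate edge must equal $\beta(eg)(0,d(\mu))$, and since $2e_{i}$ has no component in directions $j\neq i$, the only such initial segment is $\alpha(e)$. Once this is checked, the two cases above complete the proof.
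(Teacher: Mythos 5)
Your proof is correct and follows essentially the same route as the paper: you use the same two witness paths (the edge $\beta(e)$ itself for the $\beta$-edges, and $\beta(eg)=\alpha(e)\beta(g)$ of degree $2e_{i}$ for the $\alpha$-edges, obtained from the no-sources hypothesis), and the same key structural fact that $\beta(w)T\Lambda=\{\beta(w)\}$ collapses all common extensions. The only difference is cosmetic: the paper phrases the $\alpha$-edge case as a contradiction ($\omega$ would have to be $\tau$ or $\alpha(e)$, and $\tau\notin E$ since $E$ contains only edges), whereas you argue directly that $\alpha(e)$ is the unique edge admitting a common extension with $\tau$.
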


\begin{proof}
Fix an exhaustive set $E\subseteq \alpha \left( v\right) T\Lambda ^{1}$. We
have to show $E=\alpha \left( v\right) T\Lambda ^{1}$. Since $E$ is
exhaustive, for $\beta \left( e\right) \in \alpha \left( v\right) T\Lambda
^{1}$, there exists an edge $\tau _{e}\in E$ such that $T\Lambda ^{\min
}\left( \beta \left( e\right) ,\tau _{e}\right) \neq \emptyset $. Since $%
s\left( \beta \left( e\right) \right) T\Lambda =\left\{ s\left( \beta \left(
e\right) \right) \right\} $, then $\operatorname{MCE}\left( \beta \left( e\right)
,\tau _{e}\right) =\left\{ \beta \left( e\right) \right\} $. Hence, $\tau
_{e}=\beta \left( e\right) $ because both $\tau _{e}$ and $\beta \left(
e\right) $ are edges. Thus $\beta \left( e\right) \in E$ and $E$ contains $%
\beta \left( v\Lambda ^{1}\right) $.

Now we claim $\alpha \left( v\Lambda ^{1}\right) \subseteq E$. Suppose for a
contradiction that there exist $1\leq i\leq k$ and $e\in v\Lambda ^{e_{i}}$
such that $\alpha \left( e\right) \notin E$. Since $\Lambda $ has no
sources, there exists an edge $f\in s\left( e\right) \Lambda ^{e_{i}}$. Now
consider the path $\tau =\alpha \left( e\right) \beta \left( f\right) $.
This is a path with degree $2e_{i}$ whose range at $\alpha \left( v\right) $
and $s\left( \tau \right) T\Lambda =\left\{ \beta \left( s\left( f\right)
\right) \right\} $. Since $E$ is exhaustive, there exists $\omega \in E$
such that $T\Lambda ^{\min }\left( \tau ,\omega \right) \neq \emptyset $.
Since $\tau $ is a path with length $2e_{i}$ and $s\left( \tau \right)
T\Lambda =\left\{ \beta \left( s\left( f\right) \right) \right\} $, then $%
\omega $ is either equal to $\tau $ or $\alpha \left( e\right) $. Since $%
\alpha \left( e\right) \notin E$, then $\tau =\omega \in E$, which
contradicts that $E$ only contains edges. The conclusion follows.
\end{proof}

A consequence of Proposition \ref%
{characterisation-of-tlambda-when-lambda-has-no-sources} is the following:

\begin{lemma}
\label{KP-TLambda-when-lambda-has-no-sources}Let $\Lambda $ be a row-finite $%
k$-graph with no sources and $R$ be a commutative ring with $1$. Suppose
that $\{S_{\tau },S_{\omega ^{\ast }}:\tau ,\omega \in T\Lambda \}$ is a
Cohn $T\Lambda $-family in an $R$-algebra $A$. Then the collection is a
Kumjian-Pask $T\Lambda $-family if and only if for every $\alpha \left(
v\right) \in T\Lambda ^{0}$,%
\begin{equation*}
\prod_{g\in \alpha \left( v\right) T\Lambda ^{1}}\left( S_{\alpha \left(
v\right) }-S_{g}S_{g^{\ast }}\right) =0\text{.}
\end{equation*}
\end{lemma}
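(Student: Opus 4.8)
The plan is to apply Proposition~\ref{KP-family-on-edges-KP} directly to the $k$-graph $T\Lambda$ and then sort the vertices of $T\Lambda$ into the two types $\alpha\left(v\right)$ and $\beta\left(v\right)$. By Proposition~\ref{KP-family-on-edges-KP} applied to $T\Lambda$, the given Cohn $T\Lambda$-family is a Kumjian-Pask $T\Lambda$-family if and only if
\begin{equation*}
\prod_{g\in E}\left( S_{w}-S_{g}S_{g^{\ast }}\right) =0
\end{equation*}
for every $w\in T\Lambda ^{0}$ and every exhaustive $E\subseteq wT\Lambda ^{1}$. Since $T\Lambda ^{0}=\left\{ \alpha \left( v\right) ,\beta \left( v\right) :v\in \Lambda ^{0}\right\} $, it then suffices to understand the exhaustive subsets of $wT\Lambda ^{1}$ for each of the two kinds of vertex.

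First I would treat the vertices of the form $\beta \left( v\right) $. Because $\beta \left( v\right) T\Lambda =\left\{ \beta \left( v\right) \right\} $ (as noted just before Proposition~\ref{characterisation-of-tlambda-when-lambda-has-no-sources}), the only path with range $\beta \left( v\right) $ is the vertex itself, so $\beta \left( v\right) T\Lambda ^{1}=\emptyset $. The empty set is not exhaustive in $\beta \left( v\right) T\Lambda $: the path $\beta \left( v\right) \in \beta \left( v\right) T\Lambda $ admits no $\mu \in \emptyset $ with $T\Lambda ^{\min }\left( \beta \left( v\right) ,\mu \right) \neq \emptyset $. Hence $\beta \left( v\right) T\Lambda ^{1}$ has no exhaustive subset at all, and the vertices $\beta \left( v\right) $ contribute no constraint in the characterisation above.

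Next I would treat the vertices of the form $\alpha \left( v\right) $. Here Proposition~\ref{characterisation-of-tlambda-when-lambda-has-no-sources} does the work: the only exhaustive set contained in $\alpha \left( v\right) T\Lambda ^{1}$ is $\alpha \left( v\right) T\Lambda ^{1}$ itself. Consequently the conditions coming from $w=\alpha \left( v\right) $ collapse to the single equation
\begin{equation*}
\prod_{g\in \alpha \left( v\right) T\Lambda ^{1}}\left( S_{\alpha \left( v\right) }-S_{g}S_{g^{\ast }}\right) =0.
\end{equation*}
Combining the two cases, the characterisation from Proposition~\ref{KP-family-on-edges-KP} holds precisely when this equation is satisfied for every $\alpha \left( v\right) \in T\Lambda ^{0}$, which is exactly the stated condition.

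The argument is essentially a bookkeeping combination of the two preceding results, so there is no deep obstacle; the one point that requires care is the observation that the empty set fails to be exhaustive, so that the source vertices $\beta \left( v\right) $ genuinely impose no relation. I would state this explicitly to avoid the temptation to read an empty product as giving a spurious condition.
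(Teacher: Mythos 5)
Your proposal is correct and is essentially the paper's own argument: apply Proposition~\ref{KP-family-on-edges-KP} to $T\Lambda$, note that $\beta\left(v\right)T\Lambda^{1}$ has no exhaustive subset, and invoke Proposition~\ref{characterisation-of-tlambda-when-lambda-has-no-sources} for the vertices $\alpha\left(v\right)$. Your explicit justification that the empty set fails to be exhaustive at $\beta\left(v\right)$ is a small but welcome elaboration of a point the paper states without proof.
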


\begin{proof}
If $x=\beta \left( v\right) $, then $\beta \left( v\right) T\Lambda =\left\{
\beta \left( v\right) \right\} $ and there is no exhaustive set contained in
$xT\Lambda ^{1}$. On the other hand, if $x=\alpha \left( v\right) $, by
Proposition \ref{characterisation-of-tlambda-when-lambda-has-no-sources},
the only exhaustive set contained in $\alpha \left( v\right) T\Lambda ^{1}$
is $\alpha \left( v\right) T\Lambda ^{1}$. Therefore, by Proposition \ref%
{KP-family-on-edges-KP}, $\left\{ S_{\tau },S_{\omega ^{\ast }}:\tau ,\omega
\in T\Lambda \right\} $ is a Kumjian-Pask $T\Lambda $-family if and only if $%
\prod_{g\in \alpha \left( v\right) T\Lambda ^{1}}\left( S_{\alpha \left(
v\right) }-S_{g}S_{g^{\ast }}\right) =0$ for all $\alpha \left( v\right) \in
T\Lambda ^{0}$, as required.
\end{proof}

\subsection{Relationship between Cohn $\Lambda $-families and Kumjian-Pask $%
T\Lambda $-families}

\label{Subsection-Cohn-path-algebras-and-KP-algebras}In this section, we
start out by investigating the relationship between Cohn $\Lambda $-families
and Kumjian-Pask $T\Lambda $-families (Theorem \ref{CP-is-isomorphic-to-KP}%
). Once we have this, we are then ready to prove Theorem \ref%
{the-uniqueness-theorem-of-CP-family}.

First we establish some stepping stone results (Lemma \ref{F_t,v} and Lemma %
\ref{S-is-a-KP-family}).

\begin{lemma}
\label{F_t,v}Suppose that $\left\{ T_{\lambda },T_{\mu ^{\ast }}:\lambda
,u\in \Lambda \right\} $ is a Cohn $\Lambda $-family in an $R$-algebra $A$.
For $v\in \Lambda ^{0}$, define%
\begin{equation*}
F_{T,v}:=T_{v}-\prod\limits_{e\in v\Lambda ^{1}}\left( T_{v}-T_{e}T_{e^{\ast
}}\right) \text{.}
\end{equation*}%
Then

\begin{enumerate}
\item[(a)] For $v\in \Lambda ^{0}$, we have%
\begin{equation*}
F_{T,v}=F_{T,v}^{2}\text{ and }T_{v}-F_{T,v}=\left( T_{v}-F_{T,v}\right) ^{2}%
\text{.}
\end{equation*}

\item[(b)] For every $v,w\in \Lambda ^{0}$ with $v\neq w$, we have%
\begin{equation*}
F_{T,w}F_{T,v}=0=F_{T,v}F_{T,w}\text{ and }T_{w}F_{T,v}=0=F_{T,v}T_{w}\text{.%
}
\end{equation*}

\item[(c)] For $v\in \Lambda ^{0}$ and $\lambda \in v\Lambda \backslash
\left\{ v\right\} $, we have%
\begin{equation*}
T_{v}F_{T,v}=F_{T,v}=F_{T,v}T_{v}\text{,}
\end{equation*}%
\begin{equation*}
F_{T,v}T_{\lambda }=T_{\lambda }\text{ and }T_{\lambda ^{\ast
}}F_{T,v}=T_{\lambda ^{\ast }}\text{.}
\end{equation*}

\item[(d)] Furthermore, $F_{T,v}\neq 0$ for all $r\in \left.
R\right\backslash \left\{ 0\right\} $ and $v\in \Lambda ^{0}$ if and only if
$T_{v}\neq 0$ for all $r\in \left. R\right\backslash \left\{ 0\right\} $ and
$v\in \Lambda ^{0}$.
\end{enumerate}
\end{lemma}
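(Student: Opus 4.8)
The plan is to prove the four parts in order, since later parts depend on earlier ones. Throughout I would work with the key observation that for $e \in v\Lambda^1$, the element $T_eT_{e^*}$ is an idempotent (by Proposition \ref{properties-of-CP}(a), which shows $\{T_\lambda T_{\lambda^*}\}$ is commuting, and one computes $(T_eT_{e^*})^2 = T_e(T_{e^*}T_e)T_{e^*} = T_eT_{s(e)}T_{e^*} = T_eT_{e^*}$ using (CP3) with $\Lambda^{\min}(e,e)=\{(s(e),s(e))\}$). Moreover $T_v(T_eT_{e^*}) = T_eT_{e^*} = (T_eT_{e^*})T_v$ since $r(e)=v$, so each factor $T_v - T_eT_{e^*}$ is an idempotent lying in the corner $T_v A T_v$, and these factors commute with one another by Proposition \ref{properties-of-CP}(a). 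Hence $\prod_{e\in v\Lambda^1}(T_v - T_eT_{e^*})$ is itself an idempotent $p_v$ in the corner, and $F_{T,v} = T_v - p_v$.

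For part (a): writing $F_{T,v} = T_v - p_v$ with $p_v$ an idempotent satisfying $T_v p_v = p_v = p_v T_v$, I compute $F_{T,v}^2 = T_v - T_vp_v - p_vT_v + p_v = T_v - p_v = F_{T,v}$; similarly $(T_v - F_{T,v})^2 = p_v^2 = p_v = T_v - F_{T,v}$. For part (b): since $T_wT_v = 0$ for $v\neq w$ by (CP1), and $F_{T,v}$ lies in the corner $T_vAT_v$ (so $F_{T,v} = T_v F_{T,v} T_v$, to be verified in part (c) or directly), the products $T_wF_{T,v}$ and $F_{T,v}T_w$ vanish by inserting the orthogonality; the four-fold vanishing $F_{T,w}F_{T,v} = F_{T,v}F_{T,w} = 0$ then follows because each $F$ sits in its own orthogonal corner. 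For part (c): the identities $T_vF_{T,v} = F_{T,v} = F_{T,v}T_v$ are immediate from $F_{T,v}$ lying in the corner. The crucial identities are $F_{T,v}T_\lambda = T_\lambda$ and $T_{\lambda^*}F_{T,v} = T_{\lambda^*}$ for $\lambda \in v\Lambda\setminus\{v\}$; these I would prove by showing $p_v T_\lambda = 0$, equivalently $(T_v - T_eT_{e^*})T_\lambda$ annihilates $\lambda$ for the first edge $e = \lambda(0,e_i)$ appearing in $\lambda$, using that $T_{e^*}T_\lambda = T_{\lambda(e_i, d(\lambda))}$ via (CP3) and (CP2), so that $T_eT_{e^*}T_\lambda = T_\lambda$ and the factor kills $T_\lambda$.

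The main obstacle I anticipate is part (c), specifically verifying $p_v T_\lambda = 0$ when $\lambda \in v\Lambda$ has positive degree. The subtlety is that $\lambda$ need not begin with an edge of every color, so one must argue that at least one factor $T_v - T_eT_{e^*}$ in the product annihilates $T_\lambda$: choosing $e = \lambda(0, e_i)$ for any $i$ with $d(\lambda)_i > 0$ gives $T_eT_{e^*}T_\lambda = T_\lambda$ (hence that factor sends $T_\lambda$ to $0$), and since the factors commute, the whole product vanishes on $T_\lambda$. Part (d) is then a formality: since $F_{T,v} = T_v - p_v = (T_v - T_eT_{e^*})\cdots$, the implication $T_v \neq 0 \Rightarrow F_{T,v} \neq 0$ follows because $F_{T,v}$ acts as the identity on $T_\lambda$ for $\lambda \in v\Lambda\setminus\{v\}$ (and on $T_v$ itself when $v\Lambda^1$ is suitably nontrivial, which holds as $\Lambda$ has no sources); conversely $F_{T,v} = T_vF_{T,v}$ forces $F_{T,v} = 0$ whenever $T_v = 0$, giving the reverse implication by contraposition. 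Note the "$r \in R\setminus\{0\}$" quantifier in part (d) is vacuous as stated since $r$ does not appear; I would read it as the natural statement comparing nonvanishing of $F_{T,v}$ and $T_v$.
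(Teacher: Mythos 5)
Your proposal is correct and follows essentially the same route as the paper: each factor $T_{v}-T_{e}T_{e^{\ast }}$ is an idempotent in the commuting family of Proposition \ref{properties-of-CP}(a), parts (a)--(b) follow from orthogonality of the $T_{v}$, and the key step $F_{T,v}T_{\lambda }=T_{\lambda }$ is obtained exactly as in the paper by choosing an initial edge $e=\lambda \left( 0,e_{i}\right) $ so that $T_{e}T_{e^{\ast }}T_{\lambda }=T_{\lambda }$ and the corresponding factor kills $T_{\lambda }$. Two cosmetic caveats: part (d) is meant to carry the scalar (i.e.\ $rF_{T,v}\neq 0$ iff $rT_{v}\neq 0$, which your argument handles without change, noting that nonvanishing of $rT_{e}$ uses $rT_{s\left( e\right) }\neq 0$ from the ``for all $v$'' hypothesis), and your parenthetical claim that $F_{T,v}$ acts as the identity on $T_{v}$ is false (one only has $F_{T,v}T_{v}=F_{T,v}$), but it is not needed since $F_{T,v}T_{e}=T_{e}$ for some $e\in v\Lambda ^{1}\neq \emptyset $ already gives the nonvanishing.
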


\begin{proof}
First we show (a). Take $v\in \Lambda ^{0}$. Note that $\left(
T_{v}-T_{e}T_{e^{\ast }}\right) ^{2}=\left( T_{v}-T_{e}T_{e^{\ast }}\right) $
for $e\in v\Lambda ^{1}$. Hence
\begin{equation*}
\left( T_{v}-F_{T,v}\right) ^{2}=\prod\limits_{e\in v\Lambda ^{1}}\left(
T_{v}-T_{e}T_{e^{\ast }}\right) ^{2}=\prod\limits_{e\in v\Lambda ^{1}}\left(
T_{v}-T_{e}T_{e^{\ast }}\right) =T_{v}-F_{T,v}
\end{equation*}%
and
\begin{equation*}
F_{T,v}^{2}=\Big(T_{v}-\prod\limits_{e\in v\Lambda ^{1}}\left(
T_{v}-T_{e}T_{e^{\ast }}\right) \Big)^{2}=T_{v}-\prod\limits_{e\in v\Lambda
^{1}}\left( T_{v}-T_{e}T_{e^{\ast }}\right) =F_{T,v}\text{.}
\end{equation*}

To show (b), we take $v,w\in \Lambda ^{0}$ with $v\neq w$. Then $%
T_{w}T_{v}=0 $ and $T_{w}T_{e}=0$ for all $e\in v\Lambda ^{1}$. Hence,%
\begin{equation*}
T_{w}F_{T,v}=T_{w}\Big(T_{v}-\prod\limits_{e\in v\Lambda ^{1}}\left(
T_{v}-T_{e}T_{e^{\ast }}\right) \Big)=0
\end{equation*}%
and by using a similar argument, we also get $F_{T,v}T_{w}=0$, as required.
On the other hand, we also have%
\begin{equation*}
F_{T,w}F_{T,v}=\Big(T_{w}-\prod\limits_{f\in w\Lambda ^{1}}\left(
T_{w}-T_{f}T_{f^{\ast }}\right) \Big)\Big(T_{v}-\prod\limits_{e\in v\Lambda
^{1}}\left( T_{v}-T_{e}T_{e^{\ast }}\right) \Big)=0
\end{equation*}%
and a similar argument also applies to get $F_{T,v}F_{T,w}=0$.

Next we show (c). We take $v\in \Lambda ^{0}$. Then%
\begin{equation*}
T_{v}F_{T,v}=T_{v}\Big(T_{v}-\prod\limits_{e\in v\Lambda ^{1}}\left(
T_{v}-T_{e}T_{e^{\ast }}\right) \Big)=T_{v}-\prod\limits_{e\in v\Lambda
^{1}}\left( T_{v}-T_{e}T_{e^{\ast }}\right) =F_{T,v}
\end{equation*}%
and since $T_{v}=T_{v^{\ast }}$, then by using a similar argument, we also
get $F_{T,v}T_{v}=F_{T,v}$.

Now take $\lambda \in v\Lambda \backslash \left\{ v\right\} $. Then there
exists $f\in v\Lambda ^{1}$ such that $ff^{\prime }=\lambda $. This implies $%
T_{f^{\ast }}T_{\lambda }=T_{f^{\prime }}$ and%
\begin{equation*}
\left( T_{v}-T_{f}T_{f^{\ast }}\right) T_{\lambda }=T_{\lambda
}-T_{f}T_{f^{\ast }}T_{\lambda }=T_{\lambda }-T_{f}T_{f^{\prime
}}=T_{\lambda }-T_{\lambda }=0\text{.}
\end{equation*}%
Hence,%
\begin{equation*}
\Big(\prod\limits_{e\in v\Lambda ^{1}}\left( T_{v}-T_{e}T_{e^{\ast }}\right) %
\Big)T_{\lambda }=0
\end{equation*}%
and%
\begin{equation*}
F_{T,v}T_{\lambda }=\Big(T_{v}-\prod\limits_{e\in v\Lambda ^{1}}\left(
T_{v}-T_{e}T_{e^{\ast }}\right) \Big)T_{\lambda }=T_{\lambda }\text{.}
\end{equation*}%
By using a similar argument, we also get $T_{\lambda ^{\ast
}}F_{T,v}=T_{\lambda ^{\ast }}$.

Finally, we show (d). First suppose that there exists $r\in \left.
R\right\backslash \left\{ 0\right\} $ and $v\in \Lambda ^{0}$ with $rT_{v}=0$%
. Then $rT_{e}=rT_{v}T_{e}=0$ for all $e\in v\Lambda ^{1}$ and then $%
rF_{T,v}=rT_{v}-r\prod_{e\in v\Lambda ^{1}}\left( T_{v}-T_{e}T_{e^{\ast
}}\right) =0$.

To show the reverse implication, suppose that there exists $r\in \left.
R\right\backslash \left\{ 0\right\} $ and $v\in \Lambda ^{0}$ with $%
rF_{T,v}=0$. Take $f\in v\Lambda ^{1}$, then%
\begin{equation}
T_{f}T_{f^{\ast }}\left( T_{v}-T_{f}T_{f^{\ast }}\right) =T_{f}T_{f^{\ast
}}-T_{f}\left( T_{f^{\ast }}T_{f}\right) T_{f^{\ast }}=T_{f}T_{f^{\ast
}}-T_{f}T_{f^{\ast }}=0\text{.}  \label{equ-TfTf(Tv-TfTf)}
\end{equation}%
Hence,%
\begin{equation*}
rT_{f}T_{f^{\ast }}=rT_{f}T_{f^{\ast }}T_{v}=rT_{f}T_{f^{\ast }}\Big(%
F_{T,v}+\prod\limits_{e\in v\Lambda ^{1}}\left( T_{v}-T_{e}T_{e^{\ast
}}\right) \Big)=rT_{f}T_{f^{\ast }}\prod\limits_{e\in v\Lambda ^{1}}\left(
T_{v}-T_{e}T_{e^{\ast }}\right) =0
\end{equation*}%
since $f\in v\Lambda ^{1}$ and \eqref{equ-TfTf(Tv-TfTf)}. Therefore,%
\begin{equation*}
rT_{f}=rT_{f}T_{s\left( f\right) }=rT_{f}\left( T_{f^{\ast }}T_{f}\right)
=\left( rT_{f}T_{f^{\ast }}\right) T_{f}=\left( 0\right) T_{f}=0
\end{equation*}%
and then%
\begin{equation*}
rT_{s\left( f\right) }=rT_{f^{\ast }}T_{f}=T_{f^{\ast }}\left( rT_{f}\right)
=T_{f^{\ast }}\left( 0\right) =0\text{,}
\end{equation*}%
as required.
\end{proof}

\begin{lemma}
\label{S-is-a-KP-family}Let $\Lambda $ be a row-finite $k$-graph with no
sources and $R$ be a commutative ring with $1$. Suppose that $\{T_{\lambda
},T_{\mu ^{\ast }}:\lambda ,u\in \Lambda \}$ is a Cohn $\Lambda $-family in
an $R$-algebra $A$. For $\tau ,\omega \in T\Lambda $, define%
\begin{align*}
S_{\tau }& :=\left\{
\begin{array}{ccc}
T_{\lambda }F_{T,s\left( \lambda \right) } & \text{if} & \tau =\alpha \left(
\lambda \right) \text{;} \\
T_{\lambda }\left( T_{s\left( \lambda \right) }-F_{T,s\left( \lambda \right)
}\right) & \text{if} & \tau =\beta \left( \lambda \right) \text{,}%
\end{array}%
\right. \\
S_{\omega ^{\ast }}& :=\left\{
\begin{array}{ccc}
F_{T,s\left( \mu \right) }T_{\mu ^{\ast }} & \text{if} & \omega =\alpha
\left( \mu \right) \text{;} \\
\left( T_{s\left( \mu \right) }-F_{T,s\left( \mu \right) }\right) T_{\mu
^{\ast }} & \text{if} & \omega =\beta \left( \mu \right) \text{.}%
\end{array}%
\right. \text{ }
\end{align*}%
Then

\begin{enumerate}
\item[(a)] $\left\{ S_{\tau },S_{\omega ^{\ast }}:\tau ,\omega \in T\Lambda
\right\} $ is a Kumjian-Pask $T\Lambda $-family.

\item[(b)] Suppose that $rT_{v}\neq 0$ and $r\prod_{e\in v\Lambda
^{1}}\left( T_{v}-T_{e}T_{e}^{\ast }\right) \neq 0$ for all $r\in \left.
R\right\backslash \left\{ 0\right\} $ and $v\in \Lambda ^{0}$. Suppose that $%
\pi _{S}:{\normalsize \operatorname{KP}}_{R}\left( T\Lambda \right) \rightarrow
{\normalsize A}$ is the $R$-algebra homomorphism such that $\pi _{S}\left(
s_{\tau }\right) =S_{\tau }$ and $\pi _{S}\left( s_{\omega ^{\ast }}\right)
=S_{\omega ^{\ast }}$ for $\tau ,\omega \in T\Lambda $. Then $\pi _{S}$ is
injective.
\end{enumerate}
\end{lemma}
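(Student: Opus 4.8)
The plan is to verify in turn the relations (CP1), (CP2), (CP3) and (KP) for the family $\{S_\tau,S_{\omega^*}:\tau,\omega\in T\Lambda\}$, and then to deduce (b) from the Cuntz--Krieger uniqueness theorem. I would first simplify the vertex elements: since $s(v)=v$, Lemma~\ref{F_t,v}(a),(c) give $S_{\alpha(v)}=T_vF_{T,v}=F_{T,v}$ and $S_{\beta(v)}=T_v(T_v-F_{T,v})=T_v-F_{T,v}$. Then (CP1) is immediate from Lemma~\ref{F_t,v}: idempotency of $S_{\alpha(v)}$ and $S_{\beta(v)}$ is part~(a), orthogonality of distinct vertices is part~(b), and $S_{\alpha(v)}S_{\beta(v)}=F_{T,v}(T_v-F_{T,v})=F_{T,v}-F_{T,v}=0$. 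For (CP2) I would use that the only nontrivial compositions in $T\Lambda$ put an $\alpha$-path on the left, namely $\alpha(\lambda)\alpha(\mu)=\alpha(\lambda\mu)$ and $\alpha(\lambda)\beta(\mu)=\beta(\lambda\mu)$ when $s(\lambda)=r(\mu)$; in each case the middle factor collapses via the absorbing identities $F_{T,r(\mu)}T_\mu=T_\mu$ and $T_{\mu^*}F_{T,r(\mu)}=T_{\mu^*}$ of Lemma~\ref{F_t,v}(c) together with (CP2) for $T$, and symmetrically for the ghost compositions.

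The main obstacle is (CP3), which forces me to understand $T\Lambda^{\min}(\tau,\omega)$ explicitly. I would split into the four cases according to the $\alpha/\beta$ labels of $\tau$ and $\omega$ and, on the right-hand side, apply (CP3) for $T$ to rewrite $T_{\lambda^*}T_\mu=\sum_{(\lambda',\mu')\in\Lambda^{\min}(\lambda,\mu)}T_{\lambda'}T_{\mu'^*}$. The delicate point is that a path $\beta(\eta)$ over $\eta\in\MCE(\lambda,\mu)$ is a minimal common extension of the two $T\Lambda$-paths only when the relevant degree lies \emph{strictly} below the join $d(\lambda)\vee d(\mu)$: this is because $\beta(\eta)(0,m)=\alpha(\eta(0,m))$ for $m\lneq d(\eta)$, but $\beta(\eta)(0,d(\eta))=\beta(\eta)$. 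Thus for an $(\alpha,\alpha)$-pair the $\beta$-extensions contribute exactly when $\lambda',\mu'$ are non-vertices, and there the two contributions combine through $S_{\alpha(\lambda')}S_{\alpha(\mu')^*}+S_{\beta(\lambda')}S_{\beta(\mu')^*}=T_{\lambda'}T_{\mu'^*}$, while $F_{T,s(\lambda)}T_{\lambda'}=T_{\lambda'}$ and $T_{\mu'^*}F_{T,s(\mu)}=T_{\mu'^*}$ strip the outer $F$'s; when a component is a vertex the $\beta$-extension drops out, but the surviving $\alpha$-term already carries the matching projection $F_{T,\cdot}$, so the two sides again agree. The $(\alpha,\beta)$, $(\beta,\alpha)$ and $(\beta,\beta)$ cases go the same way, the key observation being that $T_{\mu'^*}(T_{s(\mu)}-F_{T,s(\mu)})=0$ whenever $\mu'$ is a non-vertex, which kills all but the prefix terms. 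I expect this bookkeeping to be the longest part of the argument.

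With (CP1)--(CP3) in hand, $\{S_\tau,S_{\omega^*}\}$ is a Cohn $T\Lambda$-family, so Lemma~\ref{KP-TLambda-when-lambda-has-no-sources} reduces (KP) to proving $\prod_{g\in\alpha(v)T\Lambda^1}(S_{\alpha(v)}-S_gS_{g^*})=0$ for each $v$. Since $\alpha(v)T\Lambda^1=\{\alpha(e),\beta(e):e\in v\Lambda^1\}$ and the projections $S_gS_{g^*}$ commute by Proposition~\ref{properties-of-CP}(a), I would group the factors into the pairs indexed by $e\in v\Lambda^1$. A short computation gives $S_{\alpha(e)}S_{\alpha(e)^*}=T_eF_{T,s(e)}T_{e^*}$ and $S_{\beta(e)}S_{\beta(e)^*}=T_e(T_{s(e)}-F_{T,s(e)})T_{e^*}$, which are orthogonal and sum to $T_eT_{e^*}$; combined with $F_{T,v}T_e=T_e$ and $T_{e^*}F_{T,v}=T_{e^*}$ this yields $(S_{\alpha(v)}-S_{\alpha(e)}S_{\alpha(e)^*})(S_{\alpha(v)}-S_{\beta(e)}S_{\beta(e)^*})=F_{T,v}-T_eT_{e^*}$. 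Hence the whole product equals $\prod_{e\in v\Lambda^1}(F_{T,v}-T_eT_{e^*})$. Writing $Q=\prod_{e\in v\Lambda^1}(T_v-T_eT_{e^*})=T_v-F_{T,v}$, so that $F_{T,v}-T_eT_{e^*}=(T_v-T_eT_{e^*})-Q$, and using $Q(T_v-T_eT_{e^*})=Q=Q^2$, an inclusion--exclusion expansion collapses the product to $Q(1-1)^{|v\Lambda^1|}=0$; this is where I use that $\Lambda$ has no sources, so $|v\Lambda^1|\ge 1$. This completes (a).

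For part~(b) I would invoke the Cuntz--Krieger uniqueness theorem (Theorem~\ref{the-CK-uniqueness-theorem}). As recorded above, $T\Lambda$ is aperiodic, so $\pi_S$ is injective once I check $\pi_S(rs_x)=rS_x\neq0$ for every $r\in R\setminus\{0\}$ and every vertex $x\in T\Lambda^0$. For $x=\beta(v)$ this is exactly the hypothesis, since $rS_{\beta(v)}=r(T_v-F_{T,v})=r\prod_{e\in v\Lambda^1}(T_v-T_eT_{e^*})\neq0$; for $x=\alpha(v)$ we have $rS_{\alpha(v)}=rF_{T,v}$, which is nonzero by Lemma~\ref{F_t,v}(d) because $rT_v\neq0$. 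Hence $\pi_S$ is injective, as required, and the only genuinely involved step in the whole argument is the case analysis for (CP3) in the second paragraph.
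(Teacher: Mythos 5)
Your proposal is correct and follows essentially the same route as the paper: verify (CP1)--(CP3) via the identities of Lemma~\ref{F_t,v}, reduce (KP) to the single product over $\alpha(v)T\Lambda^1$ using Lemma~\ref{KP-TLambda-when-lambda-has-no-sources}, and deduce (b) from aperiodicity of $T\Lambda$ and Theorem~\ref{the-CK-uniqueness-theorem}. The only differences are cosmetic — your description of $T\Lambda^{\min}$ via which factorisations of $\beta(\eta)$ yield $\alpha$-prefixes, and your inclusion--exclusion collapse of $\prod_{e}(F_{T,v}-T_eT_{e^*})$ in place of the paper's factorisation $F_{T,v}\prod_e(T_v-T_eT_{e^*})=F_{T,v}(T_v-F_{T,v})=0$ — and all the key identities you invoke are the ones the paper uses.
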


\begin{proof}
Now we show (a). First we show that $\left\{ S_{\tau },S_{\omega ^{\ast
}}:\tau ,\omega \in T\Lambda \right\} $ satisfies (CP1). Take $x\in T\Lambda
^{0}$. We have to show $S_{x}=S_{x^{\ast }}=S_{x}^{2}$. Note that $%
S_{x}=F_{T,v}$ if $x=\alpha \left( v\right) $; and $S_{x}=T_{v}-F_{T,v}$,
otherwise. In both cases, by Lemma \ref{F_t,v}(a), we have $S_{x}=S_{x^{\ast
}}=S_{x}^{2}$, as required.

Now take $x,y\in T\Lambda ^{0}$ with $x\neq y$. We have to show $%
S_{x}S_{y}=0 $. Since $S_{x}$ is either $F_{T,v}$ or $T_{v}-F_{T,v}$; and $%
S_{y}$ is also either $F_{T,w}$ or $T_{w}-F_{T,w}$, then Lemma \ref{F_t,v}%
(b) tells that $x\neq y$ implies $S_{x}S_{y}=0$. Therefore, $\left\{ S_{\tau
},S_{\omega ^{\ast }}:\tau ,\omega \in T\Lambda \right\} $ satisfies (CP1).

Next we show that $\left\{ S_{\tau },S_{\omega ^{\ast }}:\tau ,\omega \in
T\Lambda \right\} $ satisfies (CP2). Take $\tau ,\omega \in T\Lambda $ where
$s\left( \tau \right) =r\left( \omega \right) $. We have to show $S_{\tau
}S_{\omega }=S_{\tau \omega }$ and $S_{\omega ^{\ast }}S_{\tau ^{\ast
}}=S_{\left( \tau \omega \right) ^{\ast }}$. Note that each $\tau $ and $%
\omega $ is either in the form $\alpha \left( \lambda \right) $ or $\beta
\left( \mu \right) $. So we give a separate argument for each case.

First suppose $\tau =\beta \left( \lambda \right) $. Since $s\left( \tau
\right) T\Lambda =\beta \left( s\left( \lambda \right) \right) $ and $%
s\left( \tau \right) =r\left( \omega \right) $, then $\omega =s\left( \beta
\left( \lambda \right) \right) $. Hence,
\begin{align}
S_{\beta \left( \lambda \right) }S_{\beta \left( s\left( \lambda \right)
\right) }& =\left( T_{\lambda }\left( T_{s\left( \lambda \right)
}-F_{T,s\left( \lambda \right) }\right) \right) \left( T_{s\left( \lambda
\right) }-F_{T,s\left( \lambda \right) }\right)  \label{equ-R2-Sbeta-Sbeta}
\\
& =T_{\lambda }\left( T_{s\left( \lambda \right) }-F_{T,s\left( \lambda
\right) }\right) ^{2}=T_{\lambda }\left( T_{s\left( \lambda \right)
}-F_{T,s\left( \lambda \right) }\right) =S_{\beta \left( \lambda \right) }%
\text{.}  \notag
\end{align}

Next suppose $\tau =\alpha \left( \lambda \right) $ and $\omega =\beta
\left( \mu \right) $. Then $s\left( \tau \right) =r\left( \omega \right) $
implies $\mu \in s\left( \lambda \right) \Lambda \backslash \left\{ s\left(
\lambda \right) \right\} $ and by Lemma \ref{F_t,v}(c), $F_{T,s\left(
\lambda \right) }T_{\mu }=T_{\mu }$. Hence,%
\begin{align}
S_{\alpha \left( \lambda \right) }S_{\beta \left( \mu \right) }& =\left(
T_{\lambda }F_{T,s\left( \lambda \right) }\right) \left( T_{\mu }\left(
T_{s\left( \mu \right) }-F_{T,s\left( \mu \right) }\right) \right)
=T_{\lambda }T_{\mu }\left( T_{s\left( \lambda \mu \right) }-F_{T,s\left(
\lambda \mu \right) }\right)  \label{equ-R2-Salpha-Sbeta} \\
& =T_{\lambda \mu }\left( T_{s\left( \lambda \mu \right) }-F_{T,s\left(
\lambda \mu \right) }\right) =S_{\beta \left( \lambda \mu \right) }\text{.}
\notag
\end{align}

Finally suppose $\tau =\alpha \left( \lambda \right) $ and $\omega =\alpha
\left( \mu \right) $. Then $S_{\beta \left( \lambda \right) }S_{\alpha
\left( \mu \right) }=\left( S_{\beta \left( \lambda \right) }S_{\beta \left(
s\left( \lambda \right) \right) }\right) \left( S_{\alpha \left( s\left(
\lambda \right) \right) }S_{\alpha \left( \mu \right) }\right) =0$ (since $%
S_{\beta \left( s\left( \lambda \right) \right) }S_{\alpha \left( s\left(
\lambda \right) \right) }=0$) and%
\begin{align}
S_{\alpha \left( \lambda \right) }S_{\alpha \left( \mu \right) }& =\left(
S_{\alpha \left( \lambda \right) }+S_{\beta \left( \lambda \right) }\right)
\left( S_{\alpha \left( \mu \right) }+S_{\beta \left( \mu \right) }\right)
-S_{\alpha \left( \lambda \right) }S_{\beta \left( \mu \right) }-S_{\beta
\left( \lambda \right) }S_{\alpha \left( \mu \right) }-S_{\beta \left(
\lambda \right) }S_{\beta \left( \mu \right) }  \label{equ-R2-Salpha-Salpha}
\\
& =T_{\lambda }T_{\mu }-S_{\alpha \left( \lambda \right) }S_{\beta \left(
\mu \right) }-S_{\beta \left( \lambda \right) }S_{\beta \left( \mu \right) }%
\text{.}  \notag
\end{align}%
If $\mu =s\left( \lambda \right) $, then $S_{\alpha \left( \lambda \right)
}S_{\beta \left( \mu \right) }=\left( S_{\alpha \left( \lambda \right)
}S_{\alpha \left( s\left( \lambda \right) \right) }\right) S_{\beta \left(
s\left( \lambda \right) \right) }=0$ (since $S_{\alpha \left( s\left(
\lambda \right) \right) }S_{\beta \left( s\left( \lambda \right) \right) }=0$%
) and by \eqref{equ-R2-Sbeta-Sbeta}, \eqref{equ-R2-Salpha-Salpha} becomes%
\begin{equation*}
S_{\alpha \left( \lambda \right) }S_{\alpha \left( s\left( \lambda \right)
\right) }=T_{\lambda }-S_{\beta \left( \lambda \right) }=S_{\alpha \left(
\lambda \right) }\text{.}
\end{equation*}%
On the other hand, if $\mu \neq s\left( \lambda \right) $, then $S_{\beta
\left( \lambda \right) }S_{\beta \left( \mu \right) }=\left( S_{\beta \left(
\lambda \right) }S_{\beta \left( s\left( \lambda \right) \right) }\right)
\left( S_{r\left( \beta \left( \mu \right) \right) }S_{\beta \left( \mu
\right) }\right) =0$ (since $\beta \left( s\left( \lambda \right) \right)
\neq r\left( \beta \left( \mu \right) \right) $and $S_{\beta \left( s\left(
\lambda \right) \right) }S_{r\left( \beta \left( \mu \right) \right) }=0$)
and by \eqref{equ-R2-Salpha-Sbeta}, \eqref{equ-R2-Salpha-Salpha} becomes%
\begin{equation*}
S_{\alpha \left( \lambda \right) }S_{\alpha \left( \mu \right) }=T_{\lambda
\mu }-S_{\beta \left( \lambda \mu \right) }=S_{\alpha \left( \lambda \mu
\right) }\text{.}
\end{equation*}%
Therefore, $S_{\tau }S_{\omega }=S_{\tau \omega }$ and by using a similar
argument, we get $S_{\omega ^{\ast }}S_{\tau ^{\ast }}=S_{\left( \tau \omega
\right) ^{\ast }}$. Thus $\left\{ S_{\tau },S_{\omega ^{\ast }}:\tau ,\omega
\in T\Lambda \right\} $ satisfies (CP2).

Now we show that $\left\{ S_{\tau },S_{\omega ^{\ast }}:\tau ,\omega \in
T\Lambda \right\} $ satisfies (CP3). Take $\tau ,\omega \in T\Lambda $. We
have to show $S_{\tau ^{\ast }}S_{\omega }=\sum_{(\rho ,\zeta )\in T\Lambda
^{\min }\left( \tau ,\omega \right) }S_{\rho }S_{\zeta ^{\ast }}$. Note that
each $\tau $ and $\omega $ is either in the form $\alpha \left( \lambda
\right) $ or $\beta \left( \mu \right) $. So we give a separate argument for
each case.

First suppose $\tau =\beta \left( \lambda \right) $. Since $s\left( \tau
\right) T\Lambda =\beta \left( s\left( \lambda \right) \right) $, then $%
T\Lambda ^{\min }\left( \tau ,\omega \right) \neq \emptyset $ implies $\operatorname{%
MCE}\left( \tau ,\omega \right) =\left\{ \tau \right\} $. Hence, if $%
T\Lambda ^{\min }\left( \tau ,\omega \right) \neq \emptyset $, then we have $%
\tau =\omega \beta \left( \nu \right) $ for some $\nu \in \Lambda $ and%
\begin{equation*}
S_{\tau ^{\ast }}S_{\omega }=S_{\beta \left( \nu \right) ^{\ast }}S_{\omega
^{\ast }}S_{\omega }=S_{\beta \left( \nu \right) ^{\ast }}S_{s\left( \omega
\right) }=S_{\beta \left( \nu \right) ^{\ast }}=\sum_{(\rho ,\zeta )\in
T\Lambda ^{\min }\left( \tau ,\omega \right) }S_{\rho }S_{\zeta ^{\ast }}.
\end{equation*}%
So suppose $T\Lambda ^{\min }\left( \tau ,\omega \right) =\emptyset $ and we
have to show $S_{\tau ^{\ast }}S_{\omega }=0$. First note that regardless of
whether $\omega $ is equal to $\alpha \left( \mu \right) $ or $\beta \left(
\mu \right) $, $S_{\tau ^{\ast }}S_{\omega }$ has the form $\left( S_{\beta
\left( \lambda \right) ^{\ast }}T_{\mu }\right) b$. So it suffices to show $%
S_{\beta \left( \lambda \right) ^{\ast }}T_{\mu }=0$. We have
\begin{equation}
S_{\beta \left( \lambda \right) ^{\ast }}T_{\mu }=\left( T_{s\left( \lambda
\right) }-F_{T,s\left( \lambda \right) }\right) T_{\lambda ^{\ast }}T_{\mu
}=\left( T_{s\left( \lambda \right) }-F_{T,s\left( \lambda \right) }\right)
\sum_{(\nu ,\gamma )\in \Lambda ^{\min }\left( \lambda ,\mu \right) }T_{\nu
}T_{\gamma ^{\ast }}\text{.}  \label{equ-R3-S_beta(lambda)*T_mu}
\end{equation}%
If $\Lambda ^{\min }\left( \lambda ,\mu \right) =\emptyset $, then $S_{\beta
\left( \lambda \right) ^{\ast }}T_{\mu }=0$, as required. So suppose $%
\Lambda ^{\min }\left( \lambda ,\mu \right) \neq \emptyset $. Since $%
T\Lambda ^{\min }\left( \tau ,\omega \right) =\emptyset $ and $\Lambda
^{\min }\left( \lambda ,\mu \right) \neq \emptyset $, then $\lambda \notin
\operatorname{MCE}\left( \lambda ,\mu \right) $. Hence, for every $(\nu ,\gamma )\in
\Lambda ^{\min }\left( \lambda ,\mu \right) $, we have $\nu \in s\left(
\lambda \right) \Lambda \backslash \left\{ s\left( \lambda \right) \right\} $
and by Lemma \ref{F_t,v}(c), $F_{T,s\left( \lambda \right) }T_{\nu }=T_{\nu
} $. Hence, we can rewrite \eqref{equ-R3-S_beta(lambda)*T_mu} as%
\begin{equation*}
S_{\beta \left( \lambda \right) ^{\ast }}T_{\mu }=\sum_{(\nu ,\gamma )\in
\Lambda ^{\min }\left( \lambda ,\mu \right) }\left( T_{\nu }-T_{\nu }\right)
T_{\gamma ^{\ast }}=0\text{,}
\end{equation*}%
as required.

Next suppose $\tau =\alpha \left( \lambda \right) $ and $\omega =\beta
\left( \mu \right) $. By using a similar argument as in the case $\tau
=\beta \left( \lambda \right) $, we get $S_{\alpha \left( \lambda \right)
^{\ast }}S_{\beta \left( \mu \right) }=\sum_{(\rho ,\zeta )\in T\Lambda
^{\min }\left( \alpha \left( \lambda \right) ,\beta \left( \mu \right)
\right) }S_{\rho }S_{\zeta ^{\ast }}$.

Finally suppose $\tau =\alpha \left( \lambda \right) $ and $\omega =\alpha
\left( \mu \right) $. We give a separate argument for whether $\alpha \left(
\lambda \right) $ or $\alpha \left( \mu \right) $ belongs to $\operatorname{MCE}%
\left( \alpha \left( \lambda \right) ,\alpha \left( \mu \right) \right) $.
First suppose that at least one of $\alpha \left( \lambda \right) $ and $%
\alpha \left( \mu \right) $ belongs to $\operatorname{MCE}\left( \alpha \left(
\lambda \right) ,\alpha \left( \mu \right) \right) $. Without loss of
generality, we suppose $\alpha \left( \lambda \right) \in \operatorname{MCE}\left(
\alpha \left( \lambda \right) ,\alpha \left( \mu \right) \right) $. [A
similar argument also applies when $\alpha \left( \mu
\right) \in \operatorname{MCE}\left( \alpha \left( \lambda \right) ,\alpha \left(
\mu \right) \right) $.] Then $\alpha \left( \lambda \right) =\alpha \left(
\mu \nu \right) $ for some $\nu \in \Lambda $ and
\begin{equation*}
S_{\alpha \left( \lambda \right) ^{\ast }}S_{\alpha \left( \mu \right)
}=S_{\alpha \left( \nu \right) ^{\ast }}S_{\alpha \left( \mu \right) ^{\ast
}}S_{\alpha \left( \mu \right) }=S_{\alpha \left( \nu \right) ^{\ast
}}=\sum_{(\rho ,\zeta )\in T\Lambda ^{\min }\left( \alpha \left( \lambda
\right) ,\alpha \left( \mu \right) \right) }S_{\rho }S_{\zeta ^{\ast }}\text{%
,}
\end{equation*}%
as required.

So suppose $\alpha \left( \lambda \right) ,\alpha \left( \mu \right) \notin
\operatorname{MCE}\left( \alpha \left( \lambda \right) ,\alpha \left( \mu \right)
\right) $. Hence $\lambda ,\mu \notin \operatorname{MCE}\left( \lambda ,\mu \right) $%
. Then for every $(\nu ,\gamma )\in \Lambda ^{\min }\left( \lambda ,\mu
\right) $, we have $\nu \in s\left( \lambda \right) \Lambda \backslash
\left\{ s\left( \lambda \right) \right\} $ and $\gamma \in s\left( \mu
\right) \Lambda \backslash \left\{ s\left( \mu \right) \right\} $, and by
Lemma \ref{F_t,v}(c), $F_{T,s\left( \lambda \right) }T_{\nu }=T_{\nu }$ and $%
T_{\gamma ^{\ast }}F_{T,s\left( \mu \right) }=T_{\gamma ^{\ast }}$.
Therefore,%
\begin{align}
S_{\alpha \left( \lambda \right) ^{\ast }}S_{\alpha \left( \mu \right) }&
=\left( F_{T,s\left( \lambda \right) }T_{\lambda ^{\ast }}\right) \left(
T_{\mu }F_{T,s\left( \mu \right) }\right) =F_{T,s\left( \lambda \right) }%
\Big(\sum_{(\nu ,\gamma )\in \Lambda ^{\min }\left( \lambda ,\mu \right)
}T_{\nu }T_{\gamma ^{\ast }}\Big)F_{T,s\left( \mu \right) }
\label{equ-R3-Salpha-Salpha} \\
& =\sum_{(\nu ,\gamma )\in \Lambda ^{\min }\left( \lambda ,\mu \right)
}\left( F_{T,s\left( \lambda \right) }T_{\nu }\right) \left( T_{\gamma
^{\ast }}F_{T,s\left( \mu \right) }\right) =\sum_{(\nu ,\gamma )\in \Lambda
^{\min }\left( \lambda ,\mu \right) }T_{\nu }T_{\gamma ^{\ast }}\text{.}
\notag
\end{align}%
Since $s\left( \nu \right) =s\left( \gamma \right) $ for every $(\nu ,\gamma
)\in \Lambda ^{\min }\left( \lambda ,\mu \right) $, then by %
Lemma \ref{F_t,v}(a), \eqref{equ-R3-Salpha-Salpha} becomes%
\begin{align*}
S_{\alpha \left( \lambda \right) ^{\ast }}& S_{\alpha \left( \mu \right) } \\
& =\sum_{(\nu ,\gamma )\in \Lambda ^{\min }\left( \lambda ,\mu \right)
}\left( \left( T_{\nu }F_{T,s\left( \nu \right) }\right) \left( F_{T,s\left(
\gamma \right) }T_{\gamma ^{\ast }}\right) +T_{\nu }\left( T_{s\left( \nu
\right) }-F_{T,s\left( \nu \right) }\right) \left( T_{s\left( \gamma \right)
}-F_{T,s\left( \gamma \right) }\right) T_{\gamma ^{\ast }}\right) \\
& =\sum_{(\nu ,\gamma )\in \Lambda ^{\min }\left( \lambda ,\mu \right)
}\left( S_{\alpha \left( \nu \right) }S_{\alpha \left( \gamma \right) ^{\ast
}}+S_{\beta \left( \nu \right) }S_{\beta \left( \gamma \right) ^{\ast
}}\right) =\sum\limits_{(\rho ,\zeta )\in T\Lambda ^{\min }\left( \alpha
\left( \lambda \right) ,\alpha \left( \mu \right) \right) }S_{\rho }S_{\zeta
^{\ast }}\text{,}
\end{align*}%
as required. Therefore, $S_{\tau ^{\ast }}S_{\omega }=\sum_{(\rho ,\zeta
)\in T\Lambda ^{\min }\left( \tau ,\omega \right) }S_{\rho }S_{\zeta ^{\ast
}}$ for all $\tau ,\omega \in T\Lambda $. Thus the collection $\{S_{\tau
},S_{\omega ^{\ast }}:\tau ,\omega \in T\Lambda \}$ satisfies (CP3).

\medskip

To show that $\left\{ S_{\tau },S_{\omega ^{\ast }}:\tau ,\omega \in
T\Lambda \right\} $ is a Kumjian-Pask $T\Lambda $-family, by Lemma \ref%
{KP-TLambda-when-lambda-has-no-sources}, it suffices to show $\prod_{g\in
\alpha \left( v\right) T\Lambda ^{1}}$ $\left( S_{\alpha \left( v\right)
}-S_{g}S_{g^{\ast }}\right) =0$ for $\alpha \left( v\right) \in T\Lambda
^{0} $. Take $\alpha \left( v\right) \in T\Lambda ^{0}$ and we have
\begin{align}
\prod\limits_{g\in \alpha \left( v\right) T\Lambda ^{1}}& \left( S_{\alpha
\left( v\right) }-S_{g}S_{g^{\ast }}\right)  \label{equ-KP-multiplication} \\
& =\prod\limits_{e\in v\Lambda ^{1}}\left( S_{\alpha \left( v\right)
}-S_{\alpha \left( e\right) }S_{\alpha \left( e\right) ^{\ast }}\right)
\left( S_{\alpha \left( v\right) }-S_{\beta \left( e\right) }S_{\beta \left(
e\right) ^{\ast }}\right)  \notag \\
& =\prod\limits_{e\in v\Lambda ^{1}}\Big(T_{v}F_{T,v}-T_{e}F_{T,s\left(
e\right) }^{2}T_{e^{\ast }}\Big)\Big(T_{v}F_{T,v}-T_{e}\left( T_{s\left(
e\right) }-F_{T,s\left( e\right) }\right) ^{2}T_{e^{\ast }}\Big)  \notag \\
& =\prod\limits_{e\in v\Lambda ^{1}}\left( F_{T,v}-T_{e}\left( T_{s\left(
e\right) }-F_{T,s\left( e\right) }\right) T_{e^{\ast }}-T_{e}F_{T,s\left(
e\right) }T_{e^{\ast }}+T_{e}F_{T,s\left( e\right) }\left( T_{s\left(
e\right) }-F_{T,s\left( e\right) }\right) T_{e^{\ast }}\right)  \notag
\end{align}%
by Lemma \ref{F_t,v}(a,c). Since $F_{T,s\left( e\right) }\left( T_{s\left(
e\right) }-F_{T,s\left( e\right) }\right) =0$ (by Lemma \ref{F_t,v}(c)),
then \eqref{equ-KP-multiplication} becomes%
\begin{align*}
\prod\limits_{g\in \alpha \left( v\right) T\Lambda ^{1}}\left( S_{\alpha
\left( v\right) }-S_{g}S_{g^{\ast }}\right) & =\prod\limits_{e\in v\Lambda
^{1}}\left( F_{T,v}-T_{e}T_{e^{\ast }}\right) \\
& =F_{T,v}\prod\limits_{e\in v\Lambda ^{1}}\left( T_{v}-T_{e}T_{e^{\ast
}}\right) \text{ (by Lemma \ref{F_t,v}(a,c))} \\
& =F_{T,v}\left( T_{v}-F_{T,v}\right) =F_{T,v}T_{v}-F_{T,v}^{2} \\
& =F_{T,v}-F_{T,v}\text{ (by Lemma \ref{F_t,v}(a,c))} \\
& =0\text{.}
\end{align*}%
Then $\left\{ S_{\tau },S_{\omega ^{\ast }}:\tau ,\omega \in T\Lambda
\right\} $ is a Kumjian-Pask $T\Lambda $-family, as required.

Next we show (b). Suppose that $rT_{v}\neq 0$ and $r\prod_{e\in v\Lambda
^{1}}\left( T_{v}-T_{e}T_{e}^{\ast }\right) \neq 0$ for all $v\in \Lambda
^{0}$, and $\pi _{S}:{\normalsize \operatorname{KP}}_{R}\left( T\Lambda \right)
\rightarrow {\normalsize A}$ is the $R$-algebra homomorphism such that $\pi
_{S}\left( s_{\tau }\right) =S_{\tau }$ and $\pi _{S}\left( s_{\omega ^{\ast
}}\right) =S_{\omega ^{\ast }}$ for $\tau ,\omega \in T\Lambda $. We have to
show $\pi _{S}$ is injective. Since $rT_{v}\neq 0$ for all $r\in \left.
R\right\backslash \left\{ 0\right\} $ and $v\in \Lambda ^{0}$, then by Lemma %
\ref{F_t,v}(d), $rF_{T,v}\neq 0$ for all $r\in \left. R\right\backslash
\left\{ 0\right\} $ and $v\in \Lambda ^{0}$. Therefore, for all $r\in \left.
R\right\backslash \left\{ 0\right\} $ and $v\in \Lambda ^{0}$,%
\begin{equation*}
rS_{\alpha \left( v\right) }=rT_{v}F_{T,v}=rF_{T,v}\neq 0
\end{equation*}%
and%
\begin{equation*}
rS_{\beta \left( v\right) }=rT_{v}\left( T_{v}-F_{T,v}\right) =r\left(
T_{v}-F_{T,v}\right) =r\prod_{e\in v\Lambda ^{1}}\left(
T_{v}-T_{e}T_{e^{\ast }}\right) \neq 0.
\end{equation*}%
Hence, $rS_{x}\neq 0$ for all $r\in \left. R\right\backslash \left\{
0\right\} $ and $x\in T\Lambda ^{0}$. Since $T\Lambda $ is aperiodic, then
by Theorem \ref{the-CK-uniqueness-theorem}, $\pi _{S}$ is injective.
\end{proof}

One immediate application of Lemma \ref{S-is-a-KP-family} is:

\begin{theorem}
\label{CP-is-isomorphic-to-KP}Let $\Lambda $ be a row-finite $k$-graph with
no sources and $R$ be a commutative ring with $1$. Suppose that $%
\{t_{\lambda },t_{\mu ^{\ast }}:\lambda ,u\in \Lambda \}$ is the universal
Cohn $\Lambda $-family and $\left\{ s_{\tau },s_{\omega ^{\ast }}:\tau
,\omega \in T\Lambda \right\} $ is the universal Kumjian-Pask $\,T\Lambda $%
-family. For $\tau ,\omega \in T\Lambda $, define%
\begin{align*}
S_{\tau }& :=\left\{
\begin{array}{ccc}
t_{\lambda }F_{t,s\left( \lambda \right) } & \text{if} & \tau =\alpha \left(
\lambda \right) \text{;} \\
t_{\lambda }\left( t_{s\left( \lambda \right) }-F_{t,s\left( \lambda \right)
}\right) & \text{if} & \tau =\beta \left( \lambda \right) \text{,}%
\end{array}%
\right. \\
S_{\omega ^{\ast }}& :=\left\{
\begin{array}{ccc}
F_{t,s\left( \mu \right) }t_{\mu ^{\ast }} & \text{if} & \omega =\alpha
\left( \mu \right) \text{;} \\
\left( t_{s\left( \mu \right) }-F_{t,s\left( \mu \right) }\right) t_{\mu
^{\ast }} & \text{if} & \omega =\beta \left( \mu \right) \text{.}%
\end{array}%
\right. \text{ }
\end{align*}%
Then

\begin{enumerate}
\item[(a)] There exists an $R$-algebra homomorphism $\pi :{\normalsize \operatorname{%
KP}}_{R}\left( T\Lambda \right) \rightarrow {\normalsize \operatorname{C}}_{R}\left(
\Lambda \right) $ such that $\pi \left( s_{\tau }\right) =S_{\tau }$ and $%
\pi \left( s_{\omega ^{\ast }}\right) =S_{\omega ^{\ast }}$ for $\tau
,\omega \in T\Lambda $. Furthermore, $\pi $ is an isomorphism.

\item[(b)] The subsets%
\begin{equation*}
{\normalsize \operatorname{C}}_{R}\left( \Lambda \right) _{n}:=\operatorname{span}%
_{R}\left\{ t_{\lambda }t_{\mu ^{\ast }}:\lambda ,\mu \in \Lambda ,d\left(
\lambda \right) -d\left( \mu \right) =n\right\}
\end{equation*}%
form a $\mathbb{Z}^{k}$-grading of ${\normalsize \operatorname{C}}_{R}\left( \Lambda
\right) $.
\end{enumerate}
\end{theorem}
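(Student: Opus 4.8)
The plan for (a) is to produce $\pi$ from the universal property of $\operatorname{KP}_{R}(T\Lambda)$ and then check bijectivity by hand; for (b) the plan is to transport the known $\mathbb{Z}^{k}$-grading of $\operatorname{KP}_{R}(T\Lambda)$ across the isomorphism $\pi$. First I would apply Lemma \ref{S-is-a-KP-family}(a) to the universal Cohn $\Lambda$-family $\{t_{\lambda},t_{\mu^{\ast}}\}$, which lives inside $A=\operatorname{C}_{R}(\Lambda)$: its conclusion is exactly that the collection $\{S_{\tau},S_{\omega^{\ast}}\}$ displayed in the statement is a Kumjian-Pask $T\Lambda$-family in $\operatorname{C}_{R}(\Lambda)$. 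The universal property of $\operatorname{KP}_{R}(T\Lambda)$ then yields the desired unique homomorphism $\pi$ with $\pi(s_{\tau})=S_{\tau}$ and $\pi(s_{\omega^{\ast}})=S_{\omega^{\ast}}$. For injectivity I would note that Theorem \ref{universal-CP-family}(b) gives $rt_{v}\neq 0$ and $r\prod_{e\in v\Lambda^{1}}(t_{v}-t_{e}t_{e^{\ast}})\neq 0$ for all $r\in R\backslash\{0\}$ and $v\in\Lambda^{0}$; these are precisely the hypotheses of Lemma \ref{S-is-a-KP-family}(b) (taken with $A=\operatorname{C}_{R}(\Lambda)$), whose conclusion is that this $\pi=\pi_{S}$ is injective, the underlying engine being the Cuntz--Krieger uniqueness theorem applied to the aperiodic $k$-graph $T\Lambda$.

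For surjectivity I would show the generators of $\operatorname{C}_{R}(\Lambda)$ lie in the image of $\pi$ via the short identities
\begin{align*}
S_{\alpha(\lambda)}+S_{\beta(\lambda)} &= t_{\lambda}F_{t,s(\lambda)}+t_{\lambda}\bigl(t_{s(\lambda)}-F_{t,s(\lambda)}\bigr)=t_{\lambda}, \\
S_{\alpha(\mu)^{\ast}}+S_{\beta(\mu)^{\ast}} &= F_{t,s(\mu)}t_{\mu^{\ast}}+\bigl(t_{s(\mu)}-F_{t,s(\mu)}\bigr)t_{\mu^{\ast}}=t_{\mu^{\ast}},
\end{align*}
using $t_{\lambda}t_{s(\lambda)}=t_{\lambda}$ and $t_{s(\mu)}t_{\mu^{\ast}}=t_{\mu^{\ast}}$ from (CP1)--(CP2). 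Thus $t_{\lambda}=\pi(s_{\alpha(\lambda)}+s_{\beta(\lambda)})$ and $t_{\mu^{\ast}}=\pi(s_{\alpha(\mu)^{\ast}}+s_{\beta(\mu)^{\ast}})$; since $\{t_{\lambda},t_{\mu^{\ast}}\}$ generates $\operatorname{C}_{R}(\Lambda)$, the image of $\pi$ is everything, so $\pi$ is an isomorphism, completing (a).

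For (b), by \cite[Theorem 3.6]{CP15} the algebra $\operatorname{KP}_{R}(T\Lambda)=\bigoplus_{n}\operatorname{KP}_{R}(T\Lambda)_{n}$ is $\mathbb{Z}^{k}$-graded with $\operatorname{KP}_{R}(T\Lambda)_{n}=\operatorname{span}_{R}\{s_{\tau}s_{\omega^{\ast}}:d(\tau)-d(\omega)=n\}$, where $d(\alpha(\lambda))=d(\beta(\lambda))=d(\lambda)$. Since $\pi$ is an algebra isomorphism, it carries the direct-sum decomposition and the inclusions $\operatorname{KP}_{R}(T\Lambda)_{n}\operatorname{KP}_{R}(T\Lambda)_{m}\subseteq\operatorname{KP}_{R}(T\Lambda)_{n+m}$ over to $\operatorname{C}_{R}(\Lambda)$, so it suffices to prove $\pi(\operatorname{KP}_{R}(T\Lambda)_{n})=\operatorname{C}_{R}(\Lambda)_{n}$ for each $n$. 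For $\subseteq$, I would record that each $F_{t,v}$ and each $t_{v}-F_{t,v}$ expands, via (CP3) and Proposition \ref{properties-of-CP}(a), into a combination of terms $t_{\sigma}t_{\rho^{\ast}}$ with $d(\sigma)=d(\rho)$, hence lies in $\operatorname{C}_{R}(\Lambda)_{0}$; writing $S_{\tau}=t_{\lambda}\,G_{\tau}$ and $S_{\omega^{\ast}}=H_{\omega}\,t_{\mu^{\ast}}$ with $G_{\tau},H_{\omega}\in\operatorname{C}_{R}(\Lambda)_{0}$ and $d(\lambda)=d(\tau)$, $d(\mu)=d(\omega)$, and collapsing products with (CP2), one sees $S_{\tau}S_{\omega^{\ast}}$ is a combination of $t_{\lambda'}t_{\mu'^{\ast}}$ with $d(\lambda')-d(\mu')=d(\tau)-d(\omega)=n$. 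For $\supseteq$, the surjectivity identities give $t_{\lambda}t_{\mu^{\ast}}=\pi\bigl((s_{\alpha(\lambda)}+s_{\beta(\lambda)})(s_{\alpha(\mu)^{\ast}}+s_{\beta(\mu)^{\ast}})\bigr)$, whose preimage is a sum of terms $s_{\tau}s_{\omega^{\ast}}$ with $d(\tau)=d(\lambda)$ and $d(\omega)=d(\mu)$, hence lies in $\operatorname{KP}_{R}(T\Lambda)_{n}$ whenever $d(\lambda)-d(\mu)=n$.

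I expect the main obstacle to be the degree bookkeeping in the forward inclusion of (b): one must verify that expanding the defining products of $F_{t,v}$ never disturbs degrees. This rests on the observation that every factor $t_{e}t_{e^{\ast}}$ is homogeneous of degree $0$ and that the reductions supplied by (CP3) and Proposition \ref{properties-of-CP}(a) are degree-preserving. Once this homogeneity of $F_{t,v}$ and $t_{v}-F_{t,v}$ is established, the equality $\pi(\operatorname{KP}_{R}(T\Lambda)_{n})=\operatorname{C}_{R}(\Lambda)_{n}$, and with it the grading, is a formal consequence of $\pi$ being a graded isomorphism.
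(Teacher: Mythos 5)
Your proposal is correct and follows essentially the same route as the paper: Lemma \ref{S-is-a-KP-family}(a) plus the universal property of $\operatorname{KP}_{R}(T\Lambda)$ to get $\pi$, Theorem \ref{universal-CP-family}(b) with Lemma \ref{S-is-a-KP-family}(b) for injectivity, the identities $t_{\lambda}=\pi(s_{\alpha(\lambda)})+\pi(s_{\beta(\lambda)})$ and $t_{\mu^{\ast}}=\pi(s_{\alpha(\mu)^{\ast}})+\pi(s_{\beta(\mu)^{\ast}})$ for surjectivity, and transport of the $\mathbb{Z}^{k}$-grading across $\pi$ using the degree-zero homogeneity of $F_{t,v}$ and $t_{v}-F_{t,v}$. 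Your treatment of part (b) is somewhat more explicit than the paper's (which only verifies one inclusion in detail), but the underlying argument is the same.
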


\begin{proof}
First we show part (a). By Lemma \ref{S-is-a-KP-family}(a), $\left\{ S_{\tau
},S_{\omega ^{\ast }}:\tau ,\omega \in T\Lambda \right\} $ is a Kumjian-Pask
$T\Lambda $-family and by the universal property of Kumjian-Pask $T\Lambda $%
-family \cite[Theorem 3.7(a)]{CP15}, there exists an $R$-algebra
homomorphism $\pi :{\normalsize \operatorname{KP}}_{R}\left( T\Lambda \right)
\rightarrow {\normalsize \operatorname{C}}_{R}\left( \Lambda \right) $ such that $%
\pi \left( s_{\tau }\right) =S_{\tau }$ and $\pi \left( s_{\omega ^{\ast
}}\right) =S_{\omega ^{\ast }}$ for $\tau ,\omega \in T\Lambda $. On the
other hand, Theorem \ref{universal-CP-family}(b) tells $rt_{v}\neq 0$ and $%
r\prod_{e\in v\Lambda ^{1}}\left( t_{v}-t_{e}t_{e}^{\ast }\right) \neq 0$
for all $r\in \left. R\right\backslash \left\{ 0\right\} $ and $v\in \Lambda
^{0}$. Hence, by Lemma \ref{S-is-a-KP-family}(b), $\pi $ is injective.

Now we show the surjectivity of $\pi $. Since
\begin{equation*}
{\normalsize \operatorname{C}}_{R}\left( \Lambda \right) =\operatorname{span}%
_{R}\{t_{\lambda }t_{\mu ^{\ast }}:\lambda ,u\in \Lambda ,s\left( \lambda
\right) =s\left( \mu \right) \}
\end{equation*}%
(Proposition \ref{properties-of-CP}(b)), it suffices to show that for $%
\lambda ,\mu \in \Lambda $, both $t_{\lambda }$ and $t_{\mu ^{\ast }}$
belong to the image of $\pi $. Take $\lambda ,\mu \in \Lambda $, then we
have
\begin{align}
t_{\lambda }& =t_{\lambda }t_{s\left( \lambda \right) }=t_{\lambda
}F_{t,s\left( \lambda \right) }+t_{\lambda }t_{s\left( \lambda \right)
}-t_{\lambda }F_{t,s\left( \lambda \right) }  \label{eq-surjectivity-t} \\
& =t_{\lambda }F_{t,s\left( \lambda \right) }+t_{\lambda }\left( t_{s\left(
\lambda \right) }-F_{t,s\left( \lambda \right) }\right) =\pi \left(
s_{\alpha \left( \lambda \right) }\right) +\pi \left( s_{\beta \left(
\lambda \right) }\right)  \notag
\end{align}%
and%
\begin{align}
t_{\mu ^{\ast }}& =t_{s\left( \mu \right) }t_{\mu ^{\ast }}=F_{t,s\left( \mu
\right) }t_{\mu ^{\ast }}+t_{s\left( \mu \right) }t_{\mu ^{\ast
}}-F_{t,s\left( \mu \right) }t_{\mu ^{\ast }}  \label{eq-surjectivity-t*} \\
& =F_{t,s\left( \mu \right) }t_{\mu ^{\ast }}+\left( t_{s\left( \mu \right)
}-F_{t,s\left( \mu \right) }\right) t_{\mu ^{\ast }}=\pi \left( s_{\alpha
\left( \mu \right) ^{\ast }}\right) +\pi \left( s_{\beta \left( \mu \right)
^{\ast }}\right) \text{,}  \notag
\end{align}%
as required. Therefore, $\pi $ is an isomorphism.

Next we show part (b). Recall from
\cite[Theorem~3.6(c)]{CP15} that the subsets%
\begin{equation*}
{\normalsize \operatorname{KP}}_{R}\left( T\Lambda \right) _{n}:=\operatorname{span}%
_{R}\left\{ s_{\tau }s_{\omega ^{\ast }}:\tau ,\omega \in T\Lambda ,d\left(
\tau \right) -d\left( \omega \right) =n\right\}
\end{equation*}%
forms a $\mathbb{Z}^{k}$-grading of ${\normalsize \operatorname{KP}}_{R}\left(
\Lambda \right) $. Note that for every $v\in \Lambda ^{0}$, $d\left(
t_{s\left( \lambda \right) }-F_{t,s\left( \lambda \right) }\right)
=0=d\left( F_{t,s\left( \lambda \right) }\right) $. Hence regardless of
whether $\tau $ and $\omega $ are in the form $\alpha \left( \lambda \right)
$ or $\beta \left( \mu \right) $, we have $d\left( \tau \right) -d\left(
\omega \right) =d\left( \lambda \right) -d\left( \mu \right) $ and $s_{\tau
}s_{\omega ^{\ast }}\in {\normalsize \operatorname{C}}_{R}\left( \Lambda \right)
_{n} $ which implies $\pi \left( s_{\tau }s_{\omega ^{\ast }}\right) \in
{\normalsize \operatorname{KP}}_{R}\left( T\Lambda \right) _{n}$. Since $\pi $ is an
isomorphism, then ${\normalsize \operatorname{C}}_{R}\left( \Lambda \right) _{n}$
forms a grading for ${\normalsize \operatorname{C}}_{R}\left( \Lambda \right) $, as
required.
\end{proof}

\begin{remark}
Our Theorem~\ref{CP-is-isomorphic-to-KP} generalises results about Cohn path
algebras associated to 1-graphs. In particular, Theorem \ref%
{CP-is-isomorphic-to-KP}(a) generalises \cite[Theorem 5]{AK13} (which is
also stated in \cite[Theorem 1.5.18]{Leavitt path algebras}); Theorem \ref%
{CP-is-isomorphic-to-KP}(b) generalises \cite[Corollary 2.1.5.(ii)]{Leavitt
path algebras}.
\end{remark}

\begin{proof}[Proof of Theorem \protect\ref%
{the-uniqueness-theorem-of-CP-family}]
Since $\left\{ t_{\lambda },t_{\mu ^{\ast }}:\lambda ,u\in \Lambda \right\} $
is a Cohn $\Lambda $-family, then so is $\{\phi \left( t_{\lambda }\right)
,\phi \left( t_{\mu ^{\ast }}\right) :\lambda ,u\in \Lambda \}$. For $\tau
,\omega \in T\Lambda $, define%
\begin{align*}
S_{\tau }& :=\left\{
\begin{array}{ccc}
\phi \left( t_{\lambda }\right) F_{\phi \left( t\right) ,s\left( \lambda
\right) } & \text{if} & \tau =\alpha \left( \lambda \right) \text{;} \\
\phi \left( t_{\lambda }\right) \left( \phi \left( t_{s\left( \lambda
\right) }\right) -F_{\phi \left( t\right) ,s\left( \lambda \right) }\right)
& \text{if} & \tau =\beta \left( \lambda \right) \text{,}%
\end{array}%
\right. \\
S_{\omega ^{\ast }}& :=\left\{
\begin{array}{ccc}
F_{\phi \left( t\right) ,s\left( u\right) }\phi \left( t_{\mu ^{\ast
}}\right) & \text{if} & \omega =\alpha \left( \mu \right) \text{;} \\
\left( \phi \left( t_{s\left( u\right) }\right) -F_{\phi \left( t\right)
,s\left( u\right) }\right) \phi \left( t_{\mu ^{\ast }}\right) & \text{if} &
\omega =\beta \left( \mu \right) \text{.}%
\end{array}%
\right. \text{ }
\end{align*}%
Lemma \ref{S-is-a-KP-family}(a) tells that $\left\{ S_{\tau },S_{\omega
^{\ast }}:\tau ,\omega \in T\Lambda \right\} $ is a Kumjian-Pask $T\Lambda $%
-family and by the universal property of Kumjian-Pask $T\Lambda $-family,
there exists an $R$-algebra homomorphism $\pi _{S}:{\normalsize \operatorname{KP}}%
_{R}\left( \Lambda \right) \rightarrow {\normalsize A}$ such that $\pi
_{S}\left( s_{\tau }\right) =S_{\tau }$ and $\pi _{S}\left( s_{\omega ^{\ast
}}\right) =S_{\omega ^{\ast }}$ for $\tau ,\omega \in T\Lambda $. On the
other hand, by the hypothesis, $\phi \left( rt_{v}\right) \neq 0$ and $\phi
\left( r\prod_{e\in v\Lambda ^{1}}\left( t_{v}-t_{e}t_{e}^{\ast }\right)
\right) \neq 0$ for all $r\in \left. R\right\backslash \left\{ 0\right\} $
and $v\in \Lambda ^{0}$. Hence, by Lemma \ref{S-is-a-KP-family}(b), $\pi
_{S} $\ is injective.

Now recall from Theorem \ref{CP-is-isomorphic-to-KP}(a) that $\pi :%
{\normalsize \operatorname{KP}}_{R}\left( T\Lambda \right) \rightarrow {\normalsize
\operatorname{C}}_{R}\left( \Lambda \right) $ is an isomorphism with
\begin{align*}
\pi \left( s_{\tau }\right) & =\left\{
\begin{array}{ccc}
t_{\lambda }F_{t,s\left( \lambda \right) } & \text{if} & \tau =\alpha \left(
\lambda \right) \text{;} \\
t_{\lambda }\left( t_{s\left( \lambda \right) }-F_{t,s\left( \lambda \right)
}\right) & \text{if} & \tau =\beta \left( \lambda \right) \text{,}%
\end{array}%
\right. \\
\pi \left( s_{\omega ^{\ast }}\right) & =\left\{
\begin{array}{ccc}
F_{t,s\left( \mu \right) }t_{\mu ^{\ast }} & \text{if} & \omega =\alpha
\left( \mu \right) \text{;} \\
\left( t_{s\left( \mu \right) }-F_{t,s\left( \mu \right) }\right) t_{\mu
^{\ast }} & \text{if} & \omega =\beta \left( \mu \right) \text{.}%
\end{array}%
\right. \text{ }
\end{align*}%
Note that for $\lambda ,\mu \in \Lambda $, we have $t_{\lambda }=\pi \left(
s_{\alpha \left( \lambda \right) }\right) +\pi \left( s_{\beta \left(
\lambda \right) }\right) $ and $t_{\mu ^{\ast }}=\pi \left( s_{\alpha \left(
\mu \right) ^{\ast }}\right) +\pi \left( s_{\beta \left( \mu \right) ^{\ast
}}\right) $ (see \eqref{eq-surjectivity-t} and \eqref{eq-surjectivity-t*}).
Hence,%
\begin{align*}
\left( \pi _{S}\circ \pi ^{-1}\right) \left( t_{\lambda }\right) & =\left(
\pi _{S}\circ \pi ^{-1}\right) \left( \pi \left( s_{\alpha \left( \lambda
\right) }\right) +\pi \left( s_{\beta \left( \lambda \right) }\right)
\right) =\pi _{S}\left( s_{\alpha \left( \lambda \right) }\right) +\pi
_{S}\left( s_{\alpha \left( \lambda \right) }\right) \\
& =S_{\alpha \left( \lambda \right) }+S_{\alpha \left( \lambda \right)
}=\phi \left( t_{\lambda }\right) F_{\phi \left( t\right) ,s\left( \lambda
\right) }+\phi \left( t_{\lambda }\right) \left( \phi \left( t_{s\left(
\lambda \right) }\right) -F_{\phi \left( t\right) ,s\left( \lambda \right)
}\right) \\
& =\phi \left( t_{\lambda }\right) \text{.}
\end{align*}%
and%
\begin{align*}
\left( \pi _{S}\circ \pi ^{-1}\right) \left( t_{\mu ^{\ast }}\right) &
=\left( \pi _{S}\circ \pi ^{-1}\right) \left( \pi \left( s_{\alpha \left(
\mu \right) ^{\ast }}\right) +\pi \left( s_{\beta \left( \mu \right) ^{\ast
}}\right) \right) =\pi _{S}\left( s_{\alpha \left( \mu \right) ^{\ast
}}\right) +\pi _{S}\left( s_{\beta \left( \mu \right) ^{\ast }}\right) \\
& =S_{\alpha \left( \mu \right) ^{\ast }}+S_{\alpha \left( \mu \right)
^{\ast }}=F_{\phi \left( t\right) ,s\left( \mu \right) }\phi \left( t_{\mu
^{\ast }}\right) +\left( \phi \left( t_{s\left( \mu \right) }\right)
-F_{\phi \left( t\right) ,s\left( \mu \right) }\right) \phi \left( t_{\mu
^{\ast }}\right) \\
& =\phi \left( t_{\mu ^{\ast }}\right) \text{.}
\end{align*}%
These imply $\phi =\pi _{S}\circ \pi ^{-1}$ since ${\normalsize \operatorname{C}}%
_{R}\left( \Lambda \right) =\operatorname{span}_{R}\{t_{\lambda }t_{\mu ^{\ast
}}:\lambda ,u\in \Lambda ,s\left( \lambda \right) =s\left( \mu \right) \}$
(Proposition \ref{properties-of-CP}(b)). Furthermore, the injectivity of
both $\pi ^{-1}$ and $\pi _{S}$ imply that $\phi $ is also injective.
\end{proof}

\begin{remark}
The Cohn $\Lambda $-family $\left\{ T_{\lambda },T_{\mu ^{\ast }}:\lambda
,u\in \Lambda \right\} $ as constructed in Proposition \ref%
{the-path-representation} satisfies $rT_{v}\neq 0$ and $r\prod_{e\in
v\Lambda ^{1}}\left( T_{v}-T_{e}T_{e}^{\ast }\right) \neq 0$ for all $r\in
\left. R\right\backslash \left\{ 0\right\} $ and $v\in \Lambda ^{0}$. Hence
Theorem \ref{the-uniqueness-theorem-of-CP-family} tells that the $R$-algebra
homomorphism $\phi _{T}:{\normalsize \operatorname{C}}_{R}\left( \Lambda \right)
\rightarrow \operatorname{End}\left( \mathbb{F}_{R}\left( W_{\Lambda }\right)
\right) $ such that $\phi _{T}\left( t_{\lambda }\right) =T_{\lambda }$ and $%
\phi _{T}\left( t_{\mu ^{\ast }}\right) =T_{\mu ^{\ast }}$ for $\lambda ,\mu
\in \Lambda $, is injective.
\end{remark}

\begin{remark}
Note that when $\Lambda$ is a 1-graph, that is, when $k=1$, then $\Lambda$
is the path category of a directed graph $E$. One consequence of Theorem~\ref%
{universal-CP-family} and Theorem~\ref{the-CK-uniqueness-theorem} is that
the universal Cohn algebra $\operatorname{C}_{R}(\Lambda)$ that we have constructed
is isomorphic to the Cohn path algebra associated to $E$ as defined in \cite[%
Definition~1.5.1]{Leavitt path algebras}. Since \cite[Definition~1.5.1]%
{Leavitt path algebras} only considers the situation where $R$ is a field,
our construction gives a generalisation of the Cohn path algebra to the
setting where $R$ is an arbitrary commutative ring with 1.
\end{remark}

\section{Examples and Applications}

\label{Section-examples-and-applications}

\subsection{Higher-rank graph Toeplitz algebras.}

As mentioned in the introduction of Section \ref%
{Section-the-uniqueness-theorem-of-CP-family}, the uniqueness theorem for
Cohn path algebras (Theorem \ref{the-uniqueness-theorem-of-CP-family}) is an
analogue of the uniqueness theorem for Toeplitz algebras \cite[Theorem 8.1]%
{RS05}. We show that if $\Lambda $ is a row-finite $k$-graph with no
sources, then its Cohn path algebra over the complex numbers is dense in the
Toeplitz algebra associated to $\Lambda $ (Proposition \ref%
{CP-is-dense-in-TC}). First we give some preliminaries on
Toeplitz-Cuntz-Krieger $\Lambda $-families and Toeplitz algebras.

Suppose that $\Lambda $ is a row-finite $k$-graph with no sources. A \emph{%
Toeplitz-Cuntz-Krieger }$\Lambda $\emph{-family} is a collection $\left\{
Q_{\lambda }:\lambda \in \Lambda \right\} $ of partial isometries in a $%
C^{\ast }$-algebra $B$ satisfying:

\begin{enumerate}
\item[(TCK1)] $\left\{ Q_{v}:v\in \Lambda ^{0}\right\} $ is a collection of
mutually orthogonal projections;

\item[(TCK2)] $Q_{\lambda }Q_{\mu }=Q_{\lambda \mu }$ whenever $s\left(
\lambda \right) =r\left( \mu \right) $; and

\item[(TCK3)] $Q_{\lambda }^{\ast }Q_{\mu }=\sum_{(\nu ,\gamma )\in \Lambda
^{\min }\left( \lambda ,\mu \right) }Q_{\nu }Q_{\gamma }^{\ast }$ for all $%
\lambda ,\mu \in \Lambda $.
\end{enumerate}

\begin{remark}
In \cite[Lemma 9.2]{RS05}, a Toeplitz-Cuntz-Krieger $\Lambda $-family is
defined to be a collection of partial isometries $\left\{ Q_{\lambda
}:\lambda \in \Lambda \right\} $ satisfying (TCK1-3) and an additional
condition: for all $m\in \mathbb{N}^{k}\backslash \left\{ 0\right\} $, $v\in
\Lambda ^{0}$ and every set $E\subseteq v\Lambda ^{m}$, $Q_{v}\geq
\sum_{\lambda \in E}Q_{\lambda }Q_{\lambda }^{\ast }$. However, by \cite[%
Lemma 2.7 (iii)]{RSY04}, this additional condition is a direct consequence
of (TCK1-3) and hence our definition is equivalent to that of \cite{RS05}.
\end{remark}

For a row-finite $k$-graph $\Lambda $, there exists a $C^{\ast }$-algebra $%
{\normalsize TC}^{\ast }\left( \Lambda \right) $, called the \emph{Toeplitz
algebra }of $\Lambda $, generated by the universal Toeplitz-Cuntz-Krieger $%
\Lambda $-family $\left\{ q_{\lambda }:\lambda \in \Lambda \right\} $.
Furthermore, for $v\in \Lambda ^{0}$, we have $q_{v}\neq 0$ and $\prod_{e\in
v\Lambda ^{1}}\left( q_{v}-q_{e}q_{e}^{\ast }\right) \neq 0$ \cite[Corollary
3.7.7]{Si03}.

\begin{proposition}
\label{CP-is-dense-in-TC}Let $\Lambda $ be a row-finite $k$-graph with no
sources. Suppose that $\left\{ q_{\lambda }:\lambda \in \Lambda \right\} $
is the universal Toeplitz-Cuntz-Krieger $\Lambda $-family and $\left\{
t_{\lambda },t_{\mu ^{\ast }}:\lambda ,u\in \Lambda \right\} $ is the
universal (complex) Cohn $\Lambda $-family. Then there is an isomorphism
\begin{equation*}
\phi _{q}:{\normalsize \operatorname{C}}_{\mathbb{C}}\left( \Lambda \right)
\rightarrow \operatorname{span}\{q_{\lambda }q_{\mu }^{\ast }:\lambda ,\mu \in
\Lambda \}
\end{equation*}%
such that $\phi _{q}\left( t_{\lambda }\right) =q_{\lambda }$ and $\phi
_{q}\left( t_{\mu ^{\ast }}\right) =q_{\mu }^{\ast }$ for $\lambda ,u\in
\Lambda $. In particular, ${\normalsize \operatorname{C}}_{\mathbb{C}}\left( \Lambda
\right) $ is isomorphic to a dense subalgebra of $TC^{\ast }\left( \Lambda
\right) $.
\end{proposition}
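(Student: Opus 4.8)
The plan is to realise $\{q_{\lambda},q_{\mu}^{\ast}\}$ as a Cohn $\Lambda$-family, obtain $\phi_{q}$ from the universal property of $\operatorname{C}_{\CC}(\Lambda)$, and then deduce injectivity from the uniqueness theorem. First I would set $T_{\lambda}:=q_{\lambda}$ and $T_{\mu^{\ast}}:=q_{\mu}^{\ast}$ (the $C^{\ast}$-adjoint) and verify (CP1--3). Relations (CP1) and (CP3) are verbatim (TCK1) and (TCK3), and the first equation of (CP2) is (TCK2); the second equation $T_{\mu^{\ast}}T_{\lambda^{\ast}}=T_{(\lambda\mu)^{\ast}}$ is obtained by taking $C^{\ast}$-adjoints in (TCK2), since $q_{\mu}^{\ast}q_{\lambda}^{\ast}=(q_{\lambda}q_{\mu})^{\ast}=q_{\lambda\mu}^{\ast}$. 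Hence $\{q_{\lambda},q_{\mu}^{\ast}\}$ is a Cohn $\Lambda$-family in $TC^{\ast}(\Lambda)$.

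Next, by Theorem \ref{universal-CP-family}(a) there is a unique $\CC$-algebra homomorphism $\phi_{q}\colon\operatorname{C}_{\CC}(\Lambda)\to TC^{\ast}(\Lambda)$ with $\phi_{q}(t_{\lambda})=q_{\lambda}$ and $\phi_{q}(t_{\mu^{\ast}})=q_{\mu}^{\ast}$; its image is the subalgebra generated by $\{q_{\lambda},q_{\mu}^{\ast}\}$. By Proposition \ref{properties-of-CP}(b) this subalgebra equals $\operatorname{span}\{q_{\lambda}q_{\mu}^{\ast}:s(\lambda)=s(\mu)\}$. Since $q_{\lambda}q_{\mu}^{\ast}=q_{\lambda}q_{s(\lambda)}q_{s(\mu)}q_{\mu}^{\ast}=0$ whenever $s(\lambda)\ne s(\mu)$, by orthogonality of the vertex projections (TCK1), this span coincides with $\operatorname{span}\{q_{\lambda}q_{\mu}^{\ast}:\lambda,\mu\in\Lambda\}$, so $\phi_{q}$ is surjective onto the stated target.

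For injectivity I would apply Theorem \ref{the-uniqueness-theorem-of-CP-family}, whose hypotheses I must check: for every $r\in\CC\setminus\{0\}$ and $v\in\Lambda^{0}$ one has $\phi_{q}(rt_{v})=rq_{v}\ne0$ and $\phi_{q}\bigl(r\prod_{e\in v\Lambda^{1}}(t_{v}-t_{e}t_{e^{\ast}})\bigr)=r\prod_{e\in v\Lambda^{1}}(q_{v}-q_{e}q_{e}^{\ast})\ne0$. Both hold because $q_{v}\ne0$ and $\prod_{e\in v\Lambda^{1}}(q_{v}-q_{e}q_{e}^{\ast})\ne0$ in the $C^{\ast}$-algebra $TC^{\ast}(\Lambda)$ (as recorded just before the statement), and multiplication by a nonzero complex scalar preserves nonvanishing. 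Thus $\phi_{q}$ is injective, and combined with surjectivity it is the desired isomorphism onto $\operatorname{span}\{q_{\lambda}q_{\mu}^{\ast}:\lambda,\mu\in\Lambda\}$.

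Finally, the \emph{in particular} assertion follows since the universal Toeplitz--Cuntz--Krieger family generates $TC^{\ast}(\Lambda)$ and $TC^{\ast}(\Lambda)=\overline{\operatorname{span}}\{q_{\lambda}q_{\mu}^{\ast}:\lambda,\mu\in\Lambda\}$; hence the range of $\phi_{q}$ is dense. I expect the only delicate bookkeeping to be the identification of the image: confirming through Proposition \ref{properties-of-CP}(b) and the orthogonality of vertex projections that the generated subalgebra is exactly the claimed span. The remaining steps are direct translations of the $C^{\ast}$-relations into the Cohn relations together with a single application of the uniqueness theorem.
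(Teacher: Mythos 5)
Your proposal is correct and follows essentially the same route as the paper: realise $\{q_{\lambda},q_{\mu}^{\ast}\}$ as a Cohn $\Lambda$-family, invoke the universal property to get $\phi_{q}$ onto $\operatorname{span}\{q_{\lambda}q_{\mu}^{\ast}\}$, and apply Theorem \ref{the-uniqueness-theorem-of-CP-family} using the nonvanishing of $q_{v}$ and $\prod_{e\in v\Lambda^{1}}(q_{v}-q_{e}q_{e}^{\ast})$. You merely spell out details the paper leaves implicit (the verification of (CP1--3) and the identification of the image via Proposition \ref{properties-of-CP}(b)), which is fine.
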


\begin{proof}
Since $\left\{ q_{\lambda }:\lambda \in \Lambda \right\} $ satisfies
(TCK1-3), then by putting $q_{\lambda }:=q_{\lambda }$ and $q_{\mu ^{\ast
}}:=q_{\mu }^{\ast }$, the collection $\left\{ q_{\lambda },q_{\mu ^{\ast
}}:\lambda ,\mu \in \Lambda \right\} $ is a Cohn $\Lambda $-family in $%
TC^{\ast }\left( \Lambda \right) $. Thus the universal property of $%
{\normalsize \operatorname{C}}_{\mathbb{C}}\left( \Lambda \right) $ gives a
homomorphism $\phi _{q}$ from ${\normalsize \operatorname{C}}_{\mathbb{C}}\left(
\Lambda \right) $ onto the dense subalgebra%
\begin{equation*}
A:=\operatorname{span}_{\mathbb{C}}\{q_{\lambda }q_{\mu }^{\ast }:\lambda ,\mu \in
\Lambda \}
\end{equation*}%
of $TC^{\ast }\left( \Lambda \right) $.

On the other hand, for all $r\in \left. \mathbb{C}\right\backslash \left\{
0\right\} $ and $v\in \Lambda ^{0}$, we have $\frac{1}{r}\phi _{q}\left(
rt_{v}\right) =q_{v}\neq 0$ and
\begin{equation*}
\frac{1}{r}\phi _{q}\Big(r\prod_{e\in v\Lambda ^{1}}\left(
t_{v}-t_{e}t_{e^{\ast }}\right) \Big)=\prod_{e\in v\Lambda ^{1}}\left(
q_{v}-q_{e}q_{e}^{\ast }\right) \neq 0\text{.}
\end{equation*}%
So $\phi _{q}\left( rt_{v}\right) \neq 0$ and $\phi _{q}\Big(r\prod_{e\in
v\Lambda ^{1}}\left( t_{v}-t_{e}t_{e^{\ast }}\right) \Big)\neq 0$ for all $%
r\in \left. \mathbb{C}\right\backslash \left\{ 0\right\} $ and $v\in \Lambda
^{0}$. Then by Theorem \ref{the-uniqueness-theorem-of-CP-family}, $\phi _{q}$
is injective.
\end{proof}

\begin{remark}
For $k=1$, Proposition \ref{CP-is-dense-in-TC} tells that the Cohn path
algebra of $1$-graph $E$ is isomorphic to a dense subalgebra of $TC^{\ast
}\left( E\right) $.
\end{remark}

\subsection{Groupoids and Steinberg algebras.}

In \cite[Proposition 5.4]{CP15}, the authors show that each Kumjian-Pask
algebra is isomorphic to a Steinberg algebra. Thus Theorem \ref%
{CP-is-isomorphic-to-KP} implies that the Cohn path algebra of $\Lambda $ is
also isomorphic to a Steinberg algebra associated to $T\Lambda $. However,
this is somewhat obscure because one has to go through $T\Lambda $. We
improve this result by showing that there exists a groupoid associated to $%
\Lambda $ such that its Steinberg algebra is isomorphic to the Cohn path
algebra of $\Lambda $ (Proposition \ref%
{CP-is-isomorphic-to-Steinberg-algebras}). We start out with an introduction
to groupoids and Steinberg algebras in general.

A groupoid $\mathcal{G}$ is a small category with inverses. We write $%
\mathcal{G}^{\left( 0\right) }$ for the set $\left\{ aa^{-1}:a\in \mathcal{G}%
\right\} $, and we denote $r$ and $s$ the range and source maps $r,s:%
\mathcal{G}\rightarrow \mathcal{G}^{\left( 0\right) }$. We also write $%
\mathcal{G}^{\left( 2\right) }$ for the set of pairs $\left( a,b\right) \in
\mathcal{G}\times \mathcal{G}$ with $s\left( a\right) =r\left( b\right) $,
and for $A,B\subseteq \mathcal{G}$,%
\begin{equation*}
AB:=\left\{ ab:a\in A,b\in B,\left( a,b\right) \in \mathcal{G}^{\left(
2\right) }\right\} \text{.}
\end{equation*}

We say $\mathcal{G}$ is \emph{topological }if $\mathcal{G}$ is endowed with
a topology such that the range, source, and composition maps are continuous.
We call an open set $U\subseteq \mathcal{G}$ \emph{open bisection }if $%
s|_{U} $ and $r|_{U}$ are homeomorhisms. Finally, a groupoid $\mathcal{G}$
is \emph{ample} if it has a basis of compact open bisections.

Suppose that $\mathcal{G}$ is Hausdorff ample groupoid and $R$ is a
commutative ring with $1$. The Steinberg algebra of $\mathcal{G}$, denoted $%
A_{R}\left( \mathcal{G}\right) $, is the set of all functions from $\mathcal{%
G}$ to $R$ that are locally constant and have compact support (see \cite%
{CE-M15,CFST14,St10}). Addition and scalar multiplication of $A_{R}\left(
\mathcal{G}\right) $ are defined pointwise, and convolution is given by%
\begin{equation*}
\left( f\star g\right) \left( a\right) :=\sum_{r\left( a\right) =r\left(
b\right) }f\left( b\right) g\left( b^{-1}a\right) \text{.}
\end{equation*}%
Furthermore, for compact open bisections $U$ and $V$, we have%
\begin{equation*}
1_{U}\star 1_{V}=1_{UV}\text{.}
\end{equation*}

\begin{example}
\label{groupouid-TGlambda}Suppose that $\Lambda $ is a row-finite $k$-graph
with no sources. Following \cite[Definition 3.4]{Y07}, we construct an ample
groupoid as follows. Write%
\begin{equation*}
\Lambda \ast _{s}\Lambda :=\left\{ \left( \lambda ,\mu \right) \in \Lambda
\times \Lambda :s\left( \lambda \right) =s\left( \mu \right) \right\} .
\end{equation*}%
For $\left( \lambda ,\mu \right) \in \Lambda \ast _{s}\Lambda $ and finite
subset $G\subseteq s\left( \lambda \right) \Lambda $, we write%
\begin{equation*}
TZ_{\Lambda }\left( \lambda \right) :=\lambda W_{\Lambda }\text{,}
\end{equation*}%
\begin{equation*}
TZ_{\Lambda }\left( \left. \lambda \right\backslash G\right) :=\left.
TZ_{\Lambda }\left( \lambda \right) \right\backslash \Big(\bigcup_{\nu \in
G}TZ_{\Lambda }\left( \lambda \nu \right) \Big)\text{,}
\end{equation*}%
\begin{align*}
TZ_{\Lambda }\left( \lambda \ast _{s}\mu \right) & :=\{\left( x,d\left(
\lambda \right) -d\left( \mu \right) ,y\right) \in \mathcal{TG}_{\Lambda
}:x\in TZ_{\Lambda }\left( \lambda \right) ,y\in TZ_{\Lambda }\left( \mu
\right) \\
& \quad \quad \quad \quad \quad \text{ and }\sigma ^{d\left( \lambda \right)
}x=\sigma ^{d\left( \mu \right) }y\}
\end{align*}%
and%
\begin{equation*}
TZ_{\Lambda }\left( \left. \lambda \ast _{s}\mu \right\backslash G\right)
:=\left. TZ_{\Lambda }\left( \lambda \ast _{s}\mu \right) \right\backslash %
\Big(\bigcup_{\nu \in G}TZ_{\Lambda }\left( \lambda \nu \ast _{s}\mu \nu
\right) \Big).
\end{equation*}%
Then $\mathcal{TG}_{\Lambda }$ is a groupoid, called the \emph{path groupoid}%
, with object set
\begin{equation*}
\operatorname{Obj}\left( \mathcal{TG}_{\Lambda }\right) :=W_{\Lambda }\text{,}
\end{equation*}%
morphism set
\begin{align*}
\operatorname{Mor}\left( \mathcal{TG}_{\Lambda }\right) & :=\{\left( \lambda
z,d\left( \lambda \right) -d\left( \mu \right) ,\mu z\right) \in W_{\Lambda
}\times \mathbb{Z}^{k}\times W_{\Lambda }: \\
& \quad \quad \quad \quad \quad \quad \left( \lambda ,\mu \right) \in
\Lambda \ast _{s}\Lambda ,z\in s\left( \lambda \right) W_{\Lambda }\} \\
& =\{\left( x,m,y\right) \in W_{\Lambda }\times \mathbb{Z}^{k}\times
W_{\Lambda }:\text{there exists }p,q\in \mathbb{N}^{k}\text{ such that} \\
& \quad \quad \quad \quad \quad \quad p\leq d\left( x\right) ,q\leq d\left(
y\right) ,p-q=m\text{ and }\sigma ^{p}x=\sigma ^{q}y\}\text{\thinspace ,}
\end{align*}%
range and source maps $r\left( x,m,y\right) :=x$ and $s\left( x,m,y\right)
:=y$, composition%
\begin{equation*}
\left( \left( x_{1},m_{1},y_{1}\right) ,\left( y_{1},m_{2},y_{2}\right)
\right) \mapsto \left( x_{1},m_{1}+m_{2},y_{2}\right) \text{,}
\end{equation*}%
and inversion $\left( x,m,y\right) \mapsto \left( y,-m,x\right) $.
Furthermore, the sets $TZ_{\Lambda }\left( \left. \lambda \right\backslash
G\right) $ form a basis of compact open sets for $\mathcal{TG}_{\Lambda
}^{\left( 0\right) }$, and the sets $TZ_{\Lambda }\left( \left. \lambda \ast
_{s}\mu \right\backslash G\right) $ form a basis of compact open sets for
locally-compact, second-countable, Hausdorff topology on $\mathcal{TG}%
_{\Lambda }$ under which it is an \'{e}tale topological groupoid. Since for $%
\left( \lambda ,\mu \right) \in \Lambda \ast _{s}\Lambda $ and finite subset
$G\subseteq s\left( \lambda \right) \Lambda $, $TZ_{\Lambda }\left( \left.
\lambda \ast _{s}\mu \right\backslash G\right) $ is a bijection, then $%
\mathcal{TG}_{\Lambda }$ is also ample.
\end{example}

\begin{remark}
We think of $\mathcal{TG}_{\Lambda }^{0}=W_{\Lambda }$ as a subset of $%
\mathcal{TG}_{\Lambda }$ under the correspondence $x\mapsto \left(
x,0,x\right) $.
\end{remark}

\begin{proposition}
Let $\Lambda $ be a row-finite $k$-graph with no sources. Then the path
groupoid $\mathcal{TG}_{\Lambda }$ is \emph{effective}, in the sense that
the interior of%
\begin{equation*}
\operatorname{Iso}\left( \mathcal{TG}_{\Lambda }\right) :=\left\{ a\in \mathcal{TG}%
_{\Lambda }:s\left( a\right) =r\left( a\right) \right\}
\end{equation*}%
is $\mathcal{TG}_{\Lambda }^{\left( 0\right) }$.
\end{proposition}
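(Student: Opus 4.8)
The plan is to exploit the basis of compact open bisections $TZ_{\Lambda}(\lambda \ast_s \mu \setminus G)$ described above, together with the fact that, since $\mathcal{TG}_{\Lambda}$ is \'etale, its unit space $\mathcal{TG}_{\Lambda}^{\left(0\right)}$ is open. Because every unit satisfies $r=s$ we have $\mathcal{TG}_{\Lambda}^{\left(0\right)} \subseteq \operatorname{Iso}(\mathcal{TG}_{\Lambda})$, and openness then gives $\mathcal{TG}_{\Lambda}^{\left(0\right)} \subseteq \operatorname{int}\big(\operatorname{Iso}(\mathcal{TG}_{\Lambda})\big)$. The real content is the reverse inclusion, and for this it suffices to prove the following: whenever a \emph{nonempty} basic open set $TZ_{\Lambda}(\lambda \ast_s \mu \setminus G)$ is contained in $\operatorname{Iso}(\mathcal{TG}_{\Lambda})$, it is already contained in $\mathcal{TG}_{\Lambda}^{\left(0\right)}$. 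Indeed, $\operatorname{int}\big(\operatorname{Iso}(\mathcal{TG}_{\Lambda})\big)$ is a union of such basic sets, so the claim forces it inside $\mathcal{TG}_{\Lambda}^{\left(0\right)}$.

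The key observation is that the finite path $\lambda$, viewed as the point of $W_{\Lambda}$ of degree exactly $d(\lambda)$, is an extremal element of $TZ_{\Lambda}(\lambda)$ that cannot display nontrivial periodicity. So first I would single out the element
\[
a:=\big(\lambda,\,d(\lambda)-d(\mu),\,\mu\big).
\]
Since $(\lambda,\mu)\in\Lambda\ast_s\Lambda$ we have $\sigma^{d(\lambda)}\lambda=s(\lambda)=s(\mu)=\sigma^{d(\mu)}\mu$, so $a\in TZ_{\Lambda}(\lambda\ast_s\mu)$. The first real step is to check that $a$ is not removed by any of the sets $TZ_{\Lambda}(\lambda\nu\ast_s\mu\nu)$ with $\nu\in G$; this is where a degree count via the factorisation property enters. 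If $\lambda\in TZ_{\Lambda}(\lambda\nu)=\lambda\nu W_{\Lambda}$, then $\lambda=\lambda\nu z$ for some $z$, and comparing degrees forces $d(\nu)=0$, i.e.\ $\nu=s(\lambda)$; but then $TZ_{\Lambda}(\lambda\cdot s(\lambda))=TZ_{\Lambda}(\lambda)$ would have been removed and the basic set would be empty, contrary to assumption. Hence $a\in TZ_{\Lambda}(\lambda\ast_s\mu\setminus G)$.

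Once $a$ lies in the set, the hypothesis that the set is contained in $\operatorname{Iso}(\mathcal{TG}_{\Lambda})$ yields $r(a)=s(a)$, that is $\lambda=\mu$ as elements of $W_{\Lambda}$; in particular $d(\lambda)=d(\mu)$, so the middle coordinate of every element of the set is $0$. It then remains to see that $TZ_{\Lambda}(\lambda\ast_s\lambda)\subseteq\mathcal{TG}_{\Lambda}^{\left(0\right)}$: any element $(x,0,y)$ of this set has $x=\lambda x'$ and $y=\lambda y'$ with $\sigma^{d(\lambda)}x=x'$ and $\sigma^{d(\lambda)}y=y'$, so the defining relation $\sigma^{d(\lambda)}x=\sigma^{d(\lambda)}y$ gives $x'=y'$ and hence $x=y$, making $(x,0,x)$ a unit. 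This establishes $TZ_{\Lambda}(\lambda\ast_s\mu\setminus G)\subseteq\mathcal{TG}_{\Lambda}^{\left(0\right)}$ and finishes the argument.

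I expect the only delicate point to be the first step above: verifying that the minimal point $\lambda$ genuinely survives the excision by $G$ and therefore that $a$ really belongs to the basic open set. The remainder is a direct bookkeeping exercise with concatenation and the shift maps $\sigma^{n}$, using only the factorisation property.
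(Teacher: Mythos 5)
Your proof is correct and follows essentially the same route as the paper's: both arguments single out the witness $\left( \lambda ,d\left( \lambda \right) -d\left( \mu \right) ,\mu \right) $ inside a basic open set contained in $\operatorname{Iso}\left( \mathcal{TG}_{\Lambda }\right) $, observe via a degree count that it survives the excision by $G$ (else the set would be empty), and conclude $\lambda =\mu $. You merely make explicit two steps the paper leaves implicit, namely the openness of the unit space and the final verification that $TZ_{\Lambda }\left( \lambda \ast _{s}\lambda \right) $ lies in $\mathcal{TG}_{\Lambda }^{\left( 0\right) }$.
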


\begin{proof}
For $x\in \mathcal{TG}_{\Lambda }^{\left( 0\right) }$, we have $\left(
x,0,x\right) \in \operatorname{Iso}\left( \mathcal{TG}_{\Lambda }\right) $ and then $%
\mathcal{TG}_{\Lambda }^{\left( 0\right) }$ belongs to the interior of $%
\operatorname{Iso}\left( \mathcal{TG}_{\Lambda }\right) $. Now we show the reverse
inclusion. Take $a$ an interior point of $\operatorname{Iso}\left( \mathcal{TG}%
_{\Lambda }\right) $. Then there exits $TZ_{\Lambda }\left( \left. \lambda
\ast _{s}\mu \right\backslash G\right) $ such that $TZ_{\Lambda }\left(
\left. \lambda \ast _{s}\mu \right\backslash G\right) \subseteq \operatorname{Iso}%
\left( \mathcal{TG}_{\Lambda }\right) $ and $a\in TZ_{\Lambda }\left( \left.
\lambda \ast _{s}\mu \right\backslash G\right) $. We have to show $\lambda
=\mu $. Since $a\in TZ_{\Lambda }\left( \left. \lambda \ast _{s}\mu
\right\backslash G\right) $, then $TZ_{\Lambda }\left( \left. \lambda \ast
_{s}\mu \right\backslash G\right) $ is not empty; so $s\left( \lambda
\right) \notin G$. Hence, $\left( \lambda ,d\left( \lambda \right) -d\left(
\mu \right) ,\mu \right) \in TZ_{\Lambda }\left( \left. \lambda \ast _{s}\mu
\right\backslash G\right) $ and since $TZ_{\Lambda }\left( \left. \lambda
\ast _{s}\mu \right\backslash G\right) \subseteq \operatorname{Iso}\left( \mathcal{TG%
}_{\Lambda }\right) $, this implies $\lambda =\mu $. Therefore, $\mathcal{TG}%
_{\Lambda }$ is effective.
\end{proof}

\begin{remark}
Our definition of effectiveness is from Lemma 3.1 of \cite{BCFS14}. That
lemma gives several equivalent characterisations of effective.
\end{remark}

\begin{proposition}
\label{CP-is-isomorphic-to-Steinberg-algebras}Let $\Lambda $ be a row-finite
$k$-graph with no sources, $\mathcal{TG}_{\Lambda }$ be its path groupoid
and $R$ be a commutative ring with $1$. Then there is an isomorphism $\phi
_{Q}:{\normalsize \operatorname{C}}_{R}\left( \Lambda \right) \rightarrow
A_{R}\left( \mathcal{G}_{\Lambda }\right) $ such that $\phi _{Q}\left(
t_{\lambda }\right) =1_{TZ_{\Lambda }\left( \lambda \ast _{s}s\left( \lambda
\right) \right) }$ and $\phi _{Q}\left( t_{\mu ^{\ast }}\right)
=1_{TZ_{\Lambda }\left( s\left( \mu \right) \ast _{s}\mu \right) }$ for $%
\lambda ,u\in \Lambda $.
\end{proposition}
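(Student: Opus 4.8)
The plan is to realise $\{Q_\lambda, Q_{\mu^*}\}$, defined by $Q_\lambda := 1_{TZ_\Lambda(\lambda\ast_s s(\lambda))}$ and $Q_{\mu^*} := 1_{TZ_\Lambda(s(\mu)\ast_s\mu)}$, as a Cohn $\Lambda$-family in $A_R(\mathcal{TG}_\Lambda)$; invoke the universal property (Theorem \ref{universal-CP-family}) to obtain $\phi_Q$; and then prove $\phi_Q$ is bijective by establishing surjectivity directly and injectivity via the uniqueness theorem (Theorem \ref{the-uniqueness-theorem-of-CP-family}). The first step is to record the concrete form of the two generating bisections from the morphism description in Example \ref{groupouid-TGlambda}:
\begin{equation*}
TZ_\Lambda(\lambda\ast_s s(\lambda)) = \{(\lambda z, d(\lambda), z) : z \in s(\lambda)W_\Lambda\}, \qquad TZ_\Lambda(s(\mu)\ast_s\mu) = \{(w, -d(\mu), \mu w) : w \in s(\mu)W_\Lambda\}.
\end{equation*}

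Using the product rule $1_U\star 1_V = 1_{UV}$ for compact open bisections, the verification of (CP1)--(CP3) reduces to computing products $UV$ of these bisections. Condition (CP1) holds because each $Q_v$ is the indicator of a subset of the unit space, and distinct vertices give disjoint such subsets; (CP2) follows by composing the defining triples, where concatenation of the path coordinates reproduces $Q_{\lambda\mu}$ and $Q_{(\lambda\mu)^*}$. The computational heart, and the step I expect to be the main obstacle, is (CP3). Composing the bisection for $Q_{\lambda^*}$ with that for $Q_\mu$, a composable pair forces $\lambda z = \mu w$ to agree as an element $x\in W_\Lambda$, so that $x(0, d(\lambda)\vee d(\mu))\in\MCE(\lambda,\mu)$. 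I would argue, via the factorisation property, that the assignment sending $x$ to the pair $(\nu,\gamma)$ with $\lambda\nu = \mu\gamma = x(0, d(\lambda)\vee d(\mu))$ partitions the product bisection into the pieces $\{(\nu u, d(\nu)-d(\gamma), \gamma u) : u\in s(\nu)W_\Lambda\}$ indexed by $(\nu,\gamma)\in\Lambda^{\min}(\lambda,\mu)$, and that each such piece is precisely the bisection of $Q_\nu Q_{\gamma^*}$. Since the pieces are pairwise disjoint and $\Lambda$ is row-finite, summing the corresponding indicators yields (CP3), and Theorem \ref{universal-CP-family} supplies the homomorphism $\phi_Q$.

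For surjectivity I would use that $A_R(\mathcal{TG}_\Lambda)$ is spanned by the indicators $1_{TZ_\Lambda(\lambda\ast_s\mu\backslash G)}$ of the basic compact open bisections of Example \ref{groupouid-TGlambda}. Taking $\lambda,\mu$ with $s(\lambda)=s(\mu)$ and a finite $G\subseteq s(\lambda)\Lambda$, I would compute
\begin{equation*}
\phi_Q\Big(t_\lambda \prod_{\nu\in G}\big(t_{s(\lambda)} - t_\nu t_{\nu^*}\big)\, t_{\mu^*}\Big) = Q_\lambda \Big(\prod_{\nu\in G}\big(Q_{s(\lambda)} - Q_\nu Q_{\nu^*}\big)\Big) Q_{\mu^*} = 1_{TZ_\Lambda(\lambda\ast_s\mu\backslash G)},
\end{equation*}
where the middle product is the unit-space indicator of $TZ_\Lambda(s(\lambda))\setminus\bigcup_{\nu\in G}TZ_\Lambda(\nu)$, and sandwiching by $Q_\lambda$ on the left and $Q_{\mu^*}$ on the right removes exactly the excised pieces $TZ_\Lambda(\lambda\nu\ast_s\mu\nu)$. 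Hence every basis element lies in the image, so $\phi_Q$ is onto.

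Finally, for injectivity I would verify the hypotheses of Theorem \ref{the-uniqueness-theorem-of-CP-family}. Since $\phi_Q(rt_v)=r\,1_{TZ_\Lambda(v\ast_s v)}$ takes the value $r\neq 0$ at the unit $(v,0,v)$, it is nonzero; and $\phi_Q\big(r\prod_{e\in v\Lambda^1}(t_v - t_e t_{e^*})\big) = r\,1_{TZ_\Lambda(v\backslash v\Lambda^1)}$ is nonzero because the vertex $v\in W_\Lambda$, viewed as the degree-$0$ path, lies in $TZ_\Lambda(v)$ but in no $TZ_\Lambda(e)$ with $e\in v\Lambda^1$. Thus $\phi_Q(rt_v)\neq 0$ and $\phi_Q\big(r\prod_{e\in v\Lambda^1}(t_v - t_e t_{e^*})\big)\neq 0$ for all $r\in R\setminus\{0\}$ and $v\in\Lambda^0$, so Theorem \ref{the-uniqueness-theorem-of-CP-family} gives that $\phi_Q$ is injective, and therefore an isomorphism.
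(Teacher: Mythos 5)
Your proposal is correct and follows essentially the same route as the paper: realise the indicator functions as a Cohn $\Lambda$-family, obtain $\phi_Q$ from Theorem \ref{universal-CP-family}, get injectivity from Theorem \ref{the-uniqueness-theorem-of-CP-family} using exactly the witnesses $r1_{TZ_\Lambda(v\ast_s v)}$ and $r1_{TZ_\Lambda(v\ast_s v\backslash v\Lambda^1)}$, and get surjectivity from the identity $1_{TZ_\Lambda(\lambda\ast_s\mu\backslash G)}=Q_\lambda\big(\prod_{\nu\in G}(Q_{s(\lambda)}-Q_\nu Q_{\nu^*})\big)Q_{\mu^*}$. The only substantive differences are that you verify (CP1)--(CP3) by direct computation with the bisections (your (CP3) partition of the product bisection by the unique $(\nu,\gamma)\in\Lambda^{\min}(\lambda,\mu)$ attached to each composable pair is sound), where the paper instead cites \cite[Theorem 6.9]{FMY05} and \cite[Example 7.1]{Y07} for the Toeplitz--Cuntz--Krieger relations; and that you assert, rather than prove, that $A_R(\mathcal{TG}_\Lambda)$ is spanned by the indicators of the basic bisections $TZ_\Lambda(\lambda\ast_s\mu\backslash G)$ --- this is true but requires justification (the paper routes it through \cite[Lemma 2.2]{CE-M15} together with Lemma \ref{compact-open-bisection-U-is-in-span}, which handles finite unions of basic bisections), so you should cite or supply that reduction rather than take it for granted.
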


Before proving Proposition \ref{CP-is-isomorphic-to-Steinberg-algebras}, we
first note that the argument of \cite[Lemma 5.6]{CP15} also applies to the
path groupoid $\mathcal{TG}_{\Lambda }$\ and we get the following result.

\begin{lemma}
\label{compact-open-bisection-U-is-in-span}Let $\left\{ TZ_{\Lambda }\left(
\left. \lambda _{i}\ast _{s}\mu _{i}\right\backslash G_{i}\right) \right\}
_{i=1}^{n}$ be a finite collection of compact open bisection sets and%
\begin{equation*}
U:=\bigcup_{i=1}^{n}TZ_{\Lambda }\left( \left. \lambda _{i}\ast _{s}\mu
_{i}\right\backslash G_{i}\right) \text{.}
\end{equation*}%
Then%
\begin{equation*}
1_{U}\in \operatorname{span}_{R}\left\{ 1_{TZ_{\Lambda }\left( \left. \lambda \ast
_{s}\mu \right\backslash G\right) }:\left( \lambda ,\mu \right) \in \Lambda
\ast _{s}\Lambda ,G\subseteq s\left( \lambda \right) \Lambda \right\} \text{.%
}
\end{equation*}
\end{lemma}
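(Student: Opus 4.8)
The plan is to follow the inclusion--exclusion argument of \cite[Lemma~5.6]{CP15}, reducing the entire statement to a single computation: the intersection of two basic open bisections. The key observation is that indicator functions of bisections satisfy $1_{A}1_{B}=1_{A\cap B}$ pointwise and $1_{A\cup B}=1_{A}+1_{B}-1_{A\cap B}$, so once I know that the family $\left\{ TZ_{\Lambda}\left( \left. \lambda\ast_{s}\mu\right\backslash G\right) \right\}$ is closed under finite intersection up to finite \emph{disjoint} unions, the result follows formally.

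First I would record the reduction. Since
\begin{equation*}
1_{U}=\sum_{\emptyset\neq S\subseteq\{1,\dots,n\}}(-1)^{\left\vert S\right\vert +1}\,1_{\bigcap_{i\in S}TZ_{\Lambda}\left( \left. \lambda_{i}\ast_{s}\mu_{i}\right\backslash G_{i}\right) },
\end{equation*}
it suffices to prove that any finite intersection of basic sets $TZ_{\Lambda}\left( \left. \lambda\ast_{s}\mu\right\backslash G\right)$ is a finite disjoint union of basic sets; indicators of disjoint unions add, so every summand above then lands in the asserted span. By associativity it is enough to treat the intersection of two basic sets.

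Next I would establish the key claim for full cylinders, that is, for $G=H=\emptyset$. A triple $(x,m,y)$ lies in $TZ_{\Lambda}\left( \lambda\ast_{s}\mu\right) \cap TZ_{\Lambda}\left( \alpha\ast_{s}\beta\right)$ exactly when $m=d(\lambda)-d(\mu)=d(\alpha)-d(\beta)$, with $x\in\lambda W_{\Lambda}\cap\alpha W_{\Lambda}$, $y\in\mu W_{\Lambda}\cap\beta W_{\Lambda}$, and $\sigma^{d(\lambda)}x=\sigma^{d(\mu)}y$. Using
\begin{equation*}
\lambda W_{\Lambda}\cap\alpha W_{\Lambda}=\bigsqcup_{\tau\in\MCE\left( \lambda,\alpha\right) }\tau W_{\Lambda}
\end{equation*}
and writing each $\tau=\lambda\tau^{\prime}$, I would show that the intersection is the disjoint union of the cylinder bisections $TZ_{\Lambda}\left( \tau\ast_{s}\mu\tau^{\prime}\right)$ over those $\tau\in\MCE\left( \lambda,\alpha\right)$ for which $\mu\tau^{\prime}\in\MCE\left( \mu,\beta\right)$. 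Here the degree bookkeeping is automatic, since $d(\tau)-d(\mu\tau^{\prime})=\bigl(d(\lambda)+d(\tau^{\prime})\bigr)-\bigl(d(\mu)+d(\tau^{\prime})\bigr)=d(\lambda)-d(\mu)$, so the middle coordinate is consistent on each piece. This is the same combinatorics that produces (CP3) and Proposition \ref{properties-of-CP}(a), and since $\Lambda$ is row-finite the index set $\MCE\left( \lambda,\alpha\right)$ is finite, so the union is finite.

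Finally I would incorporate the removed sets. Writing $TZ_{\Lambda}\left( \left. \lambda\ast_{s}\mu\right\backslash G\right) =TZ_{\Lambda}\left( \lambda\ast_{s}\mu\right) \setminus\bigcup_{\nu\in G}TZ_{\Lambda}\left( \lambda\nu\ast_{s}\mu\nu\right)$ and distributing the intersection over these differences, each intersection $TZ_{\Lambda}\left( \tau\ast_{s}\mu\tau^{\prime}\right) \cap TZ_{\Lambda}\left( \lambda\nu\ast_{s}\mu\nu\right)$ is again a finite disjoint union of sub-cylinders of $TZ_{\Lambda}\left( \tau\ast_{s}\mu\tau^{\prime}\right)$ by the same MCE computation; hence $TZ_{\Lambda}\left( \tau\ast_{s}\mu\tau^{\prime}\right)$ with these pieces removed is a basic set $TZ_{\Lambda}\left( \left. \tau\ast_{s}\mu\tau^{\prime}\right\backslash G^{\prime}\right)$ for a suitable finite $G^{\prime}$. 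Collecting terms exhibits the intersection of two basic sets as a finite disjoint union of basic sets, completing the reduction. I expect the main obstacle to be the cylinder intersection itself: verifying the compatibility condition $\mu\tau^{\prime}\in\MCE\left( \mu,\beta\right)$, checking that the tail conditions of the two factors are simultaneously satisfiable precisely on the listed pieces, and confirming that each resulting piece is genuinely of the form $TZ_{\Lambda}\left( \tau\ast_{s}\rho\right)$ rather than a more complicated set.
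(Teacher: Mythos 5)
Your overall strategy --- inclusion--exclusion over the union, reducing everything to the claim that a finite intersection of basic sets $TZ_{\Lambda }\left( \left. \lambda \ast _{s}\mu \right\backslash G\right) $ is a finite disjoint union of basic sets --- is the standard route, and it is essentially what the paper invokes when it asserts that the argument of \cite[Lemma 5.6]{CP15} carries over to $\mathcal{TG}_{\Lambda }$. The reduction itself, and your treatment of the removed sets $G$ at the end, are fine.

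However, your description of the intersection of two full cylinders is wrong as stated, and this is precisely the step you flagged as delicate. You claim that $TZ_{\Lambda }\left( \lambda \ast _{s}\mu \right) \cap TZ_{\Lambda }\left( \alpha \ast _{s}\beta \right) $ is the disjoint union of the sets $TZ_{\Lambda }\left( \tau \ast _{s}\mu \tau ^{\prime }\right) $ over those $\tau =\lambda \tau ^{\prime }=\alpha \tau ^{\prime \prime }\in \operatorname{MCE}\left( \lambda ,\alpha \right) $ with $\mu \tau ^{\prime }\in \operatorname{MCE}\left( \mu ,\beta \right) $. That condition is necessary but not sufficient: one also needs the two factorisations to match, i.e.\ $\mu \tau ^{\prime }=\beta \tau ^{\prime \prime }$ for the \emph{same} $\tau ^{\prime \prime }$ arising from $\tau =\alpha \tau ^{\prime \prime }$. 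Concretely, take the $1$-graph with one vertex $v$ and two distinct loops $e\neq f$, and put $\lambda =e$, $\mu =f$, $\alpha =\beta =v$. Then $TZ_{\Lambda }\left( e\ast _{s}f\right) =\left\{ \left( ez,0,fz\right) :z\in vW_{\Lambda }\right\} $ while $TZ_{\Lambda }\left( v\ast _{s}v\right) $ is the diagonal over $vW_{\Lambda }$, so the intersection is empty; but $\operatorname{MCE}\left( e,v\right) =\left\{ e\right\} $ gives $\tau =e$, $\tau ^{\prime }=v$, and $\mu \tau ^{\prime }=f\in \operatorname{MCE}\left( f,v\right) $, so your formula returns the nonempty set $TZ_{\Lambda }\left( e\ast _{s}f\right) $. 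Here $\tau ^{\prime \prime }=e$ and $\beta \tau ^{\prime \prime }=e\neq f=\mu \tau ^{\prime }$, which is exactly what rules the piece out. The fix is to index the union by pairs $\left( \tau ^{\prime },\tau ^{\prime \prime }\right) \in \Lambda ^{\min }\left( \lambda ,\alpha \right) \cap \Lambda ^{\min }\left( \mu ,\beta \right) $; the degree bookkeeping you already did shows that when $d\left( \lambda \right) -d\left( \mu \right) =d\left( \alpha \right) -d\left( \beta \right) $ these two minimal-extension sets live in the same degree, so this intersection makes sense, and with that correction both containments go through and the rest of your argument is sound.
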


\begin{proof}[Proof of Proposition \protect\ref%
{CP-is-isomorphic-to-Steinberg-algebras}]
By \cite[Theorem 6.9]{FMY05} and \cite[Example 7.1]{Y07}, $Q_{\lambda
}:=1_{TZ_{\Lambda }\left( \lambda \ast _{s}s\left( \lambda \right) \right) }$
determines a Toeplitz-Cuntz-Krieger $\Lambda $-family. Now define $%
Q_{\lambda }:=1_{TZ_{\Lambda }\left( \lambda \ast _{s}s\left( \lambda
\right) \right) }$ and $Q_{\mu ^{\ast }}:=1_{TZ_{\Lambda }\left( s\left( \mu
\right) \ast _{s}\mu \right) }$ for $\lambda ,\mu \in \Lambda $. Then $%
\left\{ Q_{\lambda },Q_{\mu ^{\ast }}:\lambda ,u\in \Lambda \right\} $ is a
Cohn $\Lambda $-family in $A\left( \mathcal{TG}_{\Lambda }\right) $. Hence
there exists a homomorphism $\phi _{Q}:{\normalsize \operatorname{C}}_{R}\left(
\Lambda \right) \rightarrow A_{R}\left( \mathcal{TG}_{\Lambda }\right) $
such that $\phi _{Q}\left( t_{\lambda }\right) =Q_{\lambda }$ and $\phi
_{Q}\left( t_{\mu ^{\ast }}\right) =Q_{\mu ^{\ast }}$ for $\lambda ,\mu \in
\Lambda $.

Now we show that $\phi _{Q}$ is injective. Note that for all $r\in \left.
R\right\backslash \left\{ 0\right\} $ and $v\in \Lambda ^{0}$, we have%
\begin{equation*}
\phi _{Q}\left( rt_{v}\right) =rQ_{v}=r1_{TZ_{\Lambda }\left( v\ast
_{s}v\right) }\neq 0
\end{equation*}%
and%
\begin{align*}
\phi _{Q}\Big(r\prod_{e\in v\Lambda ^{1}}\left( t_{v}-t_{e}t_{e^{\ast
}}\right) \Big)& =r\prod_{e\in v\Lambda ^{1}}\left( Q_{v}-Q_{e}Q_{e^{\ast
}}\right) =r\prod_{e\in v\Lambda ^{1}}\left( 1_{TZ_{\Lambda }\left( v\ast
_{s}v\right) }-1_{TZ_{\Lambda }\left( e\ast _{s}e\right) }\right) \\
& =r\prod_{e\in v\Lambda ^{1}}1_{TZ_{\Lambda }\left( \left. v\ast
_{s}v\right\backslash \left\{ e\right\} \right) }=r1_{\prod_{e\in v\Lambda
^{1}}TZ_{\Lambda }\left( \left. v\ast _{s}v\right\backslash \left\{
e\right\} \right) } \\
& =r1_{TZ_{\Lambda }(\left. v\ast _{s}v\right\backslash v\Lambda ^{1})}\neq 0%
\text{.}
\end{align*}%
Hence, by Theorem \ref{the-uniqueness-theorem-of-CP-family}, $\phi _{Q}$ is
injective, as required.

Finally we show the surjectivity of $\phi _{Q}$. Take $f\in A_{R}\left(
\mathcal{TG}_{\Lambda }\right) $. By \cite[Lemma 2.2]{CE-M15}, $f$ can be
written as $\sum_{U\in F}a_{U}1_{U}$ where $a_{U}\in R$, each $U$ is in the
form $\bigcup_{i=1}^{n}TZ_{\Lambda }\left( \left. \lambda _{i}\ast _{s}\mu
_{i}\right\backslash G_{i}\right) $ for some $n\in \mathbb{N}$, and $F$ is
finite set of mutually disjoint elements. Hence to show $f\in \operatorname{im}%
\left( \phi _{Q}\right) $, it suffices to show
\begin{equation*}
1_{U}\in \operatorname{im}\left( \phi _{Q}\right)
\end{equation*}%
where $U:=\bigcup_{i=1}^{n}TZ_{\Lambda }\left( \left. \lambda _{i}\ast
_{s}\mu _{i}\right\backslash G_{i}\right) $ for some $n\in \mathbb{N}$ and
collection $\left\{ TZ_{\Lambda }\left( \left. \lambda _{i}\ast _{s}\mu
_{i}\right\backslash G_{i}\right) \right\} _{i=1}^{n}$. By Lemma \ref%
{compact-open-bisection-U-is-in-span}, $1_{U}$ can be written as the sum of
elements in the form $1_{TZ_{\Lambda }\left( \left. \lambda \ast _{s}\mu
\right\backslash G\right) }$. On the other hand, by following the argument
of \cite[Equation 5.5]{CP15}, we have
\begin{equation*}
1_{TZ_{\Lambda }\left( \left. \lambda \ast _{s}\mu \right\backslash G\right)
}=Q_{\lambda }\big(\prod_{\nu \in G}\left( Q_{s\left( \lambda \right)
}-Q_{\nu }Q_{\nu ^{\ast }}\right) \big)Q_{\mu }
\end{equation*}%
for all $\left( \lambda ,\mu \right) \in \Lambda \ast _{s}\Lambda $ and
finite $G\subseteq s\left( \lambda \right) \Lambda $. Hence, every $%
1_{TZ_{\Lambda }\left( \left. \lambda \ast _{s}\mu \right\backslash G\right)
}$ belongs to $\operatorname{im}\left( \phi _{Q}\right) $ and so does $1_{U}$, as
required. Hence $\phi _{Q}$ is surjective and then is an isomorphism.
\end{proof}

\begin{remark}
Proposition 5.5 of \cite{CP15} shows that the Kumjian-Pask algebra of $%
T\Lambda $ is isomorphic to the Steinberg algebra associated to the
boundary-path groupoid $\mathcal{G}_{T\Lambda }$ of \cite{Y07}. Indeed, we
could have shown that the path groupoid $\mathcal{TG}_{\Lambda }$ of Example %
\ref{groupouid-TGlambda} is topologically isomorphic to the boundary-path
groupoid $\mathcal{G}_{T\Lambda }$, and deduced Proposition \ref%
{CP-is-isomorphic-to-Steinberg-algebras}. However, the direct argument above
takes about the same amount of effort.
\end{remark}

\end{document}